\tikzset{
>=stealth',
}
\newcounter{theorems}
\newtheorem{prop}[theorems]{Proposition}
\newtheorem{thm}[theorems]{Theorem}
\newtheorem{cor}[theorems]{Corollary}
\newtheorem{mytheorem}{Theorem}
\newtheorem{lem}[mytheorem]{Lemma}
\newtheorem*{prob}{Problem}
\newtheorem*{repthm1}{Theorem \ref{thm:NI}}
\newtheorem*{repthm2}{Theorem \ref{thm:instability}}
\theoremstyle{definition}
\newtheorem{defin}[theorems]{Definition}
\theoremstyle{remark}
\newtheorem{rem}[theorems]{Remark}
\newtheorem*{rem*}{Remark}
\def\blfootnote{\gdef\@thefnmark{}\@footnotetext}
\def\a {\alpha}
\def\s {\sigma }
\def\e {\varepsilon }
\def\be {\begin{equation}}
\def\ee {\end{equation}}
\def\F {\mathcal F}
\def\A {\mathcal A}
\def\B {\mathcal B}
\def\rr {\mathbb R}
\def\zz {\mathbb Z}
\def\nn {\mathbb N}
\def\sign{\operatorname{sgn}}
\def\Diff{\operatorname{Diff}}
\DeclareMathOperator{\dist}{dist}
\def\per{\operatorname{per}}
\title{Locally topologically generic diffeomorphisms with Lyapunov unstable Milnor attractors}
\author{Ivan Shilin\thanks {Moscow State University. This research was supported in part by the Simons Foundation and by the RFBR grant 16-01-00748 a.}}
\date{}
\begin{document}

\maketitle

\begin{abstract}
We prove that for every smooth compact manifold $M$ and any $r\ge1$, whenever there is an open domain in $\Diff^r(M)$ exhibiting a persistent homoclinic tangency related to a basic set with a sectionally dissipative periodic saddle, topologically generic diffeomorphisms in this domain have Lyapunov unstable Milnor attractors. This implies, in particular, that the instability of Milnor attractors is locally topologically generic in $C^1$ if ${\rm dim}\,M\ge~3$ and in $C^2$ if ${\rm dim}\,M = 2.$ Moreover, it follows from the results of C.~Bonatti, L.~J.~D{\'i}az and E.~R.~Pujals that, for a $C^1$ topologically generic diffeomorphism of a closed manifold, either any homoclinic class admits some dominated splitting, or this diffeomorphism  has an unstable Milnor attractor, or the inverse diffeomorphism has an unstable Milnor attractor. The same results hold for statistical and minimal attractors.
\end{abstract}

\blfootnote{\textit{Mathematics Subject Classification (2010).} Primary: 37B25. Secondary: 37B20, 37C20, 37C29, 37D30.}

\blfootnote{\textit{Keywords.} Milnor attractor, Lyapunov stability, generic dynamics.}

\bigskip
\section{Introduction}

There exist several nonequivalent definitions of attractors of dynamical systems, in particular, of diffeomorphisms. The general idea is that an attractor is an invariant subset to which most points evolve under the iterates of the system. 

First definitions which appeared in the 1960s required the attractor to coincide with the intersection of the forward images of its dissipative neighborhood. An attracting fixed point --- the simplest example --- and a nontrivial hyperbolic attractor of the Smale-Williams solenoid map perfectly fit these definitions. Here is the simplest example of such definition. 

\begin{defin}[Maximal attractor] Suppose a homeomorphism $F$ has a dissipative domain $U,$ i.e., $\overline{F(U)}\subset U$. Then \emph{a maximal attractor in $U$} is the intersection of the images of this domain under the positive iterates of the system: $A_{max}=\bigcap_{n\in\nn}F^n(U).$ 
\end{defin}

Attractors thus defined are also called \emph{topological} attractors or \emph{trapped} attractors. In this case $U$ is called \emph{a trapping neighborhood}.

\begin{defin} 
An invariant set $K$ of a homeomorphism $F$ is \emph{Lyapunov stable} provided that for any neighborhood $U$ of $K$ there exists another neighborhood $V$ of $K$ such that any future orbit of $F$ starting at $V$ never quits $U$.
\end{defin}

It is easy to see that maximal attractors are always Lyapunov stable.
However, there are very simple dynamical systems for which this definition of attractor is inconvenient when we want to describe a global attracting set. Moreover, C.~Bonatti, M.~Li and D.~Yang proved in \cite{BLY} that if one adds to the definition of attractor the requirement that the attractor must be a chain-transitive set, then there would be $C^1$ locally topologically generic diffeomorphisms with no attractors thus defined.

On the contrary, the following attractors are always defined, at least for diffeomorphisms of compact Riemannian manifolds. 

\begin{defin} [Milnor attractor\footnote{J. Milnor originally called it \emph{the likely limit set}.}, \cite{Milnor}] For a homeomorphism $F$ of a metric measure space, the \emph{Milnor attractor} is the smallest closed set that contains $\omega $-limit sets of almost all orbits. 
\end{defin}
We will denote the Milnor attractor by $A_M$ or $A_M(F).$ In what follows we will always assume that $M$ is a smooth closed Riemannian manifold of dimension at least two with a measure induced by the Riemannian metric.

A simplest example of a Lyapunov unstable Milnor attractor is provided by a diffeomorphism of a circle with a single semistable fixed point, for instance:
$$
x \mapsto x + 0.1 (1-\cos x).
$$
In this example the phase space has no nonempty proper dissipative domains, so we have to either say it has no topological attractor or say that the whole circle is the attractor. Either statement gives little information about the asymptotic behavior of orbits.
The point 0 is the Milnor attractor here, but it is not Lyapunov stable.

The question whether Milnor attractors can be unstable for an open set of diffeomorphisms is still open.
The following result implies though that the Lyapunov stability of attractors is not a topologically generic property and, in fact, instability is rather abundant.

\begin{mytheorem}\label{thm:NI}
Suppose that in a dense subset of an open set $U\subset{\rm Diff}^r(M),\; r\ge 1,$ diffeomorphisms exhibit a homoclinic tangency\footnote{Recall that a homoclinic tangency associated with the periodic saddle $p$ is simply a non-transverse intersection of $W^s(p_1)$ and $W^u(p_2)$, where $p_1, p_2$ belong to the orbit of $p$.} associated with a sectionally dissipative\footnote{The definition of a sectionally dissipative saddle is given in Section~\ref{sec:preliminaries} below (Def.~\ref{def:diss}).} periodic saddle $p$ that continuously depends on the map in $U$. Then a topologically generic diffeomorphism in $U$ has a Lyapunov unstable Milnor attractor. 
\end{mytheorem}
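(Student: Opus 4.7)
The plan is to exhibit a dense $G_\delta$ subset of $U$ of diffeomorphisms whose Milnor attractors are Lyapunov unstable, with an escape scale $\e_0>0$ fixed once and for all from $U$. The argument has three ingredients: producing sinks accumulating on $\operatorname{orb}(p(f))$, producing a point on $W^u(p(f))$ at definite distance from $A_M(f)$, and joining them by the $\lambda$-lemma.

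\textit{Sinks accumulating on the saddle.} By hypothesis, tangencies at a sectionally dissipative saddle $p(f)$ depending continuously on $f$ are dense in $U$. A Newhouse--Palis--Takens unfolding of such a tangency produces, by an arbitrarily small $C^r$ perturbation, a periodic orbit arbitrarily close to $\operatorname{orb}(p)$, and sectional dissipativity (the product of any two eigenvalues of $Df$ at $p$ has modulus $<1$) forces all Floquet multipliers of the newborn orbit to lie in the open unit disk, making it a sink. Combined with hyperbolic persistence, for every $n\in\nn$ the set $\mathcal{O}_n \subset U$ of diffeomorphisms possessing a periodic sink within distance $1/n$ of $\operatorname{orb}(p(f))$ is open and dense in $U$, so $\mathcal{R}_0 := \bigcap_n \mathcal{O}_n$ is a dense $G_\delta$. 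Each sink lies in $A_M(f)$ since its basin has positive Lebesgue measure; as $A_M(f)$ is closed, $\operatorname{orb}(p(f)) \subset A_M(f)$ for every $f\in\mathcal R_0$.

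\textit{Escape target and $\lambda$-lemma.} Sectional dissipativity forces $\dim W^u(p)=1$, so $W^u(p(f))$ is an immersed curve varying continuously with $f$. I show that on a dense $G_\delta$ set $\mathcal R_1 \subset U$ there is a point $y(f)\in W^u(p(f))$ with $\dist(y(f), A_M(f))\ge \e_0$ for some $\e_0>0$ independent of $f$; a suitable fundamental-domain selection together with upper semicontinuity of $A_M$ on an open refinement of $U$ yields the uniform bound. On $\mathcal R := \mathcal R_0 \cap \mathcal R_1$ instability then follows from the Palis $\lambda$-lemma: given $\delta>0$, take a small transversal to $W^s(p(f))$ within $\delta$ of $\operatorname{orb}(p(f))\subset A_M(f)$; its $f^N$-image contains an arc $C^1$-close to a fundamental domain of $W^u(p(f))$ passing through $y(f)$, so some $x$ with $\dist(x,A_M(f))<\delta$ satisfies $\dist(f^N(x),A_M(f)) \ge \e_0/2$, witnessing Lyapunov instability at scale $\e_0/2$.

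\textit{Main obstacle.} The hard step is producing the target $y(f)$ with a uniform lower bound $\e_0$ on $\dist(y(f), A_M(f))$ for a residual set of $f$. A priori, $A_M(f)$ could contain $\overline{W^u(p(f))}$ entirely, and then no escape along $W^u(p)$ is available. To prevent this generically, one exploits the fact that, inside a Newhouse domain, $A_M(f)$ is essentially the closure of the union of sink orbits together with the homoclinic class $H(p(f))$ -- a thin set -- whereas the smooth curve $W^u(p(f))$ carries transient arcs not swallowed by that closure. Turning this dichotomy into a uniform separation across a dense $G_\delta$, as opposed to for a single reference diffeomorphism, is the delicate point; once $\e_0$ is fixed, the remaining machinery is classical bifurcation theory together with the $\lambda$-lemma.
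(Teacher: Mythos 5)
Your first ingredient (sinks accumulating on $p(f)$ via the Newhouse--Palis--Takens unfolding, giving a residual $\mathcal R_0$) matches the paper's argument. The gap is in the second ingredient, and it is not merely a technical loose end --- the route you sketch will not close.

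You are aiming to produce, on a residual set, a point $y(f)\in W^u(p(f))$ with $\dist(y(f),A_M(f))\ge\e_0$ for a constant $\e_0$ uniform over $U$, invoking ``upper semicontinuity of $A_M$'' plus a vague picture of $A_M(f)$ as the closure of sinks and of $H(p(f))$. Several things are wrong here. First, uniformity of $\e_0$ is not needed: Lyapunov instability of $A_M(f)$ is a property of each individual $f$, so an $f$-dependent separation suffices. Second, upper semicontinuity of the Milnor attractor is false in general and is not proved in this paper, so you cannot lean on it. Third, the structural claim about $A_M(f)$ inside a Newhouse domain is not established and would itself be a hard theorem.

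The paper sidesteps all of this by proving what it calls the \emph{capture lemma}: arbitrarily $C^r$-close to a map with a tangency at a sectionally dissipative $p$ there is a map $G$ such that $W^u(p(G))$ meets the attraction basin of some periodic sink. That set of diffeomorphisms is $C^r$-open (local invariant manifolds and basins persist) and $C^r$-dense, hence residual. Now the separation from $A_M$ comes for free, per diffeomorphism: a point $z$ in a sink's basin, other than the sink itself, is wandering, and $A_M$ is contained in the non-wandering set (if not, one could delete a neighborhood of a wandering point of $A_M$ and get a smaller closed full-measure-attracting set). So $z\notin A_M(f)$ and, $A_M(f)$ being closed, $\dist(z,A_M(f))>0$. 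Backward iterates of $z$ along $W^u(p)$ approach $p\in A_M(f)$ (because the sinks of $\mathcal R_0$ force $p\in A_M(f)$), which is exactly Lyapunov instability. No uniform constant, no $\lambda$-lemma at this stage, and no structure theorem for $A_M$ is required.

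The genuine content of the theorem is thus the capture lemma itself, which you have not addressed. The paper's proof of it is long: a renormalization of the return map near the unfolded tangency (following Palis--Takens and Palis--Viana), showing directly that the newborn sink's basin captures a piece of $W^u(p)$ when the saddle is \emph{extremely dissipative} ($\|\lambda\|\sigma^2<1$), and then a sequence of perturbation lemmas (lower versus upper tangencies, sign condition~(\ref{eq:pvcond}), passage to a heteroclinically related saddle produced by the renormalization) to reduce the general sectionally dissipative case to the extremely dissipative one. Unless you supply an argument of comparable substance for the escape step, the proposal does not prove the theorem.
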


Note that in the hypothesis of Theorem~\ref{thm:NI} we do not assume any continuous dependence of the orbits of tangency on the map.    

Domains satisfying the assumption of Theorem~\ref{thm:NI} were first constructed by S.~Newhouse in his series of works on persistent tangencies in $C^2$ \cite{N70,N74,N79}. He also presented an example of locally dense diffeomorphisms with homoclinic tangencies in $C^1$ in dimension at least three (see Section 8 of \cite{CIME}; this example was later rediscovered by Masayuki Asaoka in \cite{Masa}). Theorem~\ref{thm:NI} may be directly applied to these results, and such application yields, respectively, that on two-dimensional surfaces diffeomorphisms with Lyapunov unstable Milnor attractors are locally topologically generic in $C^2$, and on manifolds of dimension at least three they are locally topologically generic in $C^1$ (see Cor.~\ref{cor:generic2} and \ref{cor:generic3} below). However, as we discuss in Remark~\ref{rem:easy}, one does not actually need Theorem~\ref{thm:NI} to deduce the existence of locally generic diffeomorphisms with unstable attractors. Nevertheless, this theorem is \mbox{necessary} to show that locally generic diffeomorphisms with unstable attractors can be found $C^2$-close to \emph{any} $C^2$-diffeomorphism with a homoclinic tangency related to a sectionally dissipative saddle (Cor.~\ref{cor:approx}).   

C. Bonatti, L. J. D{\'i}az and E.~R.~Pujals stated in \cite{BDP} the following dichotomy for a $C^1$-generic diffeomorphism: for each periodic hyperbolic saddle its homoclinic class either admits a dominated splitting or is contained in the closure of an infinite set of sinks or sources. In Section~\ref{dichotomy} by a rather simple argument based on their work we deduce the following result.
\begin{mytheorem}\label{thm:instability}
Let $M$ be a closed manifold and $F\in\Diff^1(M)$ be a Baire-generic diffeomorphism  of $M$. Then
\begin{itemize}\setlength\itemsep{-0.1em}
\item{either any homoclinic class of $F$ admits some dominated splitting,}
\item{or the Milnor attractor is Lyapunov unstable for $F$ or $F^{-1}.$}
\end{itemize}
\end{mytheorem}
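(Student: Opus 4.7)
The plan is to combine the Bonatti--D{\'i}az--Pujals dichotomy of \cite{BDP} with Theorem~\ref{thm:NI}. Let $F \in \Diff^1(M)$ be Baire-generic. If every homoclinic class of $F$ admits a dominated splitting, the first alternative holds and there is nothing to show. Otherwise, some homoclinic class $H(p, F)$ of a hyperbolic periodic saddle $p$ does not admit a dominated splitting, and the goal is to deduce Lyapunov instability of $A_M(F)$ or of $A_M(F^{-1})$.

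The first step would be to appeal to \cite{BDP} to place $F$, after possibly replacing it by $F^{-1}$, inside a $C^1$-open set $U \subset \Diff^1(M)$ on which diffeomorphisms exhibiting a homoclinic tangency associated with a sectionally dissipative periodic saddle --- whose hyperbolic continuation varies continuously with the map in $U$ --- form a dense subset. The ingredients are the \cite{BDP} construction of periodic orbits in $H(p, F)$ of arbitrarily weak normal contraction, a $C^1$-small perturbation sharpening such an orbit into a sectionally dissipative saddle (the direction of dissipation selecting between $F$ and $F^{-1}$), and the creation of a homoclinic tangency at this saddle by the perturbative/connecting-lemma techniques of the same paper.

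Once such an open set $U$ is produced, Theorem~\ref{thm:NI} applies directly and yields a residual subset $\R_U \subset U$ of diffeomorphisms with Lyapunov unstable Milnor attractor. To upgrade this to a statement about a Baire-generic $F \in \Diff^1(M)$, I would enumerate countably many candidate open sets $\{U_i\}$ (indexed, for instance, by the combinatorial type of a periodic orbit of a given period together with its index), intersect the sets $\R_{U_i} \cup (\Diff^1(M) \setminus U_i)$, and intersect further with the residual set on which the \cite{BDP} dichotomy itself holds. The resulting residual $\mathcal{G} \subset \Diff^1(M)$ consists of diffeomorphisms for which the dichotomy stated in the theorem is verified: any $F \in \mathcal{G}$ with a non-dominated homoclinic class lies in some $U_i$ and hence in $\R_{U_i}$.

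The hard part will be the first step: extracting from the results of \cite{BDP} a genuine $C^1$-open set $U$ on which the sectionally dissipative continuation of a saddle together with an associated homoclinic tangency is \emph{dense}, rather than obtained at a single perturbation of $F$. This is where the ``rather simple argument based on their work'' really rests on the strength of the \cite{BDP} perturbative machinery and on the $C^1$-stability of the sectionally dissipative continuation. Once density and continuous dependence are in hand, Theorem~\ref{thm:NI} and the routine Baire argument sketched above finish the proof.
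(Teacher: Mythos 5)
Your proposal tries to reduce Theorem~\ref{thm:instability} to Theorem~\ref{thm:NI} by manufacturing, from \cite{BDP}, a $C^1$-open set in which homoclinic tangencies associated with a sectionally dissipative saddle are dense. This is not what the paper does, and the reduction has a genuine gap. The results of \cite{BDP} that the paper actually invokes (restated as Theorem~\ref{BDP_p1}) produce a periodic saddle $q\in H(p,F)$ whose differentials along $O(q)$ can be $\e$-perturbed into a contraction or dilation; combined with Franks' lemma this yields \emph{sinks or sources} near $p$. It does not yield a homoclinic tangency, let alone a tangency associated with a sectionally dissipative saddle whose continuation varies continuously on an open set. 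The step you flag as ``the hard part'' --- ``the creation of a homoclinic tangency at this saddle by the perturbative/connecting-lemma techniques of the same paper'' --- is not supplied by \cite{BDP}, and the paper does not attempt it. Moreover, even the Crovisier--Pujals--Sambarino result (Theorem~\ref{thm:CPS}) that would convert generic coexistence of infinitely many sinks into a dense set of tangencies gives tangencies for \emph{a priori different} sectionally dissipative points, not a persistent tangency for one saddle; the paper explicitly notes, just before Corollary~\ref{cor:dense_instability}, that this is insufficient for Theorem~\ref{thm:NI} and only yields \emph{density}, not local genericity, of instability.

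The paper's route is different and avoids tangencies entirely. Instead of passing through Theorem~\ref{thm:NI}, it proves a separate $C^1$ ``capture lemma'' (Lemma~\ref{lem:capture2}) tailored to the output of Theorem~\ref{BDP_p1}: by choosing a periodic saddle $Q$ in the homoclinic class that shadows $O(q)$ for many winds before closing up, and by perturbing the differentials only along the later winds, one turns $Q$ into a sink and at the same time traps a heteroclinic point $x\in W^u(O(p))$ in its basin. Both hypotheses of Proposition~\ref{prop:suff} (sinks accumulating on $p$, and $W^u(p)$ meeting a sink's basin) are then arranged on a residual set via the Newhouse-style Baire argument, giving the local version (Theorem~\ref{prop:localized}); the global version follows by the Kupka--Smale-type enumeration of saddles, just as you sketch. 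Your Baire bookkeeping at the end is fine, but to make your argument work you would need to either reprove something like Lemma~\ref{lem:capture2} or independently establish persistent tangencies for a single sectionally dissipative saddle from non-domination of a homoclinic class, and the latter is not available.
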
 
Note that here we discuss not local, but global topological genericity.

We should note that Theorem~\ref{thm:NI} and its corollaries are also true for diffeomorphisms of a compact manifold with boundary into its interior and even for local diffeomorphisms.\footnote{The definitions of Lyapunov stability and Minor attractor for local diffeomorphisms are the same.} Theorem~\ref{thm:instability} substantially requires the map to be invertible, and therefore this theorem can only be restated for diffeomorphisms of compact manifolds that preserve the boundary. 

Finally, we emphasize that these results on Milnor attractors are also true for any other definition of attractor if this definition implies that
\begin{itemize}\setlength\itemsep{-0.1em}
\item{the attractor exists for any map of a given class,}

\item{the attractor is a closed set,}

\item{the attractor lies in the non-wandering set,}

\item{any hyperbolic periodic sink lies in the attractor.}
\end{itemize}

For example, \emph{statistical} and \emph{minimal} attractors introduced respectively in \cite{AAIS} and \cite{GI} (see also \cite{Il91} and \cite{Il03}) and \emph{the generic limit set} from \cite{Milnor} possess these properties.

\bigskip
\textbf{Acknowledgements.} The author would like to thank Professor Yu.~S.~Ilyashenko for his guidance and S.~S.~Minkov and A.~V.~Okunev for encouraging discussions and many helpful comments.

\section{Preliminaries}\label{sec:preliminaries}
\begin{defin} A property of diffeomorphisms is called \emph{topologically generic} (or \emph{Baire-generic}) if the diffeomorphisms that possess this property form a residual subset\footnote{i.e., a subset that contains a countable intersection of open and dense subsets of the space of diffeomorphisms.} in the corresponding space of diffeomorphisms. A property is called \emph{locally topologically generic} if the diffeomorphisms with this property form a residual subset in some open domain in the space of diffeomorphisms. 
\end{defin}

When we speak about a $C^m$-generic diffeomorphism in $U\subset{\rm Diff}^r(M),\; m\ge r,$ we mean a diffeomorphism from a residual subset in $U\cap {\rm Diff}^m(M)$ (with respect to the subspace topology that comes from $\Diff^m(M)$).  Likewise, we will say that a subset is $C^m$-dense in $U$ if it is dense in $U\cap {\rm Diff}^m(M).$ In what follows, ``generic'' is always assumed to mean ``topologically generic''.

\begin{defin}\label{def:diss}
A hyperbolic periodic saddle $p$ of a diffeomorphism $F$  is called \emph{dissipative} if $|\det \,(dF^{{\rm per}(p)}(p))| < 1$, where $\per(p)$ is the period of $p$. If $|\det\, (dF^{{\rm per}(p)}(p))| > 1$, the saddle is called \emph{area-expanding}. 

The saddle $p$ is called \emph{sectionally dissipative} if it has a unique expanding eigenvalue $\lambda_1: |\lambda_1|>1,$ and for any two eigenvalues $\lambda_i, \lambda_j$ ($i\ne j$) one has $|\lambda_i\cdot\lambda_j|<1.$  
\end{defin}

\begin{rem}\label{rem:volume}
The saddle $p$ is sectionally dissipative iff in some coordinates $dF^{\per(p)}$ contracts two-dimensional euclidean volumes, i.e., its restriction to every (not necessarily invariant) two-dimensional plane is volume contracting (hence the name `sectionally dissipative').
\end{rem}

\begin{defin}
An $F$-invariant set $\Lambda$ is called \emph{locally maximal} if it coincides with the intersection of images of some its neighborhood $U$ under positive and negative iterates of the map: $\Lambda = \bigcap_{n\in\zz}F^n(U(\Lambda)).$ A topologically transitive locally maximal (closed) hyperbolic invariant set is called \emph {a basic set}.  
\end{defin}

Basic sets survive small perturbations of the diffeomorphism, and we will often denote the hyperbolic continuation of a basic set by the same symbol that we use for the original basic set. Periodic points are always dense in basic sets.

\begin{rem}\label{rem:fiber_density}
 Suppose $\Lambda$ is a basic set of saddle type and $p\in\Lambda$ is a periodic saddle. Denote by $O(p)$ the orbit of $p$. Then $W^u(O(p))$ is dense in $W^u(\Lambda).$ Indeed, consider a point $x\in\Lambda$ with a dense forward orbit\footnote{Our set $\Lambda$ is a complete separable metric space, and in this case topological transitivity implies the existence of points with dense forward and backward orbits.}. We may assume that $x$ is close to $p$. Then $W^u(p)\pitchfork W^s(x)\ne\emptyset$ because of the local product structure. But for any point $y$ in this intersection $\dist(F^n(y), F^n(x))\to 0$ as $n\to\infty$. Then the density of the forward orbit of $x$ implies that $W^u_{loc}(\Lambda) \subset \overline{W^u(O(p))}$, and hence $W^u(\Lambda) \subset \overline{W^u(O(p))}$. An analogous statement is true for $W^s(O(p))$ and $W^s(\Lambda)$. 
\end{rem}

Two periodic saddles $p$ and $q$ are called \emph{heteroclinically related} if $W^u(O(p))\pitchfork W^s(O(q))\ne\emptyset$ and $W^s(O(p))\pitchfork W^u(O(q))\ne\emptyset.$ It follows from the $\lambda$-lemma (also known as the inclination lemma) that this is an equivalence relation. The closure of the set of all saddles heteroclinically related to $p$ is called \emph{the homoclinic class} of $p.$ We will denote the homoclinic class of a saddle $p$ by $H(p, F)$, where $F$ stands for the mapping. Whenever we use this notation, we assume that $p$ is a hyperbolic saddle for $F$.

\begin{defin}
We will say that there is a homoclinic tangency associated with a hyperbolic periodic saddle $p$ if $W^u(O(p))$ and $W^s(O(p))$ have a point (and therefore an orbit) of non-transverse intersection.
\end{defin} 

In what follows in dimension greater than two we will consider only homoclinic tangencies that appear for sectionally dissipative saddles.

\begin{defin}\label{def:per_tan}
Suppose that for each diffeomorphism in an open domain $U\subset {\rm Diff}^r(M)$ there is a hyperbolic basic set $\Lambda(F)$ of saddle type that depends continuously on the diffeomorphism and for each $F \in U$ there are two points  $p_1, p_2\in\Lambda (F)$ such that $W^s(p_1)$ has a point of tangency with $W^u(p_2).$ Then we say that there is \emph{a $C^r$-persistent tangency} in $U$ \emph{for a hyperbolic set} $\Lambda(F)$, or that $\Lambda(F)$ \emph{exhibits a $C^r$-persistent tangency}. 
\end{defin}

When we have a diffeomorphism with a basic set such that there is a persistent tangency in the neighborhood of this diffeomorphism associated with the continuation of this basic set, we will informally say that this diffeomorphism and this basic set exhibit a persistent tangency.

The detailed proof of the following proposition can be found in \cite{CIME} (see Lemma 8.4).\footnote{Though the primary focus is on the two-dimensional case there, the proof itself is valid in any dimension.}
\begin{prop}\label{prop:dense}
Suppose that $\Lambda$ is a hyperbolic basic set for a diffeomorphism $F\in \Diff^r(M)$, $p\in\Lambda$ is a periodic saddle, and there are two points $p_1, p_2\in\Lambda$ such that $W^u(p_1)$ and $W^s(p_2)$ have an orbit of tangency. Then a homoclinic tangency associated with (the continuation of) $p$ can be obtained by a $C^r$-small perturbation of $F.$  
\end{prop}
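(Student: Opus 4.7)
The plan is three-fold: (i) use the transitivity of $\Lambda$ to relate $p$ heteroclinically to $p_1$ and $p_2$; (ii) apply the $\lambda$-lemma to find pieces of $W^u(O(p))$ and $W^s(O(p))$ that $C^r$-approximate $W^u(p_1)$ and $W^s(p_2)$ near the tangency point $q$; (iii) perform a localized $C^r$-small perturbation of $F$ near $q$ to convert this approximate tangency into a genuine homoclinic tangency for the continuation of $p$.

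For (i), since $\Lambda$ is transitive with dense periodic points, the local product structure gives that any two periodic orbits of $\Lambda$ are heteroclinically related; pick periodic $y_1, y_2 \in \Lambda$ close to $p_1, p_2$ respectively, together with transverse intersection points $a \in W^u(p) \pitchfork W^s(y_1)$ and $b \in W^s(p) \pitchfork W^u(y_2)$. For (ii), take a small disk $D_a \subset W^u(p)$ through $a$; by the $\lambda$-lemma, the forward iterates $F^n(D_a)$ $C^r$-accumulate onto compact pieces of $W^u(y_1)$, and combining this with Remark~\ref{rem:fiber_density} (which implies $W^u(y_1)$ $C^r$-approximates pieces of $W^u(p_1)$), we may arrange that $\Delta^u := F^n(D_a) \subset W^u(O(p))$ is $C^r$-close to a fixed small disk $D^u \subset W^u(p_1)$ containing $q$. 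A symmetric backward construction through $b$ produces $\Delta^s \subset W^s(O(p))$ that is $C^r$-close to a fixed small disk $D^s \subset W^s(p_2)$ containing $q$.

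For (iii), first note that $q \notin \Lambda$: hyperbolicity of $\Lambda$ forces the splitting $E^u \oplus E^s$ to be transverse on $\Lambda$, whereas $q$ is a tangency, so a small open neighborhood $V$ of $q$ is disjoint from $\Lambda$. Since the orbit of $q$ is asymptotic to $\Lambda$ in both directions, only finitely many iterates of $V$ meet the finite orbit arcs used in (ii), so after shrinking $V$ these iterates miss those arcs entirely. Choose local coordinates on $V$ in which $D^s$ is a coordinate hyperplane; then $D^u$ is locally a graph whose ``height'' function $h_0$ in the tangency direction satisfies $h_0(q) = 0$ and $dh_0(q) = 0$. The $C^r$-closeness of $\Delta^u$ to $D^u$ means the corresponding height function $h$ for $\Delta^u$ has $h(q)$ and $dh(q)$ small. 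A $C^r$-small perturbation of $F$ supported in $V$ modifies $\Delta^u$ by replacing its height $h$ with $h - \psi$ for any prescribed $C^r$-small $\psi$ supported in $V$; taking $\psi$ to be a smooth cutoff of the affine function $x \mapsto h(q) + dh(q) \cdot (x - q)$ yields $h - \psi$ vanishing at $q$ with vanishing derivative, giving an actual tangency of $W^u(O(p))$ with $\Delta^s \subset W^s(O(p))$, i.e., a homoclinic tangency associated with the continuation of $p$.

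The main technical obstacle is step (iii): one must simultaneously keep the perturbation $C^r$-small, avoid disturbing the basic set $\Lambda$, the orbit of $p$, and the pieces used in (ii), and convert an \emph{approximate} tangency into an \emph{actual} one. Localization of the perturbation inside $V$ takes care of the first three requirements, while the identity $h_0(q) = dh_0(q) = 0$ (a consequence of the original tangency at $q$) ensures that the quantities we need to correct, $h(q)$ and $dh(q)$, are automatically $C^r$-small, so the cutoff affine correction is itself $C^r$-small.
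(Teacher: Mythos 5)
Your high-level plan matches the standard approach in the literature (the paper itself does not prove this proposition, but cites Newhouse's CIME Lemma 8.4), namely: relate $p$ heteroclinically to periodic approximations of $p_1,p_2$, use the inclination lemma to bring pieces of $W^u(O(p))$ and $W^s(O(p))$ $C^1$-close to the tangent disks $D^u\subset W^u(p_1)$ and $D^s\subset W^s(p_2)$ near $q$, then close the small defect by a perturbation localized near $q$ (which, as you correctly note, lies off $\Lambda$ and can be isolated from the finitely many orbit segments involved).

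There is, however, a genuine gap in step (iii). You set up coordinates in which $D^s$ is a coordinate plane and define $h$ as the height of $\Delta^u$ over that plane, then take $\psi$ to be a cutoff of the affine function matching $h(q)$ and $dh(q)$, so that $h-\psi$ has zero value and zero derivative at $q$. This makes the perturbed $\Delta^u$ tangent \emph{to the reference plane} $D^s\subset W^s(p_2)$ at $q$; it does not produce a tangency with $\Delta^s\subset W^s(O(p))$, which is what you assert in the last line. In fact the perturbed $\Delta^u$ may fail even to intersect $\Delta^s$ at $q$, since $\Delta^s$ is only $C^1$-close to $D^s$ and need not pass through $q$. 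As stated, the argument yields (at best) a heteroclinic tangency $W^u(O(p))\cap W^s(p_2)$, not the homoclinic tangency $W^u(O(p))\cap W^s(O(p))$ that the proposition requires. The repair is straightforward and in the spirit of what you wrote: express $\Delta^s$ also as a graph with small height $h_s$ over the reference plane, and choose $\psi$ to be a cutoff of the affine function matching the value and first derivative of the \emph{gap} $h-h_s$ at $q$ (so that the perturbed $\Delta^u$ passes through $\Delta^s$ at the point above $q$ with matching tangent). Since $h_s$ and $dh_s$ are likewise small at $q$, this $\psi$ is still $C^r$-small and supported in $V$, and the rest of your argument goes through. You should also be slightly more careful with the coordinate setup in higher dimensions (the word ``hyperplane'' and a scalar ``height'' presume codimension one; in general one works with a graph over $T_qD^u$ and a vector-valued displacement in a complement, or uses a parametrized family of shifts and an intermediate-value argument), but this is cosmetic.
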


Proposition~\ref{prop:dense} implies that whenever we have a domain $U\subset{\rm Diff^1}(M)$ with a persistent tangency for a hyperbolic basic set $\Lambda(F)$, the diffeomorphisms with a homoclinic tangency associated with a given saddle $p(F)\in\Lambda(F)$ are $C^r$-dense in $U$ for any $r.$ This inspires the definition of a persistent tangency related to a given saddle.

\begin{defin}\label{def:tan_p}
We will say that an open set $U\subset\Diff^r(M)$ exhibits \emph{a $C^r$-persistent tangency associated with the saddle} $p$ of the diffeomorphism $F\in U$ if for any $G\in U$ the continuation of $p$ is defined and diffeomorphisms with a homoclinic tangency related to the continuation of $p$ are dense in $U$ in $C^r$-topology.  
\end{defin}

\begin{defin}
Suppose that $\Lambda$ is an $F$-invariant subset of $M$ and $TM|_\Lambda = E \oplus G$ is a $dF$-invariant splitting of $TM$ over $\Lambda$ such that the dimensions of the fibers of $E$ and $G$ are constant. Then the splitting $E \oplus G$ is called \emph{dominated} if for some $n\in\nn$ for any $x\in\Lambda$ and any $u\in E(x),\; v\in G(x)$ one has
$$ \;\;\frac{\|dF^n(x)u\|}{\|u\|} \le \frac{1}{2}\cdot\frac{\|dF^{n}(x)v\|}{\|v\|}.$$ 
\end{defin}

One may regard the existence of a dominated splitting as a very weak form of hyperbolic-like behavior. If we add to this definition the assumption that $E$ is purely contractive (or $G$ is dilative), we will obtain the definition of partial hyperbolicity; and if we, moreover, assume that $G$ is purely dilative, we will obtain the definition of a hyperbolic set.

\section{A sufficient condition for the Lyapunov instability of the Milnor attractor}

\begin{prop} \label{prop:suff} Suppose that a diffeomorphism $F\in\Diff^1(M)$ satisfies the following conditions:
\begin{itemize}\setlength\itemsep{-0.1em}
\item $F$ has a hyperbolic saddle $p$ whose unstable manifold $W^u(p)$ intersects the basin of attraction of some sink $\gamma$. 
\item $F$ has a sequence of periodic sinks $\gamma_j, \ j\in\nn,$ that accumulate to $p$, i.e.,
    $$
\mbox{\rm dist }(\gamma_j, p) \to 0 \mbox{ as } j \to \infty.
    $$
\end{itemize}
Then $A_M(F)$ is Lyapunov unstable.
\end{prop}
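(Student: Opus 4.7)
The plan is to argue by contradiction: assume $A_M = A_M(F)$ is Lyapunov stable, and exhibit a wandering point inside $A_M$, contradicting the inclusion $A_M \subset \Omega(F)$ (which holds because $A_M$ is the closure of a union of $\omega$-limit sets and $\Omega(F)$ is closed and contains every $\omega$-limit). I use the two properties of $A_M$ recalled in the introduction: every hyperbolic periodic sink lies in $A_M$, and $A_M \subset \Omega(F)$. As a first observation, $p \in A_M$: every sink $\gamma_j$ lies in $A_M$, and since $A_M$ is closed and $\gamma_j \to p$, also $p \in A_M$.

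Next, under the Lyapunov stability assumption, I would establish the key intermediate claim $W^u(p) \subset A_M$. Given $\varepsilon > 0$, Lyapunov stability provides $\delta > 0$ with $F^n(B_\delta(A_M)) \subset B_\varepsilon(A_M)$ for all $n \ge 0$. Since $p \in A_M$, the set $V := W^u_{\mathrm{loc}}(p) \cap B_\delta(A_M)$ is an open neighborhood of $p$ inside $W^u_{\mathrm{loc}}(p)$. For any $y \in W^u(p)$ the preimages $F^{-n}(y)$ tend to $p$, so $F^{-n}(y) \in V$ for all large $n$, whence $y \in F^n(V) \subset B_\varepsilon(A_M)$. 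Letting $\varepsilon \to 0$ and using that $A_M$ is closed yields $W^u(p) \subset A_M$.

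Finally, I would produce the wandering point. Since $W^u(p) \cap B(\gamma)$ is a nonempty open subset of the uncountable manifold $W^u(p)$ and the orbit of $\gamma$ is finite, there exists $q \in W^u(p) \cap B(\gamma)$ not on the orbit of $\gamma$. Such $q$ is nonperiodic, and $\omega(q)$ equals the orbit of $\gamma$, which does not contain $q$; hence a small neighborhood $W$ of $q$ disjoint from the finite set $\{F(q),\ldots,F^N(q)\}$ and from a small neighborhood of the orbit of $\gamma$ (chosen so as to contain all $F^n(q)$ for $n > N$) satisfies $F^n(W) \cap W = \emptyset$ for every $n \ge 1$, so $q$ is wandering. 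But $q \in W^u(p) \subset A_M \subset \Omega(F)$, a contradiction. The main technical ingredient is the second paragraph, which amounts to the classical observation that the forward iterates of any open neighborhood of $p$ in $W^u_{\mathrm{loc}}(p)$ exhaust $W^u(p)$; it is an immediate consequence of the expansion of $F$ along $W^u(p)$, and the rest of the argument is just bookkeeping.
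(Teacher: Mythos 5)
Your proof is correct and takes essentially the same approach as the paper, differing only in logical presentation: where the paper directly observes that a wandering point $q \in W^u(p)\cap B(\gamma)$ is separated from $A_M$ while its preimages accumulate at $p\in A_M$ (which immediately negates Lyapunov stability), you argue by contradiction, deducing from the stability assumption that $W^u(p)\subset A_M$ and then contradicting $A_M\subset\Omega(F)$ using the same wandering point. Both versions rest on the same three facts about $A_M$ (it is closed, contains every hyperbolic sink, and lies in the non-wandering set), and your wandering-point verification is the standard one, with the minor caveat that $W$ must be shrunk so that $F^n(W)$, not merely $F^n(q)$, misses $W$ for $1\le n\le N$.
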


\begin{proof} 

First note that the Milnor attractor $A_M(F)$ lies in the non-wandering set $\Omega(F).$ Indeed, since a wandering point has a neighborhood free of $\omega$-limit points, it can not belong to the attractor, for otherwise one could subtract the neighborhood of this wandering point from the attractor and obtain a smaller closed set that attracts almost every positive orbit, which is a contradiction.

Further note that every sink  $\gamma_j$ belongs to the Milnor attractor since the basin of attraction of $\gamma_j$ contains a neighborhood of this sink and therefore has positive measure.
As these sinks accumulate to the saddle $p$ and the Milnor attractor is, by definition, closed, we get: $p \in A_M(F)$.

Now consider the sink $\gamma$ from the first assumption of the proposition.  The whole basin of this sink, except for the sink itself, consists of wandering points.
The first assumption says that there is a point of $W^u(p)$ in that basin. Since such a point is wandering, it is separated from the attractor. But the preimages of this point under the iterates of $F$ come arbitrarily close to the saddle $p\in A_M(F)$, which implies the Lyapunov instability of the Milnor attractor.
\end{proof}

\begin{rem}
All statements about Milnor attractor which we prove below are reduced to Proposition~\ref{prop:suff}. Note that the proof of Proposition~\ref{prop:suff} uses only the following three properties of the Milnor attractor: the attractor is closed, contains every sink and lies in the non-wandering set. Therefore, analogous statements are true for any other definition of attractor provided that the definition under consideration implies these three properties.
Further note that Proposition~\ref{prop:suff} does not really need the assumption that $F$ is a diffeomorphism: this map may as well be just a local diffeomorphism.
\end{rem}

\begin{rem}
Previously we defined Lyapunov stability in purely topological terms. However, one can consider another definition that is very much alike but not equivalent. Namely, let us call an invariant set $K$ \emph{metrically Lyapunov stable} if for any $\e>0$ there is $\delta>0$ such that any positive semi-orbit that starts $\delta$-close to $K$ never leaves the $\e$-neighborhood of~$K$. Then an analogue of Proposition~\ref{prop:suff} with Lyapunov instability replaced by metric Lyapunov instability is true and the proof does not utilize the closeness of the attractor. Thus, if we confine ourselves with metric Lyapunov stability, we can restate every result of this paper for any other definition of attractor if this definition implies that the attractor contains every sink and lies in the non-wandering set.   
\end{rem}

\section{The Newhouse phenomena and the instability of attractors}

In this section we will prove that every domain with a persistent tangency for a sectionally dissipative saddle has a residual subset of diffeomorphisms with Lyapunov unstable Milnor attractors.

\subsection{The Newhouse phenomena}

In the 70th S. Newhouse proved a number of results on persistent homoclinic tangencies and coexistence of infinitely many sinks or sources.

\begin{thm}[Newhouse, \cite{N70, N74}]\label{thm:N}
For any manifold $M$ of dimension greater than one and for any $r\ge 2$ there is an open set $U\subset\Diff^r(M)$ such that any diffeomorphism $G\in U$ exhibits a persistent tangency for a basic set $\Lambda(G)$ and a topologically generic diffeomorphism in $U$ has infinitely many periodic sinks.
\end{thm}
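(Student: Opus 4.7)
The plan is to follow Newhouse's classical strategy based on the theory of thickness of Cantor sets. We start from a $C^r$-diffeomorphism ($r\ge 2$) possessing a homoclinic tangency at a dissipative periodic saddle $p$; such diffeomorphisms are classically known to exist on any manifold of dimension at least two, e.g.\ via the Smale horseshoe with a suitably degenerated heteroclinic picture. By a small $C^r$-perturbation, we embed $p$ in a basic set $\Lambda$ of saddle type whose stable and unstable Cantor sets, obtained by intersecting $W^s(\Lambda)$ and $W^u(\Lambda)$ with a suitable transversal to the laminations, have large thicknesses in the sense of Newhouse.

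For the open set of persistent tangencies, the central tool is the Newhouse gap lemma: two Cantor sets on a line whose thicknesses $\tau_1,\tau_2$ satisfy $\tau_1\tau_2>1$ and whose convex hulls are linked necessarily intersect. Thickness is continuous in the $C^2$-topology, so a small $C^r$-perturbation preserves both the thickness condition and the linkedness; consequently the continuation $\Lambda(G)$ continues to exhibit tangencies between leaves of its stable and unstable laminations for all $G$ in a $C^r$-open set $U$.

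For the generic infinitude of sinks, we combine this with the creation-of-sinks mechanism. By Proposition~\ref{prop:dense}, inside $U$ the diffeomorphisms having a homoclinic tangency associated with the continuation of a fixed sectionally dissipative saddle $p\in\Lambda$ are $C^r$-dense. Near each such tangency, a one-parameter unfolding produces a periodic sink: in appropriate coordinates the return map through the tangency is a perturbation of a H{\'e}non-like quadratic family, and sectional dissipativity guarantees that the newborn periodic orbit is attracting on an open interval of parameter values. Since hyperbolic sinks are $C^r$-robust, one can iterate this construction while preserving the sinks already produced. Hence for every $N\in\nn$, the set $S_N\subset U$ of diffeomorphisms having at least $N$ hyperbolic periodic sinks is open and dense, and $\bigcap_{N\in\nn} S_N$ is a residual subset of $U$ consisting of diffeomorphisms with infinitely many sinks.

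The hardest step, and the reason for the restriction $r\ge 2$, is producing basic sets of arbitrarily large thickness near a given homoclinic tangency: the return map through the near-tangency region twists and compresses one lamination in a way that could a priori collapse its thickness, and it is the $C^2$-bounded distortion of such returns that makes the thickness survive and the gap lemma applicable. In $C^1$ topology thickness can drop to zero under arbitrarily small perturbations, which is precisely why entirely different ideas (such as those of Bonatti--D{\'i}az or Asaoka) are needed in the $C^1$ setting in dimension $\ge 3$.
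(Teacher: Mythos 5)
The paper quotes this theorem as a background result, citing Newhouse \cite{N70, N74}, and does not reproduce its proof; there is therefore no internal argument to compare against. Your sketch is a correct outline of the classical Newhouse strategy and captures its essential ingredients: a hyperbolic basic set of saddle type whose stable and unstable Cantor sets have thickness product exceeding one, the gap lemma, $C^2$-continuity of thickness to obtain an open set of persistent tangencies, the creation of sinks by unfolding tangencies near a (sectionally) dissipative saddle, and the Baire argument with the open--dense sets $S_N$.

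Two points are worth stressing. First, your presentation asserts at the outset that the basic set can be arranged to have large stable and unstable thicknesses, and only later notes that producing such thick Cantor sets near a given tangency is the hardest step; logically the hard step must be established first, since the rest of the argument rests on it. Second, Newhouse's \cite{N70, N74} constructions are intrinsically two-dimensional; for $\dim M > 2$ one has to either embed the surface example in a normally hyperbolic attracting invariant surface (so that the ambient dynamics reduces to the two-dimensional picture, with sectional dissipativity supplied by the strong normal contraction), or invoke the higher-dimensional extension of Palis--Viana, which is exactly the route the paper takes when it later states Theorem~\ref{thm:ptpv}. Neither of these is a gap in your argument so much as a detail worth making explicit if the theorem is to hold in the generality stated.
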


\begin{thm}[Newhouse, \cite{N79}]\label{thm:N2}
Let $F\in{\rm Diff^2}(M),\; \dim M = 2,$ have a periodic saddle $p$ with a homoclinic tangency. Then arbitrarily close to $F$ there exists an open set $U$ of $C^2$-diffeomorphisms with a persistent tangency for some basic set $\Lambda(G)$ that continuously depends on $G\in U$.
\end{thm}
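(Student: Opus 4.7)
\textbf{Proof proposal for Theorem~\ref{thm:N2}.}

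The plan is to follow Newhouse's original strategy, whose core is the \emph{thickness} of Cantor sets and the associated gap lemma. By an arbitrarily small $C^2$-perturbation I may first assume that the given homoclinic tangency of $p$ is a non-degenerate quadratic tangency, and I embed $F$ in a generic one-parameter family $\{F_\mu\}$ unfolding this tangency transversely at $\mu=0$. Both steps are standard consequences of transversality and do not move $F$ outside any prescribed $C^2$-neighborhood.

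The geometric heart of the argument is to construct, for a suitable parameter $\mu$, a hyperbolic horseshoe $\Lambda_\mu$ near the tangency orbit whose invariant Cantor sets are ``thick''. Using linearizing coordinates near $p$ together with the normal form of the parabolic return map generated by the tangency, high iterates of $F_\mu$ exhibit Smale horseshoes $\Lambda_\mu$ heteroclinically related to $p$; these horseshoes depend continuously on the diffeomorphism. By iterating the construction (passing to deeper returns) one finds some $\mu_0$ and a corresponding basic set $\Lambda_{\mu_0}$ whose stable and unstable foliations cut out on a common transverse arc two Cantor sets $K^s, K^u$ that are \emph{linked} (neither sits in a bounded gap of the other) and satisfy
\[
\tau(K^s)\cdot\tau(K^u) > 1.
\]

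Now I would invoke two classical ingredients of Newhouse. First, the \emph{gap lemma}: two linked Cantor sets in $\rr$ whose thicknesses have product greater than one must intersect. Applied to $K^s$ and $K^u$, this produces a point of tangency between $W^s(\Lambda_{\mu_0})$ and $W^u(\Lambda_{\mu_0})$. Second, the $C^2$-continuity of thickness: the thicknesses of the Cantor sets associated with the hyperbolic continuation $\Lambda(G)$ depend continuously on $G\in \Diff^2(M)$, and the linking configuration also persists under small $C^2$-perturbations. Hence both the inequality $\tau^s\cdot\tau^u>1$ and the linking remain valid for all $G$ in a $C^2$-open neighborhood $U$ of $F_{\mu_0}$, which therefore exhibits a tangency between the stable and unstable manifolds of $\Lambda(G)$ for every $G\in U$. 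Since $\mu_0$ may be chosen as small as desired, $U$ lies arbitrarily $C^2$-close to the original $F$, giving the persistent tangency required by Definition~\ref{def:per_tan}.

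The main obstacle is the continuity of thickness under $C^2$-perturbations. Thickness is computed from ratios of gap lengths to bridge lengths, which depend on the local geometry of the stable and unstable foliations of $\Lambda(G)$; their continuous dependence on $G$ relies on those foliations being $C^1$, a regularity property that holds for basic sets of surface diffeomorphisms exactly when the map is $C^2$. This is precisely the reason the theorem is formulated in the $C^2$ category on surfaces and does not, without substantial additional input, extend to $C^1$ or to higher dimensions.
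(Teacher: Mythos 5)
The paper does not prove Theorem~\ref{thm:N2}; it cites it as Newhouse's theorem from \cite{N79} and uses it as a black box. So there is no ``paper's own proof'' to compare against. Your sketch is a faithful outline of the standard argument from \cite{N79} (and its exposition in \cite{PT}): unfold a quadratic tangency generically, build a hyperbolic horseshoe near the tangency whose stable and unstable Cantor sets have large thickness, invoke the gap lemma to force a persistent intersection, and appeal to the $C^2$-continuity of thickness to get openness.

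That said, the sketch elides the single hardest step: you write ``by iterating the construction (passing to deeper returns) one finds some $\mu_0$ and a corresponding basic set $\Lambda_{\mu_0}$ whose \dots Cantor sets \dots satisfy $\tau(K^s)\cdot\tau(K^u)>1$.'' This is precisely where Newhouse's real work lies. Unfolding a tangency near a saddle does not automatically produce a thick horseshoe; in fact, the elementary Smale horseshoes created at deeper returns have thickness going to zero, not infinity, unless the returns are chosen carefully. Newhouse's construction requires a delicate bootstrapping argument (the ``wild hyperbolic set'' construction) in which one first creates a small thick Cantor set near the tangency using the parabolic normal form and the unbounded distortion available at quadratic maxima, then shows the resulting basic set still has a tangency, and iterates. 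You also assert the linking of $K^s$ and $K^u$ without explaining why a linked configuration rather than a nested one can always be arranged; this requires an argument. Finally, a small correction of emphasis: the continuity of thickness for basic sets of $C^2$ surface diffeomorphisms does not come for free from the foliations ``being $C^1$'' --- it is a genuine theorem of Newhouse whose proof uses bounded distortion estimates for the Cantor construction, not just regularity of the invariant laminations. None of this makes your outline wrong, but as written it would not constitute a proof; it is a road map whose central tunnel is unexcavated.
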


J. Palis and M. Viana in \cite{PV} generalized the second result for higher dimensions assuming that the saddle $p$ was sectionally dissipative. Although they did not construct a single basic set with persistent tangency, they obtained a persistent heteroclinic tangency for a couple of basic sets and deduced from it the persistent tangency associated with the continuation of some sectionally dissipative saddle. The latter persistent tangency implied a locally generic coexistence of infinitely many sinks. The argument in \cite{PV} generalized the new proof of the Newhouse phenomena presented in the book \cite{PT}.  

\begin{thm}[Palis, Viana, \cite{PV}]\label{thm:ptpv}
Let $F\in{\rm Diff^2}(M)$ have a sectionally dissipative periodic saddle $p$ with a homoclinic tangency. Then arbitrarily close to $F$ there exists an open set $U$ of $C^2$-diffeomorphisms with a persistent tangency associated with the continuation of some sectionally dissipative saddle $p_1$ and topologically generic diffeomorphisms in $U$ have infinitely many sinks.\footnote{Actually, in \cite{PT, PV} only tangencies between invariant manifold of one and the same saddle are considered, but Theorem~\ref{thm:ptpv} is true as stated here; see Proposition~\ref{prop:goodtan3} below.}
\end{thm}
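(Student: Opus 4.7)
The plan is to adapt the classical Newhouse--Palis--Takens construction of persistent tangencies to the sectionally dissipative higher-dimensional setting, following the strategy of \cite{PV}. Starting from the tangency orbit of $p$, I would first perform a rescaled renormalization of a first-return map near one point of the tangency. Because $p$ is sectionally dissipative (and not merely dissipative), its derivative contracts two-dimensional volumes, so along a one-parameter unfolding of the tangency the renormalized return maps can be brought into a form that effectively lives on a two-dimensional disk --- morally a H\'enon-like family --- even when $\dim M \ge 3$. This ``essentially two-dimensional'' reduction is the key technical difference between \cite{PV} and the two-dimensional Theorem~\ref{thm:N2}.

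Using this renormalization one locates, arbitrarily close to the tangency point, a horseshoe $\Lambda_0$ of saddle type whose periodic points are sectionally dissipative. Now follow the thickness strategy: by a further small $C^2$-perturbation, enlarge $\Lambda_0$ and arrange that its stable and unstable Cantor sets, cut out on a one-dimensional transversal to the tangency direction, both have local thickness greater than one. This requires controlling distortion of the return maps along long orbits and exploiting two one-dimensional foliations on $\Lambda_0$ --- the stable foliation together with the strong-unstable foliation that exists because sectional dissipativity singles out a weakest contraction direction transverse to $E^u$. Once the product of thicknesses exceeds one, Newhouse's gap lemma forces the two Cantor sets to persistently intersect under further $C^2$-small perturbations, which yields a $C^2$-persistent tangency associated with the hyperbolic continuation of $\Lambda_0$ in the sense of Definition~\ref{def:per_tan}. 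Proposition~\ref{prop:dense} then upgrades this to a persistent tangency associated with a specific sectionally dissipative periodic saddle $p_1 \in \Lambda_0$ in the sense of Definition~\ref{def:tan_p}.

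For the second conclusion --- generic coexistence of infinitely many sinks in $U$ --- I would use the standard Newhouse sink-creation trick, now made available by the persistent tangency associated with the sectionally dissipative $p_1$. When a homoclinic tangency of $p_1$ unfolds, the same renormalization as above produces a H\'enon-like family in which a sink appears for a definite range of parameters; since having a sink is an open condition, this gives an open set in $U$ of diffeomorphisms with an additional sink, and by $C^2$-density of tangencies it is in fact open and dense. Iterating, for each $n$ one obtains an open dense $V_n \subset U$ of diffeomorphisms with at least $n$ distinct sinks, and $\bigcap_n V_n$ is the desired residual set.

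The main obstacle is the second paragraph: making the thickness argument work outside of surfaces requires careful control of the geometry of the invariant manifolds of $\Lambda_0$ across both the renormalization step and the perturbations used to fatten the horseshoe, and it is exactly here that sectional dissipativity (rather than plain dissipativity) is essential, both to justify the two-dimensional reduction in the normal form near the tangency and to ensure a coherent one-dimensional strong-unstable foliation whose holonomies behave tamely enough for Newhouse's gap lemma to apply.
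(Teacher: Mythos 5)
This theorem is an imported result: the paper does not reprove Palis--Viana, it cites \cite{PV} and supplies exactly one additional ingredient. The paper's ``proof'' consists of the citation plus the footnote pointing to Proposition~\ref{prop:goodtan3}, which handles the discrepancy between the hypothesis in \cite{PT,PV} (a tangency between $W^s(p)$ and $W^u(p)$, the \emph{same} periodic point) and the paper's broader notion of homoclinic tangency (a non-transverse intersection of $W^s(p_1)$ and $W^u(p_2)$ for possibly different points $p_1,p_2$ of $O(p)$). Your proposal instead sketches the whole Palis--Viana construction from scratch --- renormalization to a quadratic family, horseshoe with fat Cantor sets, gap lemma, Newhouse sink argument. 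That is a much heavier route than the paper takes, and, more importantly, it nowhere addresses the case $p_1 \ne p_2$. Reducing to the single-point case is the only new content the paper contributes to this statement (via Proposition~\ref{prop:goodtan3}); as written, your argument establishes only the restricted version that \cite{PV} already covers.

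There is also a terminology slip in the \cite{PV} sketch that suggests a blurred picture of the higher-dimensional mechanism. You invoke a ``strong-unstable foliation'' arising because ``sectional dissipativity singles out a weakest contraction direction transverse to $E^u$.'' For a sectionally dissipative saddle the unstable bundle is already one-dimensional, so there is no strong-unstable subfoliation. The foliation \cite{PV} actually exploits is the strong \emph{stable} foliation (tangent to the strong stable eigendirections), along which one projects $W^s$ to a curve so that the stable Cantor set becomes one-dimensional and thickness is meaningful; the weakest contracting direction is what survives that projection, and it is stable, not unstable. The idea you are reaching for --- reduce to a two-dimensional picture and run the thickness argument --- is the right one, but this is precisely the delicate step in \cite{PV}, and it needs to be stated correctly.
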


\subsection{Instability of attractors}
This subsection is devoted to the applications of Theorem \ref{thm:NI}.  Let us first repeat this theorem for convenience.

\begin{repthm1}
Suppose that in an open set $U\subset{\rm Diff}^r(M),\; r\ge 1,$ there is a persistent tangency associated with a sectionally dissipative periodic saddle $p$. Then a topologically generic diffeomorphism in $U$ has a Lyapunov unstable Milnor attractor.
\end{repthm1}

\subsubsection{Homoclinic tangencies and instability}

\begin{cor}\label{cor:mger}
Suppose that in an open domain $U\subset\Diff^r(M),\; r\ge 1,$ there is a persistent tangency associated with a basic set $\Lambda(F),\; F\in U,$ with a sectionally dissipative periodic saddle $p(F)$ that continuously depends on $F\in U$. Then a topologically $C^m$-generic ($m\ge r$) diffeomorphism in $U$ has a Lyapunov unstable Milnor attractor. 
\end{cor}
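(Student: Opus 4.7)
The strategy is to reduce Corollary~\ref{cor:mger} directly to Theorem~\ref{thm:NI} applied in the $C^m$-category. I would set $U_m := U \cap \Diff^m(M)$; this is open in $\Diff^m(M)$ because the inclusion $\Diff^m(M) \hookrightarrow \Diff^r(M)$ is continuous for $m \ge r$. Over $U_m$ the continuation of $p$ is still a sectionally dissipative periodic saddle (sectional dissipativity is an open condition on the eigenvalues of $dF^{\per(p)}(p)$), and its continuous dependence on $F\in U$ in the $C^r$ topology automatically yields continuous dependence in the finer $C^m$ topology on $U_m$.

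The key step is to show that diffeomorphisms in $U_m$ exhibiting a homoclinic tangency associated with the continuation of $p$ form a $C^m$-dense subset of $U_m$. Fix an arbitrary $F \in U_m \subset U$. By Definition~\ref{def:per_tan}, there exist $p_1, p_2 \in \Lambda(F)$ such that $W^u(p_1)$ and $W^s(p_2)$ have an orbit of tangency. Since Proposition~\ref{prop:dense} is stated for every smoothness class $r\ge 1$, I would apply it with the smoothness parameter taken equal to $m$; this produces a diffeomorphism $G \in \Diff^m(M)$ arbitrarily $C^m$-close to $F$ with a homoclinic tangency associated with the continuation of $p$. This yields the desired $C^m$-density.

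With both the $C^m$-density of homoclinic tangencies and the continuous $C^m$-dependence of the sectionally dissipative saddle $p$ verified, Theorem~\ref{thm:NI} applied to $U_m \subset \Diff^m(M)$ produces a residual subset of $U_m$ (in the $C^m$-topology) consisting of diffeomorphisms whose Milnor attractor is Lyapunov unstable, which is precisely the claim. The only subtle point lies in the density step: one must confirm that the bump-function surgery and $\lambda$-lemma arguments underlying Proposition~\ref{prop:dense} go through without loss at smoothness class $m$. Since the perturbations there are built from $C^\infty$ data, the proof carries over uniformly in the smoothness parameter, which is exactly why Proposition~\ref{prop:dense} is formulated for all $r \ge 1$.
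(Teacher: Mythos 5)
Your proof is correct and is essentially the same as the paper's: both reduce the claim to Theorem~\ref{thm:NI} applied to $U\cap\Diff^m(M)$, using Proposition~\ref{prop:dense} (with smoothness parameter $m$) to establish $C^m$-density of diffeomorphisms exhibiting a homoclinic tangency associated with $p$. You spell out the routine checks (openness of $U_m$ in $\Diff^m(M)$, persistence of sectional dissipativity, continuity of $p(F)$ in the finer topology) that the paper leaves implicit, but the underlying argument is identical.
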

\begin{proof}
Proposition \ref{prop:dense} implies that diffeomorphisms with a tangency associated with the saddle $p(F)$ are $C^m$-dense in $U\cap\Diff^m(M)$. Then we can apply Theorem~\ref{thm:NI} to $U\cap\Diff^m(M)$ and conclude the proof.
\end{proof}

\begin{cor}\label{cor:generic2}
For any smooth compact two-dimensional manifold $M$ there exists a $C^2$-open set $U\subset\Diff^2(M)$ such that any $C^r$-generic ($r\ge2$) diffeomorphism $F\in U$ has a Lyapunov unstable Milnor attractor.
\end{cor}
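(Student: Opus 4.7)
The plan is to combine Newhouse's/Palis--Viana's construction of persistent tangencies with Theorem~\ref{thm:NI} (or, equivalently, Corollary~\ref{cor:mger}). The strategy has three steps: (i) produce a concrete $C^2$-diffeomorphism of $M$ that has a sectionally dissipative periodic saddle with a homoclinic tangency; (ii) apply a Newhouse-type theorem to obtain, arbitrarily close to it, a $C^2$-open domain $U$ with a persistent tangency associated with the continuation of a sectionally dissipative saddle; and (iii) invoke Theorem~\ref{thm:NI} on $U\cap\Diff^m(M)$ for every $m\ge r\ge 2$.

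For step (i), the key observation is that on a two-dimensional manifold the notion of sectional dissipativity collapses to ordinary dissipativity: a hyperbolic saddle has exactly one expanding and one contracting eigenvalue, and $|\lambda_s \lambda_u|<1$ is the single condition in Definition~\ref{def:diss}. Hence it suffices to construct an area-contracting horseshoe with a homoclinic tangency inside a small disk in $M$. Such a construction is standard and purely local: one can start from any diffeomorphism of $M$, modify it inside an arbitrary small disk $D\subset M$ with $F(D)\subset D$ to obtain a dissipative Smale horseshoe, and then make a further small $C^2$-perturbation (supported in $D$) that bends the stable and unstable manifolds of one of the saddles into tangency. Since $D$ is disjoint from its complement's dynamics, this works on \emph{any} smooth compact two-dimensional manifold $M$.

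For step (ii), we apply Theorem~\ref{thm:ptpv} (Palis--Viana) to the diffeomorphism from step (i): arbitrarily $C^2$-close to it, there is an open set $U\subset\Diff^2(M)$ with a persistent tangency associated with the continuation of some sectionally dissipative saddle $p_1(G)$, $G\in U$. (In dimension two one could alternatively combine Newhouse's Theorem~\ref{thm:N2} with the fact that within an area-contracting basic set every saddle is dissipative, hence sectionally dissipative.) For step (iii), fix $m\ge r\ge 2$. The persistent tangency in $U$ is associated with a basic set containing $p_1(G)$, and by Proposition~\ref{prop:dense} diffeomorphisms exhibiting a homoclinic tangency associated with the continuation of $p_1$ are $C^m$-dense in $U\cap\Diff^m(M)$. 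Corollary~\ref{cor:mger} then gives that a $C^m$-generic diffeomorphism in $U$ has a Lyapunov unstable Milnor attractor, which is exactly the claim.

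The only non-trivial input is the existence of a base diffeomorphism in step (i); the main potential obstacle is ensuring that this construction works on an \emph{arbitrary} closed surface, but since the whole construction can be placed inside one embedded disk the topology of $M$ plays no role. Everything else is a direct citation of Theorem~\ref{thm:ptpv} and Corollary~\ref{cor:mger}.
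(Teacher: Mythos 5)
Your strategy differs from the paper's, and it has a genuine gap. The paper's proof is shorter: it invokes Newhouse's Theorem~\ref{thm:N} directly, which already asserts (for any surface) the existence of a $C^2$-open set with a persistent tangency for a basic set $\Lambda(G)$; it then perturbs (and, if need be, passes to $F^{-1}$) so that $\Lambda$ has a dissipative saddle, and applies Corollary~\ref{cor:mger}. Your step~(i) --- building a dissipative horseshoe with a tangency by hand --- is correct but superfluous once you have Theorem~\ref{thm:N}, and it is the subsequent route through Theorem~\ref{thm:ptpv} that introduces the problem.

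The gap is in step~(iii). You assert that ``the persistent tangency in $U$ is associated with a basic set containing $p_1(G)$'' in order to invoke Proposition~\ref{prop:dense} and hence Corollary~\ref{cor:mger}. But Theorem~\ref{thm:ptpv}, as stated, only gives a persistent tangency \emph{associated with the saddle} $p_1$ in the sense of Definition~\ref{def:tan_p}, i.e.\ $C^2$-density of tangencies for the continuation of $p_1$. The paper explicitly remarks, in the paragraph preceding Theorem~\ref{thm:ptpv}, that Palis--Viana ``did not construct a single basic set with persistent tangency''; they obtained a persistent heteroclinic tangency for a pair of basic sets and then deduced the saddle-level statement. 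Without a single basic set exhibiting the persistent tangency you cannot invoke Proposition~\ref{prop:dense}, so you cannot promote $C^2$-density of tangencies to $C^m$-density for $m>2$; applying Theorem~\ref{thm:NI} then only yields $C^2$-genericity, not the asserted $C^r$-genericity for every $r\ge 2$. Your parenthetical alternative via Theorem~\ref{thm:N2} is closer, since Theorem~\ref{thm:N2} does produce a basic set $\Lambda(G)$, but you would then still need to argue that $\Lambda(G)$ contains a dissipative saddle --- your observation that area-contracting basic sets consist of dissipative saddles is true, yet you never show that $\Lambda(G)$ is area-contracting (the statement of Theorem~\ref{thm:N2} does not say so). The clean fix is what the paper does: start from Theorem~\ref{thm:N}, perturb so that the periodic saddles of $\Lambda$ are not area-preserving, and pass to the inverse diffeomorphism if necessary; then Corollary~\ref{cor:mger} applies directly to a small $C^2$-neighborhood.
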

\begin{proof}
Consider an open set $U$ given by Theorem \ref{thm:N}. Take any $F\in U$ and note that after a small perturbation and, perhaps, time inversion, we may assume that in a $C^2$-neighborhood $V$ of $F$ the basic set $\Lambda$ that exhibits the persistent tangency also has a dissipative saddle. Then Corollary~\ref{cor:mger} applied to $V$ finishes the proof.
\end{proof}

\begin{cor}\label{cor:generic3}
For every smooth compact manifold $M$ of dimension $k\ge 3$ there exists a $C^1$-open domain $U\subset{\rm Diff^1}(M)$ such that any $C^r$-generic ($r\ge1$) diffeomorphism in $U$ has a Lyapunov unstable Milnor attractor.
\end{cor}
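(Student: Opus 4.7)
The plan is to invoke the $C^1$ Newhouse-type construction mentioned in the introduction (Section~8 of \cite{CIME}, rediscovered by Asaoka in \cite{Masa}), which on any manifold $M$ of dimension $k\ge 3$ produces a $C^1$-open set $U\subset\Diff^1(M)$ together with a basic set $\Lambda(F)$ depending continuously on $F\in U$, such that the diffeomorphisms in $U$ exhibiting a homoclinic tangency associated with some pair of points of $\Lambda(F)$ form a $C^1$-dense subset of $U$. First I would verify that the model underlying this construction can be arranged so that $\Lambda(F)$ contains a periodic saddle $p(F)$, depending continuously on $F$, whose unique expanding eigenvalue $\lambda_1$ and contracting eigenvalues $\lambda_2,\dots,\lambda_k$ satisfy $|\lambda_1\lambda_j|<1$ for every $j\ge 2$; that is, $p(F)$ is sectionally dissipative in the sense of Definition~\ref{def:diss}. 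Concretely, it suffices to take the local horseshoe supporting the persistent tangency in a two-dimensional plaque and to make the dynamics sufficiently strongly contracting in the remaining $k-2$ normal directions, so that all $|\lambda_1\lambda_j|$ are small.

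Once such $U$ and $p(F)$ are in hand, the rest of the argument is routine. By Proposition~\ref{prop:dense}, for every $r\ge 1$ the diffeomorphisms exhibiting a homoclinic tangency associated with the continuation of $p(F)$ are $C^r$-dense in $U\cap\Diff^r(M)$. Applying Theorem~\ref{thm:NI} to $U\cap\Diff^r(M)$, or equivalently applying Corollary~\ref{cor:mger} directly, then yields that a $C^r$-topologically generic diffeomorphism in $U$ has a Lyapunov unstable Milnor attractor.

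The main obstacle, compared with the proof of Corollary~\ref{cor:generic2}, is producing the sectional dissipativity in the higher-dimensional setting. In dimension two, sectional dissipativity of a saddle is equivalent to $|\det\,dF^{\per(p)}(p)|<1$, and one can obtain it for free by replacing $F$ by $F^{-1}$ whenever necessary, which is exactly the step used in the proof of Corollary~\ref{cor:generic2}. In dimension $k\ge 3$ this trick fails: the inverse of a sectionally dissipative saddle has $k-1\ge 2$ expanding eigenvalues and so is never sectionally dissipative. Consequently the sectional dissipativity of $p(F)$ must be baked into the Newhouse--Asaoka model itself, rather than recovered by passing to the inverse. Granting this feature of the construction, Corollary~\ref{cor:generic3} follows by a direct application of Corollary~\ref{cor:mger} to the resulting open domain.
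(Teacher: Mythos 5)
Your proposal is correct and follows essentially the same route as the paper: invoke the Asaoka/CIME $C^1$-persistent tangency construction and apply Corollary~\ref{cor:mger}. The paper cites this as Theorem~\ref{MA}, whose statement already asserts that the basic set contains a sectionally dissipative saddle, so the verification you contemplate amounts to a citation; note, however, that your sketch of how sectional dissipativity would be arranged (a dissipative horseshoe in a two-dimensional plaque with strong normal contraction) does not quite match Asaoka's actual mechanism, which, as explained in Remark~\ref{rem:easy}, produces the sectionally dissipative saddle from a Plykin \emph{repeller} after time inversion, although this discrepancy does not affect the argument.
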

\begin{proof}
This is verified by applying Corollary~\ref{cor:mger} to the following theorem\footnote{It is more convenient to give a reference to the work of M.~Asaoka here, though the construction that underlies this result can be found in \cite{CIME}.}.

\begin{thm}[Asaoka, \cite{Masa}]\label{MA}
For any smooth manifold of dimension at least three there exists a $C^{\infty}$-diffeomorphism such that it admits a hyperbolic basic set that contains a sectionally dissipative saddle and exhibits a $C^1$-persistent tangency.
\end{thm}

\end{proof}

\begin{rem}\label{rem:easy}
Actually, one does not need Theorem \ref{thm:NI} to prove either of Corollaries~\ref{cor:generic2} and~\ref{cor:generic3}.

In the two-dimensional case, as it was  observed independently by Yu.~S.~Ilyashenko and A.~Okunev, we may take $F\in{\rm Diff^2}(M)$ as in Theorem~\ref{thm:ptpv}, i.e. with a homoclinic tangency associated with a dissipative saddle $p$, and additionally require that $W^u(O(p))$ intersect the basin of some sink.\footnote{Both conditions can be achieved by isotopically changing a linear mapping with a saddle at the origin, and this procedure may be adapted to any two-dimensional manifold, so the required map exists.} The latter property persists under small perturbations. Therefore we may assume that any diffeomorphism in the domain $U$ given by Theorem~\ref{thm:ptpv} possesses this property. The next step would be to modify the proof of the two-dimensional version of Theorem~\ref{thm:ptpv} given in \cite{PT} in order to show that one may take the saddle $p_1$ that is homoclinically related to the continuation of $p$. This would imply, with the help of the inclination lemma, that for the maps in $U$ the unstable manifold $W^u(p_1(G))$ intersects the basin of the aforementioned sink as well. It is easy to show (and we will show it below) that for a $C^r$-generic diffeomorphism in $U$ sinks accumulate to (the continuation of) $p_1$. Then a $C^r$-generic diffeomorphism in $U$ satisfies both assumptions of Proposition~\ref{prop:suff} with respect to the continuation of $p_1$ and therefore has an unstable Milnor attractor.

Moreover, it turns out peculiarly that the construction presented by M.~Asaoka in \cite{Masa} also ensures that generically both requirements of Proposition~\ref{prop:suff} are satisfied. He utilizes a normally hyperbolic repelling disk such that the restriction of the mapping to this disk is a Plykin map (see the original paper \cite{Ply} by Plykin and also \cite{GUCK} for a slightly modified and somewhat simpler example of such map) with an area expanding saddle that plays a key role in constructing the persistent tangency. One may observe that the stable manifold of this saddle intersects the repelling basin of a source of the Plykin map. After the inversion of time one gets a sectionally dissipative saddle $p$ with $W^u(p)$ intersecting a basin of the sink that was the aforementioned source prior to time inversion. Then a variant of the Newhouse argument (or Baire argument, if you like) yields that sources accumulate to that saddle locally generically.    
\end{rem}

\begin{cor}\label{cor:approx}
Any $C^2$-diffeomorphism $F$ exhibiting a homoclinic tangency associated with a sectionally dissipative periodic saddle $p$ belongs to the closure of a $C^2$-open set $U$ such that a generic diffeomorphism in $U$ has an unstable Milnor attractor.  
\end{cor}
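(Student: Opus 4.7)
The plan is to combine Theorem~\ref{thm:ptpv} of Palis--Viana with Theorem~\ref{thm:NI}. The former produces, arbitrarily close to $F$, nonempty $C^2$-open sets exhibiting a persistent tangency associated with the continuation of some sectionally dissipative saddle $p_1$; this is precisely the hypothesis required by Theorem~\ref{thm:NI} (with $r=2$). The only additional work is to assemble several such sets so that $F$ itself lies in their common closure.

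First I would invoke Theorem~\ref{thm:ptpv} iteratively: for each $n \in \nn$, choose a nonempty $C^2$-open set $V_n$ whose closure lies in the punctured ball $\{G : 0 < d_{C^2}(G, F) < 1/n\}$, such that on $V_n$ a persistent tangency is associated with the continuation of some sectionally dissipative saddle $p_1(n)$. By shrinking $V_n$ further, I can arrange it to lie in the disjoint annular shell $\{G : 1/(n+1) < d_{C^2}(G, F) < 1/n\}$, so the family $\{V_n\}$ is pairwise disjoint while still satisfying $F \in \overline{\bigcup_n V_n}$. Setting $U := \bigcup_{n \in \nn} V_n$, this is a $C^2$-open set with $F \in \overline{U}$. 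Next, applying Theorem~\ref{thm:NI} to each $V_n$ separately produces a residual subset $R_n \subset V_n$ of diffeomorphisms whose Milnor attractor is Lyapunov unstable.

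It remains to verify that Lyapunov instability of the Milnor attractor is residual in the full set $U$. Writing each $R_n$ as containing $\bigcap_k W_{n,k}$ with $W_{n,k}$ open and dense in $V_n$, the pairwise disjointness of the $V_n$ gives the set-theoretic identity $\bigcap_k \bigcup_n W_{n,k} = \bigcup_n \bigcap_k W_{n,k}$; the right-hand side is contained in $\bigcup_n R_n$, while each $\bigcup_n W_{n,k}$ is open and dense in $U$. Hence $\bigcup_n R_n$ contains a dense $G_\delta$ subset of $U$, which establishes the corollary. I do not anticipate any substantive obstacle: Theorems~\ref{thm:ptpv} and~\ref{thm:NI} do all the real work, and the only care required is the elementary Baire-category bookkeeping above, which is trivialised by the disjointness of the $V_n$.
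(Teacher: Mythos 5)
Your proof follows the same route as the paper's: invoke Theorem~\ref{thm:ptpv} to obtain, arbitrarily $C^2$-close to $F$, open domains with a persistent tangency associated with a sectionally dissipative saddle, apply Theorem~\ref{thm:NI} to each, and set $U$ to be the union. The one genuine contribution beyond the paper's terse proof is that you spell out the Baire-category bookkeeping (the paper just asserts the conclusion without addressing why residuality survives the union), and your device of forcing the $V_n$ to be pairwise disjoint is a clean elementary way to make this transparent, since it turns $\bigcap_k\bigcup_n W_{n,k}$ into $\bigcup_n\bigcap_k W_{n,k}$ exactly.

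There is one small imprecision: you claim you can place each $V_n$ inside the fixed annulus $\{G: 1/(n+1)<d_{C^2}(G,F)<1/n\}$, but Theorem~\ref{thm:ptpv} only controls the outer radius, not the inner one --- the open set it produces inside $B(F,1/n)$ might lie entirely within $B(F,1/(n+1))$, and shrinking it cannot fix that. The repair is to choose the radii adaptively rather than from a fixed sequence: having picked a nonempty open $V_n$ with $\overline{V_n}\subset B(F,\rho_{n-1})\setminus\{F\}$, set $\rho_n:=\min\bigl(\tfrac12\dist(F,\overline{V_n}),\,1/n\bigr)>0$ and take $V_{n+1}$ inside $B(F,\rho_n)$; then the $V_n$ are pairwise disjoint and still accumulate on $F$. (Alternatively, one can drop disjointness altogether and instead use the standard fact that comeagerness is a local property, i.e.\ a set whose trace on each element of an open cover is residual in that element is residual in the whole space.) With that adjustment the argument is complete and matches the paper's proof in substance while being more explicit about the Baire step.
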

\begin{proof}
Theorem \ref{thm:ptpv} states that arbitrarily close to $F$ there is a domain with a persistent tangency associated with the continuation of some sectionally dissipative saddle $p_1$. Consider a sequence of such domains that approaches $F$ (saddles $p_1$ may be different for different domains). In each domain we apply Theorem~\ref{thm:NI} to the persistent tangency associated with~$p_1$, which yields the genericity of maps with unstable attractors. Thus we can take the union of these domains as $U$.   
\end{proof}

\subsubsection{Infinitely many sinks and instability}

  It must be mentioned that some authors define the \emph{Newhouse phenomenon} as a generic coexistence of infinitely many sinks, not assuming any persistent tangency. Persistent tangencies for sectionally dissipative saddles imply generic coexistence of infinitely many sinks, but  it is not known whether the converse is true.  S.~Crovisier, E.~Pujals and M.~Sambarino have announced (see~\cite[Cor. 4.5]{Crovisier14}) that, at least in $C^1$, locally generic coexistence of infinitely many sinks implies the density of homoclinic tangencies, which seems to be pretty close to having a persistent tangency.

\begin{thm}[Crovisier, Pujals, Sambarino]\label{thm:CPS}
For any open set $V\subset\Diff^1(M)$, $M$ being compact, the following properties are equivalent:
  \begin{itemize}
    \item{Baire-generic diffeomorphisms in $V$ have infinitely many sinks,}
    \item{densely in $V$ there exist diffeomorphisms exhibiting homoclinic tangencies associated with sectionally dissipative periodic points.}
  \end{itemize}
\end{thm}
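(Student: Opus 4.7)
The plan is to prove the two implications separately. The implication (density of sectionally dissipative tangencies $\Rightarrow$ generic infinitude of sinks) is the straightforward one, handled by a Baire category argument in the spirit of Proposition~\ref{prop:suff} and Corollary~\ref{cor:mger}. Let $V_n\subset V$ denote the $C^1$-open set of diffeomorphisms possessing at least $n$ hyperbolic periodic sinks (openness follows from the persistence of hyperbolic sinks). The goal is to show that $V_n$ is dense in $V$ for every $n$, so that $\bigcap_n V_n$ is residual. I would induct on $n$: given any nonempty open $W\subset V$, the inductive hypothesis makes $W\cap V_n$ a nonempty open subset of $V$, and the density hypothesis then supplies a diffeomorphism $F_0\in W\cap V_n$ exhibiting a homoclinic tangency at a sectionally dissipative saddle. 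The classical unfolding of such a tangency (the Palis--Takens / Palis--Viana mechanism underlying Theorem~\ref{thm:ptpv}) produces a brand-new sink by an arbitrarily small $C^1$-perturbation of $F_0$, and by making this perturbation small enough the $n$ preexisting hyperbolic sinks persist. The perturbed map then lies in $W\cap V_{n+1}$, completing the induction.

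For the converse implication I would argue by contrapositive. Suppose there is a nonempty $C^1$-open $V'\subset V$ in which no diffeomorphism is a $C^1$-limit of maps with a homoclinic tangency at a sectionally dissipative periodic saddle; the aim is to show that a $C^1$-generic $F\in V'$ has only finitely many sinks, contradicting the hypothesis. Every $F\in V'$ then lies in the ``$C^1$-far from sectionally dissipative tangencies'' regime, to which the $C^1$-generic theory of Crovisier and Pujals applies. The plan is to invoke their machinery to conclude that, in the absence of nearby sectionally dissipative tangencies, any accumulation set of periodic sinks of a generic $F\in V'$ must be contained in a compact invariant set carrying a dominated splitting $E^{cs}\oplus E^u$ in which $E^{cs}$ is sectionally volume-contracting. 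A Pujals--Sambarino-type argument in this dominated, volume-contracting setting then bounds the number of sinks uniformly on a residual subset of $V'$, contradicting the generic coexistence of infinitely many sinks inherited from $V$.

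The principal obstacle is clearly the converse direction. It hinges on the full force of the $C^1$-perturbation technology: generic Kupka--Smale, the Pugh--Hayashi-type connecting lemmas, the Crovisier--Pujals theorem identifying homoclinic tangencies (and, in higher dimensions, heterodimensional cycles, which do not occur in the sectionally dissipative regime) as the only obstruction to dominated splittings, and the Pujals--Sambarino mechanism bounding the number of sinks in the presence of a volume-contracting dominated splitting. Packaging these ingredients into a genuine contradiction with the generic infinitude of sinks is the entire substance of the announced theorem of Crovisier, Pujals and Sambarino; by comparison, the forward direction only requires the unfolding of a single sectionally dissipative tangency together with the elementary induction above.
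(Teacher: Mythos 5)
The paper does not prove Theorem~\ref{thm:CPS}: it is quoted as an \emph{announced} result of Crovisier, Pujals and Sambarino, with a citation to~\cite[Cor.~4.5]{Crovisier14}, and no argument is given. There is therefore no proof in the paper to compare yours against.

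On the merits of your sketch: the forward implication is the standard Newhouse Baire-category argument and matches the scheme the paper itself uses in Section~\ref{sec:thmNI} for Theorem~\ref{thm:NI}, with one caveat worth making explicit. The unfolding that produces a sink near the tangency requires at least $C^2$ regularity, so from a $C^1$-dense set of diffeomorphisms with tangencies you should first pass, as the paper observes just before Corollary~\ref{cor:dense_instability}, to a $C^1$-close smooth diffeomorphism that still has a homoclinic tangency for (the continuation of) the same sectionally dissipative saddle, and only then unfold. Your converse direction, by contrast, is not a proof: it enumerates the machinery one expects to need (the generic Crovisier--Pujals dichotomy between dominated splittings and tangencies, a Pujals--Sambarino-type finiteness mechanism under a volume-contracting dominated splitting) without showing how they combine, and you acknowledge as much. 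Since the theorem itself is announced and, as far as the present paper is concerned, unproved, that candor is appropriate; but the sketch should not be read as establishing the equivalence.
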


Since homoclinic tangencies in this theorem appear, in general, for different sectionally dissipative saddles (we do not know whether those are heteroclinically related or not), Theorem~\ref{thm:NI} does not yield the local genericity of diffeomorphisms with unstable Milnor attractors. However, since a $C^1$-diffeomorphism with a homoclinic tangency can be approximated by a $C^2$-diffeomorphism with a tangency related to the continuation of the same saddle, a straightforward application of Corollary~\ref{cor:approx} provides the following statement.

\begin{cor}\label{cor:dense_instability}
Suppose $M$ is a smooth compact manifold and there is an open set $V\subset\Diff^1(M)$ such that a topologically generic diffeomorphism in $V$ has infinitely many sinks. Then densely in $V$ diffeomorphisms have Lyapunov unstable Milnor attractors. 
\end{cor}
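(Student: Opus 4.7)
The plan is a three-step reduction. First, invoke Theorem~\ref{thm:CPS}: since a topologically generic diffeomorphism in $V$ has infinitely many sinks, the set of diffeomorphisms $G \in V$ exhibiting a homoclinic tangency associated with some sectionally dissipative periodic saddle $p(G)$ is $C^1$-dense in $V$. Fix such a $G \in V$ and an arbitrary $C^1$-neighborhood $\mathcal W(G) \subset V$; it is enough to produce a diffeomorphism in $\mathcal W(G)$ with a Lyapunov unstable Milnor attractor.

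The second step, which I expect to be the main technical obstacle, is a $C^1$-to-$C^2$ smoothing that keeps the homoclinic tangency alive. Since $\Diff^2(M)$ is $C^1$-dense in $\Diff^1(M)$, choose a $C^2$-diffeomorphism $G'$ very $C^1$-close to $G$. Hyperbolicity and sectional dissipativity are both open conditions on a periodic orbit and the eigenvalues of $dG'^{\per(p')}(p')$, so the continuation $p' := p(G')$ is still a sectionally dissipative hyperbolic saddle. The tangency itself may be destroyed by smoothing, but compact pieces of $W^u(p')$ and $W^s(p')$ are $C^1$-close to the corresponding pieces of $W^u(p(G))$ and $W^s(p(G))$ where the original tangency lay. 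A further arbitrarily small $C^2$ bump perturbation of $G'$, supported in a small ball near a point where these two arcs are quasi-tangent, can then push a piece of $W^u(p')$ back into non-transverse contact with $W^s(p')$. This produces a $C^2$-diffeomorphism $\tilde G \in \mathcal W(G)$ with a genuine homoclinic tangency at the sectionally dissipative saddle $p(\tilde G)$. The care needed here is to arrange the perturbation so that the tangency is actually realized, occurs at the continuation of $p'$, and is small enough in $C^1$ to keep $\tilde G$ inside $\mathcal W(G)$ — a routine transversality argument in the spirit of the constructions in \cite{PT}.

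Finally, apply Corollary~\ref{cor:approx} to $\tilde G$: there is a $C^2$-open set $U$ arbitrarily $C^2$-close to $\tilde G$ in which topologically generic diffeomorphisms carry Lyapunov unstable Milnor attractors. In particular, $U$ contains some $H$ with this property. By choosing $G'$, the bump perturbation, and the $C^2$-distance from $\tilde G$ to $U$ sufficiently small at each stage, one ensures $H \in \mathcal W(G) \subset V$. This yields the required $C^1$-density in $V$ of diffeomorphisms with Lyapunov unstable Milnor attractors.
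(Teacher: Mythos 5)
Your proposal is correct and follows exactly the paper's route: invoke Theorem~\ref{thm:CPS} to get a $C^1$-dense set of diffeomorphisms with sectionally dissipative tangencies, $C^1$-approximate each by a $C^2$-diffeomorphism for which a tangency at the continuation of the same saddle is recreated, and then feed the result into Corollary~\ref{cor:approx}. The smoothing-and-bump step that you single out as the main obstacle is precisely the clause the paper invokes without elaboration (``since a $C^1$-diffeomorphism with a homoclinic tangency can be approximated by a $C^2$-diffeomorphism with a tangency related to the continuation of the same saddle''), so you have simply made the paper's argument a bit more explicit rather than changed it.
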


To sum up, the positive answer to the following problem seems rather plausible.
\begin{prob}
Is it true that on compact manifolds the local Baire-generic coexistence of infinitely many sinks is always accompanied by the generic instability of Milnor attractors? 
\end{prob}

Recall that the subset of the phase space is called \emph{asymptotically stable} if it is Lyapunov stable and attracts every point in some its neighborhood. The following argument due to A.~Okunev shows that coexistence of infinitely many sinks implies, at least, that the attractor lacks asymptotic stability.  

\begin{thm}[A.~Okunev] If the diffeomorphism $F$ has infinitely many sinks, then its Milnor attractor is not asymptotically stable.
\end{thm}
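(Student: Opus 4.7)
The plan is a proof by contradiction: assume that $A_M$ is asymptotically stable, extract the resulting trap, and exhibit a periodic orbit inside the trap that does not belong to $A_M$. By the standard characterization of asymptotic stability there is an open neighborhood $U\supset A_M$ with $\overline{F(U)}\subset U$ and $A_M=\bigcap_{n\ge 0}F^n(U)$; compactness of $\overline U$ makes the nested sets $F^n(\overline U)$ shrink to $A_M$ in the Hausdorff metric. A short argument then gives the identity $\Omega(F)\cap U=A_M$: the inclusion $A_M\subset\Omega(F)$ is already in the proof of Proposition~\ref{prop:suff}, and if $y\in U\cap\Omega(F)$ with $y_k\to y$, $n_k\to\infty$ satisfying $F^{n_k}(y_k)\to y$, then eventually $y_k\in U$, so $F^{n_k}(y_k)\in F^{n_k}(U)$, which is Hausdorff-close to $A_M$, forcing $y\in A_M$. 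In particular, every periodic point of $F$ lying in $U$ must belong to $A_M$. Each sink $\gamma_j$ has a positive-measure basin, so $\gamma_j\in A_M\subset U$; by compactness of $M$ the sinks accumulate at some $x_0\in A_M\subset U$, and $x_0$ cannot lie in any single basin $\tilde B(\gamma_K)=\{y:\omega(y)=O(\gamma_K)\}$, because that open basin would then contain the nearby sinks, contradicting disjointness of distinct basins.

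The heart of the argument is to exhibit a periodic point $z$ of $F$, arbitrarily close to $x_0$, with $z\notin A_M$; this contradicts the identity $\Omega(F)\cap U=A_M$ since such a $z$ is periodic (hence nonwandering) and lies in $U$ (being close to $x_0\in U$). I would produce $z$ via a local Lefschetz--Hopf index count for $F^T$ on a small closed disk $D$ around $x_0$: if the periods of the sinks admit a bounded subsequence, pass to a subsequence of common period $T$; otherwise exhaust over $T\to\infty$. Each sink of period dividing $T$ lying in $D$ contributes fixed-point index $+1$ to $F^T$, while the Euler-characteristic constraint on $D$ forces non-sink $F^T$-fixed points to appear in $D$ and, as the sinks cluster at $x_0$, to accumulate at $x_0$ themselves. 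Being neither sinks nor $\omega$-limits of any positive-measure set, these non-sink periodic orbits lie outside $A_M$, giving the required $z$.

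The main obstacle is carrying out this index step in full generality --- arbitrary dimension, unbounded sink periods, no hyperbolicity hypothesis on the sinks --- in particular because the Lefschetz--Hopf formula demands care with boundary conventions when $F^T$ does not self-map $D$. A dimension-free realization should bypass index theory entirely by exploiting the closed $F$-invariant set $S=U\setminus\bigsqcup_j\tilde B(\gamma_j)$: a small connected neighborhood of $x_0$ cannot be covered by the infinitely many pairwise disjoint nonempty open basins, so $S$ is nonempty and contains $x_0$; the $F$-invariance of $S$, together with the minimality of the Milnor attractor and the accumulation of basins at $x_0$, should force $S$ to carry a recurrent orbit lying strictly outside $A_M$, again violating $\Omega(F)\cap U=A_M$.
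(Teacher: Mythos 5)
Your proof attempt takes a genuinely different route from the paper, but it has two real gaps that prevent it from going through.

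First, the Lefschetz--Hopf step. You would need to force non-sink fixed points of $F^T$ to appear near the accumulation point $x_0$ of sinks, but on a small disk $D$ that $F^T$ does not map into itself there is no ``Euler-characteristic constraint'' on the sum of fixed-point indices: the fixed-point index of $F^T$ relative to $D$ equals the Brouwer degree of $\mathrm{id}-F^T$ on $\partial D$, and this integer can be anything. Arbitrarily many sinks of index $+1$ inside $D$ are perfectly compatible with no other fixed points at all; the constraint you want only appears when $F^T(\overline D)\subset D$, which is exactly what you do not have at an accumulation point of sinks. You flagged this difficulty, but it is not a technicality to be smoothed over: it is the central missing step, and the fallback via the invariant set $S=U\setminus\bigsqcup_j\tilde B(\gamma_j)$ is left as a ``should'' and is not an argument.

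Second, and independently of the index issue: even if you produce a non-sink periodic point $z$ near $x_0$, the claim that $z\notin A_M$ does not follow from $z$ ``being neither a sink nor the $\omega$-limit of any positive-measure set.'' The Milnor attractor is the smallest \emph{closed} set containing $\omega(x)$ for almost every $x$; it is not the union of such $\omega$-limit sets. A point can lie in $A_M$ as a limit of sinks without being in any $\omega$-limit set --- indeed your $x_0$ has precisely this property. Periodic saddles near $x_0$ may perfectly well lie in $A_M$ (for instance when they sit in a homoclinic class contained in $A_M$), so exhibiting one does not contradict $\Omega(F)\cap U=A_M$.

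The paper's proof avoids both pitfalls by working with the basins of the sinks rather than with fixed-point indices. If the boundary $\partial B$ of some sink's basin meets $A_M$, Lyapunov instability is immediate, because points of $B$ near $\partial B\cap A_M$ are forced through a fixed wandering annulus $U'\setminus\overline{F(U')}$ disjoint from a neighborhood of $A_M$. Otherwise each $\partial B_j$ is a closed invariant set disjoint from $A_M$, so its points are \emph{never} attracted to $A_M$; choosing $b_j\in\partial B_j$ on a short segment from $\gamma_j$ to a limit point $z$ of the sinks gives $b_j\to z\in A_M$ with $b_j$ not attracted to $A_M$, which directly negates asymptotic stability. This is shorter, dimension-free, and needs no periodic point at all. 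Your trapping-region reduction $\Omega(F)\cap U=A_M$ is correct and could in principle be a useful auxiliary fact, but it does not by itself substitute for finding the non-attracted points that the basin boundaries supply for free.
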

\begin{proof}
Consider a sink $\gamma$ of $F$. Denote by $B$ the basin of attraction of $O(\gamma)$. Its boundary $\partial{B}$ is a closed invariant set. If it has a nonempty intersection with the attractor, then the attractor is Lyapunov unstable. Indeed, arbitrarily close to a point $a\in \partial{B}\cap A_M$ one can take a point $b\in B$ whose forward orbit inevitably passes through a wandering domain $U\setminus \overline{F(U)}$, \ $U$ being some fixed dissipative neighborhood of $O(\gamma)$.

Suppose now that for every sink of the diffeomorphism $F$ the boundary of the basin of this sink is separated from the attractor. Note then that no point in this boundary is attracted to $A_M$.
Take a sequence $(\gamma_j)_{j\in\nn}$ of sinks that converges to some point $z\in M$ (as always, we assume $M$ to be compact). Since the attractor is closed, $z$ belongs to $A_M$. For every $\gamma_j$ denote by $B_j$ the basin of $O(\gamma_j)$. There is also a sequence of points $b_j\in\partial{B_j}$  that converges to the same point $z$. Indeed, for $\gamma_j$ close to $z$ one can take a segment that connects $z$ with $\gamma_j$ and find a point of $\partial{B_j}$ inside this segment. Thus, we have obtained a sequence $(b_j)$ that converges to $z\in A_M$ but consists of points that are not attracted to $A_M$, which is in contradiction with asymptotic stability. 
\end{proof}


\subsection{Reduction of Theorem \ref{thm:NI} to the capture lemma}\label{sec:thmNI}



\begin{lem}[capture lemma]\label{lem:capture}
Let $F\in{\rm Diff}^r(M), \; r\ge 1,$ have a sectionally dissipative periodic hyperbolic saddle $p = p(F)$ with a homoclinic tangency. Then arbitrary $C^r$-close to $F$ there is a diffeomorphism $G$ for which $W^u(p(G))$ intersects the basin of a periodic sink. 
\end{lem}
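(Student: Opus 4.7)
The plan is to exploit the homoclinic tangency to create, by an arbitrarily small $C^r$-perturbation of $F$ localized near the tangency orbit, a periodic sink $\gamma$ whose basin is met by $W^u(p(G))$.

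First I would fix a tangency point $q\in W^u(O(p))\cap W^s(O(p))$ and work in local coordinates near $q$ in which $W^u_{loc}(O(p))$ and $W^s_{loc}(O(p))$ appear as graphs meeting at $q$ with a generic quadratic contact. For $N$ equal to a transit time from a neighborhood of $p$ to $q$ and back, plus a large multiple of $\per(p)$, the first-return map $F^N$ on a small box $B\ni q$ is the composition of the linearization near $p$ with a fold coming from the tangency. This is exactly the geometric setting of the classical Newhouse--Palis--Takens construction of a H\'enon-like family, extended to higher dimension under sectional dissipativity by Palis and Viana.

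Next I would embed $F$ in a one-parameter $C^r$-family $(F_\mu)$ that unfolds the tangency and invoke the standard renormalization: after rescaling coordinates on $B$ and the parameter $\mu$, the maps $F_\mu^N$ converge in the $C^r$ topology to a H\'enon-like family $(x,y)\mapsto (1+y-ax^{2},\,bx)$ on a two-dimensional section, together with a uniform contraction along the remaining stable directions. By Remark~\ref{rem:volume}, the sectional dissipativity of $p$ becomes exactly the condition $|b|<1$; therefore, for an open set of values of $a$, this limiting family possesses a hyperbolic attracting periodic orbit, which persists as a periodic sink of $F_\mu^N$ for nearby parameters. Pulling back, one obtains an arbitrarily $C^r$-close perturbation $G_0$ of $F$ with a hyperbolic periodic sink $\gamma\subset B$.

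To conclude, observe that $q\in W^u(O(p))$, so $W^u(p(G_0))$ still contains a small piece $\alpha$ near the perturbed tangency; applying $G_0^{kN}$ for large $k$ folds $\alpha$ inside $B$ according to the H\'enon-like return dynamics, producing pieces of $W^u(p(G_0))$ that accumulate on the non-wandering set of the return map and, in particular, enter the open basin of $\gamma$. If this accumulation requires a further adjustment, an additional arbitrarily small $C^r$-perturbation supported in a small open set disjoint from $O(p(G_0))$, from $\gamma$, and from the basin of $\gamma$ can be used to bend a chosen piece of $W^u(p(G_0))$ into the basin, giving the required $G$. The main obstacle is the middle step: showing that the renormalization yields a H\'enon-like limit whose Jacobian parameter is precisely governed by sectional dissipativity, and that the attracting periodic orbit of the limit survives as a genuine hyperbolic sink of $F_\mu^N$ for the unrescaled map. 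In dimension two this is classical work of Newhouse; in higher dimension one has to carefully couple the two-dimensional H\'enon-like dynamics with the contraction along the strong stable directions, which is exactly where sectional dissipativity (as opposed to mere dissipativity) is essential. A secondary, more routine difficulty is verifying that $W^u(p(G))$ really does reach the basin of the newly created sink, handled either via the structural behavior of the renormalized dynamics or by a further localized perturbation.
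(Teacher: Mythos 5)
Your proposal correctly identifies the renormalization / H\'enon-like machinery for creating a periodic sink near the unfolded tangency, and this matches the first half of the paper's argument (the ``model example'' and the ``general case for extremely dissipative saddles''). However, the step you flag as ``a secondary, more routine difficulty'' --- verifying that $W^u(p(G))$ actually reaches the basin of the new sink --- is in fact the \emph{entire content} of the capture lemma, and your proposal has a genuine gap exactly there.

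Your two suggested ways of closing that gap both fail. First, the claim that iterates of the unstable arc $\alpha$ ``accumulate on the non-wandering set of the return map and, in particular, enter the open basin of $\gamma$'' is unjustified: $W^u(p)$ accumulates on the homoclinic class of $p$, but the sink $\gamma$ is generally \emph{not} in $H(p)$, so there is no a priori reason for $W^u(p)$ to approach it. Second, the idea of a ``further arbitrarily small $C^r$-perturbation supported in a small open set disjoint from $O(p(G_0))$, from $\gamma$, and from the basin of $\gamma$'' that ``bends a chosen piece of $W^u(p(G_0))$ into the basin'' is not a proof but a restatement of what must be shown. The basin of $\gamma$ has microscopic size (of order $\sigma^{-2n}$ in the rescaled coordinates), so a $C^r$-small perturbation supported away from that basin cannot in any obvious way move a given arc of $W^u$ into it. This is precisely the gap the paper points out in Remark~\ref{rem:newhouse_gap} concerning Newhouse's claimed two-dimensional proof of a similar statement: the direct capture argument goes through only when the saddle satisfies the stronger inequality $|\lambda\sigma^2|<1$ (the ``extremely dissipative'' case), and the paper exhibits an explicit example ($\lambda^2\sigma^3=1$, $\sigma>10$) where the natural candidate point on $W^u(p)$ escapes the basin after one further wind. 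For a merely sectionally dissipative saddle the paper must take a substantially longer route: after renormalization it identifies an \emph{extremely dissipative} saddle $\tilde p_{n,\nu}$ with eigenvalues near $4$ and $0$, proves (via Palis--Viana's machinery and a chain of auxiliary propositions about upper/lower tangencies, transverse homoclinic orbits, and the sign condition~(\ref{eq:pvcond})) that this new saddle is heteroclinically related to the continuation of $p$ and itself exhibits a homoclinic tangency, applies the extremely dissipative case to $\tilde p_{n,\nu}$, and then transfers the capture back to $p$ via the $\lambda$-lemma. None of this is present in your proposal.
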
 

\begin{rem}\label{rem:newhouse_gap}
A two-dimensional version of this lemma can be found in the paper by S.~Newhouse \cite[Lemma 2.2]{N_new} but there is a gap in the proof. Namely, that proof would work only if the eigenvalues $|\lambda|<1<|\sigma|$ of the saddle $p$ satisfied the inequality $|\lambda\sigma^2|<1.$ 
\end{rem} 

\begin{rem}
As J.~C.~Tatjer kindly informed the author, the two-dimensional version of the capture lemma is implied by the main result of the work \cite{TS}(see Thm 5.8 there). Although it seems that the argument of \cite{TS} can be generalized to the general case, below we use a different approach based on the paper \cite{PV} by J.~Palis and M.~Viana. 
\end{rem}

Now we will reduce Theorem~\ref{thm:NI} to the capture lemma, which we will prove in the next section.

Assume as in Theorem~\ref{thm:NI} that there is an open domain $U\subset\Diff^r(M), \; r\ge 1,$ with a persistent tangency associated with a sectionally dissipative periodic saddle $p$ (see Definition~\ref{def:tan_p}) and take a diffeomorphism $F\in U$ with a tangency related to $p(F)$. According to Prop.~1 of~\cite{N74}, when unfolding a homoclinic tangency by a $C^r$-small perturbation, we can create a hyperbolic periodic sink that passes arbitrarily close to the point where the tangency was.\footnote{J.~Palis and M.~Viana prove this for a generic unfolding of a tangency in~\cite{PV} (see Remark~6.1), and we use a similar but a little simpler argument in the proof of the capture lemma, see Section~\ref{sect:gcfeds}.} Since finite segments of orbits depend continuously on the initial point and the mapping, this proposition yields that for any $\delta$ we can create a sink passing $\delta$-close to the hyperbolic continuation of the saddle~$p$. Indeed, before the perturbation the point of tangency $q$ was in the stable manifold of $p(F)$, and therefore there were a neighborhood $V$ of $q$ and an integer $n$ such that $F^n(V)$ lied in the (open) $\delta$-neighborhood of $p(F)$. The same is true for any diffeomorphism $G$ sufficiently close to $F$: $p(G)$ is close to $p(F)$, $G^n(V)$ is close to $F^n(V)$, and therefore $G^n(V)$ is in the $\delta$-neighborhood of $p(G)$. So, if $G$ has a sink in $V$, this sink passes $\delta$-close to $p(G)$. 

At this point we can use a standard argument due to S. Newhouse. Namely, for each $n\in\nn$ denote by $U_n$ the subset of $U$ that consists of diffeomorphisms $F$ with a sink at a distance less than $\frac{1}{n}$ from the saddle $p(F)$. These sets possess the following properties.
\begin{itemize}\setlength\itemsep{-0.1em}
\item{Each $U_n$ is $C^r$-open, because hyperbolic sinks have hyperbolic continuations.}

\item{Each $U_n$ is $C^r$-dense in $U$.

Indeed, we can take any $F\in U$, slightly perturb it to obtain a homoclinic tangency, then make another $C^r$-perturbation to unfold this tangency and create a sink that passes $\frac 1 n$-close to the saddle $p(\cdot)$, which yields a diffeomorphism in~$U_n$.}  
\end{itemize}

Since the sets $U_n$ are $C^r$-open and $C^r$-dense in $U\subset\Diff^r(M)$, the set $R = \bigcap_n U_n$ is a residual subset of $U$. For every $F\in R$ one can find a sink in any neighborhood of $p(F)$, which implies that there is a sequence of sinks that accumulates to this saddle.

Suppose that Lemma~\ref{lem:capture} is true. Then the diffeomorphisms $F$ for which $W^u(p(F))$ intersects a basin of some sink form a $C^r$-dense subset $C$ in $U$. Moreover, this subset is open due to the continuous dependence of local stable and unstable manifolds on the mapping. 

The subset $R\cap C$ is residual in $U$, and any diffeomorphism in this subset satisfies both assumptions of Proposition~\ref{prop:suff}: it has sinks accumulating to a saddle whose unstable manifold intersects the basin of one of the sinks. Therefore, any such diffeomorphism has an unstable Milnor attractor. The proof of Theorem~\ref{thm:NI} modulo Lemma~\ref{lem:capture} is complete.

\section{Proof of the capture lemma}

\subsection{Model example}
In this section we discuss the simplest two-dimensional example of unfolding of a homoclinic tangency. This example shows, without introducing technical difficulties of the general case, how the sink emerges near the point where the tangency was and how a part of the unstable manifold of the saddle is captured by that sink.   

\subsubsection*{Description of the family}
Take two numbers $\lambda, \s\in \rr$ such that $0<\lambda<1<\s$ and $\lambda\s<1$ and consider a plane $\rr^2$ with coordinates $x, y$ and a one-parameter family $\{F_\mu\}_{\mu\in[-\e, \e]}$ of $C^\infty$-diffeomorphisms of that plane such that
\begin{itemize}\setlength\itemsep{-0.1em}
\item{for any $\mu$, the restriction of $F_\mu$ to the rectangle $R_0 = [-2,2]^2$ has the form
\begin{equation}\label{eq:fn}
F_\mu|_{R_0}\colon (x, y)\mapsto(\lambda x, \s y);
\end{equation}
}
\item{there is a small rectangle $R_1$ centered at the point $r = (0, 1)$ and an integer $N\in \nn$ such that for any $\mu$ the restriction $\left.F^N_\mu\right|_{R_1}$ has the form
\begin{equation}\label{eq:fN}
\left.F^N_\mu\right|_{R_1}\colon (x, y)\mapsto \left(y, \;\mu - x + (y-1)^2\right).
\end{equation}
}
\end{itemize} 

Obviously, for any $F_\mu$ there is a fixed dissipative saddle at the origin. We will denote this saddle $p$. The segments $[-2, 2]\times \{0\}$ and $\{0\}\times[-2, 2]$ are local stable and unstable manifolds of $p$ respectively. According to the second condition on the family, $F^N_\mu$ maps the segment $R_1\cap Oy \subset W^u_{loc}(p, F_\mu)$ into an arc $\Gamma_\mu$ of a parabola which shifts along the $y$-axis when we change the parameter, and when $\mu = 0$, there is a homoclinic tangency associated with $p$ at the point $q = (1, 0)$ which is the vertex of $\Gamma_0$ (Figure~\ref{pic}).

\begin{figure}
  \begin{center}

  \begin{tikzpicture}[scale=5]
    \coordinate (x) at (1.5, 0);
    \coordinate (y) at (0, 1.35);
    
    \draw    (y) node[above] {$y$} -- (0,0) coordinate (p) node[below left] {$p$} -- (0,-0.35);
    \filldraw [black] (p) circle (0.3pt);
    \draw    (-0.5, 0) -- (x) node[right] {$x$};
    
    \filldraw [black] (1,0) coordinate (q) node[below, align = center] {$q$\\{\footnotesize $(1,0)$}} circle (0.3pt);
    \filldraw [black] (0,1) coordinate (r) node[left, align = center]  {{\scriptsize $(0, 1)$} $r$} circle (0.3pt);

    \draw[>-<] (-0.25, 0) -- (0.25, 0);
    \draw[<->] (0, -0.25) -- (0, 0.25);
    \node[below] at (0.25, 0) {\footnotesize $W^s_{loc}(p)$};
    \node[left]  at (0, 0.25) {\footnotesize $W^u_{loc}(p)$};
    
    \begin{scope}[yshift = 1cm, scale = 0.25]
     \coordinate (s) at (0, 0);
     \coordinate (a) at (-1, 1);
     \coordinate (b) at (1, 1);
     \coordinate (c) at (1, -1);
     \coordinate (d) at (-1, -1);
    
     \path[draw, dotted] (a) -- (b) -- (c) -- (d) -- cycle; 
     \node[below right] at (-1, 1) {$R_1$};
    \end{scope}

    \begin{scope}[xshift = 1cm, yshift = 0.15cm, scale = 0.2]
     \coordinate (s) at (0, 0);
     \coordinate (a) at (-1, 0.5);
     \coordinate (b) at (1, 0.5);
     \coordinate (c) at (1, -0.5);
     \coordinate (d) at (-1,-0.5);
     \filldraw [black] (s) node[right] {$s_n$} circle (0.5pt);
     \path[draw, thick] (a) -- (b) -- (c) -- (d) -- cycle; 
     \node[right] at (1, 0) {$D_n$};
    \end{scope}

    \filldraw [black] (1,0.15) circle (0.3pt);

    \begin{scope}[xshift = 0.10cm, yshift = 1cm, xscale = 0.05, yscale = 0.3]
     \coordinate (s) at (0, 0);
     \coordinate (a) at (-1, 0.5);
     \coordinate (b) at (1, 0.5);
     \coordinate (c) at (1, -0.5);
     \coordinate (d) at (-1,-0.5);
     \path[draw, thick, color = blue] (a) -- (b) -- (c) -- (d) -- cycle; 
    \end{scope}

    \draw[thick, ->, color = blue] (0.7, 0.15) to [out = 180, in = -90] (0.10, 0.7);
    \node[above right] at (0.3, 0.33) {$F^n_\mu$};

    \begin{scope}[xshift = 0.05cm, yshift = 0.03cm]
    \draw[thick, ->, color = green] (0.25, 1.1) {[rounded corners] to [out = 10, in = 85] (1.05, 0.35)};
    \node[above right] at (0.85, 0.95) {$F^N_\mu$};
    \end{scope}

    \begin{scope}[xshift = 1cm, yshift = 0.185cm, xscale = 0.15, yscale = 0.1]
     
     \coordinate (a) at (-1, 0.5);
     \coordinate (b) at (1, 0.5);
     \coordinate (c) at (1, -0.5);
     \coordinate (d) at (-1,-0.5);
   
     \draw[thick, color = green] (a) parabola[bend pos = 0.5] bend +(0,-0.4) (b) -- (c)  parabola [bend pos = 0.5] bend +(0,-0.4) (d) -- cycle;
     \draw[thick, color = magenta] (-2, 2) parabola[bend pos = 0.5] bend +(0,-1.6) (2, 2) node[above, right, color = black] {$\Gamma_\mu$}; 
    \end{scope}

  \end{tikzpicture}
  \caption{Model example.}\label{pic}
  \end{center}
\end{figure}
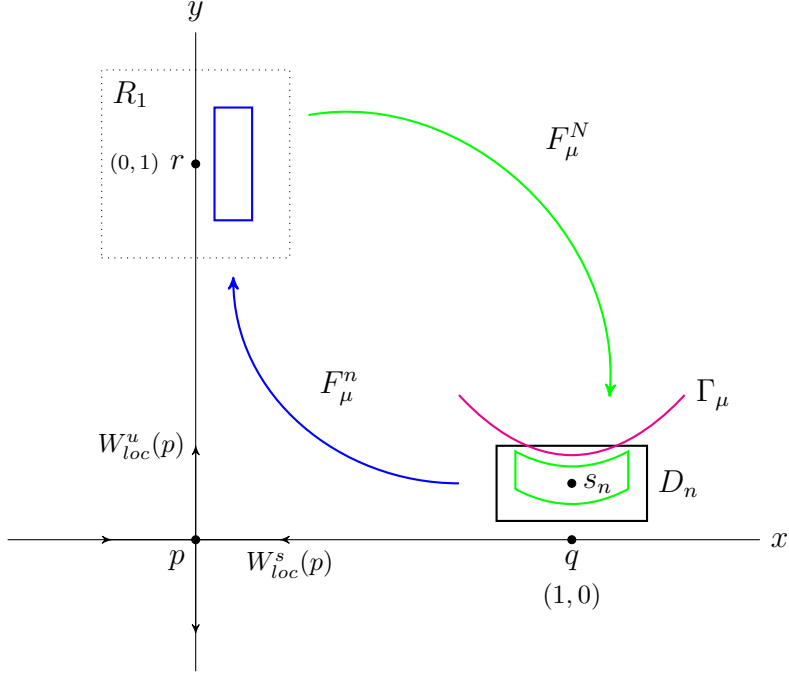

When $\mu$ is small and positive, $\Gamma_\mu$ is shifted upwards and does not intersect the $x$-axis. Take a small rectangle $R$ (with sides parallel to the axes) that lies between the point $q$ and $\Gamma_\mu$. Under the iterates of $F_\mu$ this rectangle will be contracted along $x$-axis and stretched along $y$-axis. If the center of $R$ is at $(1, \s^{-n})$, then the $n$-th image of this rectangle is a thin and long rectangle near the point~$(0, 1)$. The latter rectangle, if it is contained in $R_1$, is mapped by $F^N_\mu$ into a curvilinear figure that lies below $\Gamma_\mu$ and may intersect $R$. The idea is that by adjusting $\mu$ and the size of the rectangle we can assure that  $F^{n+N}_\mu$ maps it into its interior and therefore there is a periodic point inside the rectangle $R$.

\subsubsection*{The sink}
Firstly, we will show how to obtain a sink ``manually''. This will give some motivation to the renormalization technique below.

Suppose that we are looking for a periodic point $s = (x_s, y_s)$ of $F_\mu$ such that $s$ lies in the vicinity of $q$ and goes to the vicinity of the point $r$ after $n$ iterations, and then returns to its original position after another $N$ iterations. If it exists, then $F^n_\mu(x_s, y_s) = (\lambda^nx_s, \s^nx_s)$ lies in $R_1$ and substituting this into (\ref{eq:fN}) yields the analytic expression for $F^{n+N}_\mu(\cdot) = \left(F^N_\mu\circ F^n_\mu\right)(\cdot)$ in a neighborhood of $s$:
\begin{equation}\label{eq:fnN}
F^{n+N}_\mu(x, y) = (\s^ny, \; \mu - \lambda^nx + (\s^ny-1)^2).
\end{equation}
Then we can try to find $s$ by solving the equation
$$(x_s, y_s) = (\s^ny_s, \; \mu - \lambda^nx_s + (\s^ny_s-1)^2).$$

Let us take $x_s = 1$. Then $y_s = \s^{-n}$, and we have to set $\mu = \mu_n = \s^{-n}+\lambda^n$. We have obtained the solution $(x_s,  y_s, \mu) = (1,\; \s^{-n},\; \s^{-n}+\lambda^n)$. Note that $s \to q$ and $\mu_n\to 0$ as $n\to\infty$. Now it is easy to verify that for sufficiently large values of $n$ we have $F^n_{\mu_n}(x_s, y_s)\in R_1$, which implies that $F^{n+N}_{\mu_n}$ restricted to the neighborhood of $s$ is given by expression~(\ref{eq:fnN}). Thus, the point $s$ is indeed a periodic point for the given value of $\mu$.

Using (\ref{eq:fnN}) we can write the Jacoby matrix of $F^{n+N}_{\mu_n}$ in the neighborhood of $s$ explicitly:
\begin{equation}\label{eq:jacoby}
dF^{n+N}_{\mu_n}=
\left(
  \begin{array}{cc}
  0 & \s^n \\
  -\lambda^n & 2\s^n(\s^ny - 1)\\
  \end{array}
\right)
\end{equation}

The matrix of $dF^{n+N}_{\mu_n}(s)$ can be obtained by a substitution $y = y_s = \s^{-n}$. The eigenvalues of this matrix are $\pm i(\lambda\s)^{n/2}$. Since $\lambda\s < 1$, the moduli of eigenvalues are smaller than one and the point $s$ is a periodic sink.

To sum up, for any $n$ we have obtained a parameter value $\mu_n$ such that the map $F^{n+N}_{\mu_n}$ has a sink $s_n$:
$$ s_n = (1, \s^{-n}), \ \mu_n = \lambda^n + \s^{-n}.$$

\subsubsection*{Renormalization}
 Now we can use an argument utilizing a renormalization technique as in \cite[\S 3.4]{PT}. Consider the following $n$-dependent affine change of coordinates:
\begin{equation}\label{eq:newcoords}
 H_n\colon (x, y)\mapsto (X, Y) = (\sigma^n(x - 1),\ \sigma^{2n}(y - \sigma^{-n})).
\end{equation}
The origin of the new $X,Y$-coordinates is at the point $(1, \s^{-n})$ in the $x,y$-coordinates, and the square $K = [-1,1]^2$ in new coordinates corresponds to the following rectangle $D_n$:
$$
D_n = H_n^{-1}(K) = \{ (x,y)\mid |x - 1| \le \s^{-n}, \ |y - \s^{-n}| \le \s^{-2n}\}.
$$
From now on we will consider only large values of $n$ and assume that $F^n_\mu(D_n)\subset R_1.$

Let us calculate the expression for $\F_{n,\mu} = H_n \circ F_{\mu}^{n+N} \circ H_n^{-1}$ at $K$, i.e., the expression for $F_\mu^{n+N}$ at $D_n$ in our new coordinates:
$$
(X,Y) \stackrel{H_n^{-1}}{\mapsto }(x, y) = (\s^{-n}X + 1, \ \s^{-2n}Y + \s^{-n})) \stackrel{F_{\mu}^{n+N}}{\mapsto }
$$
$$
\stackrel{F_{\mu}^{n+N}}{\mapsto }(\s^{-n}Y + 1, \ \s^{-2n}Y^2 - \lambda^n(\s^{-n}X + 1) + \mu ) \stackrel{H_n}{\mapsto }
$$
$$
\stackrel{H_n}{\mapsto } (Y, \ Y^2 - \lambda^n\s^nX + \s^{2n}(\mu  - (\s^{-n} + \lambda^n))) =
\mathcal F_{n,\mu }(X,Y).
$$ 

Consider the following $n$-dependent reparametrization defined in a neighborhood of $\mu_n = \s^{-n} + \lambda^n$:
\begin{equation}\label{eq:reparam}
\nu = \s^{2n}(\mu  - (\s^{-n} + \lambda^n)).
\end{equation}
Then $|\nu| \le 1$ corresponds to $|\mu  - (\s^{-n} + \lambda^n)| \le \s^{-2n}$, and we can suppose that the reparametrization is defined for $\mu$ in a segment of radius $\s^{-2n}$ centered at $\mu_n = \s^{-n} + \lambda^n$ and that $\nu\in [-1, 1]$. 

Let $\mathcal G_{n, \nu(\mu)} := \mathcal F_{n,\mu}.$ Since 
$$\mathcal G_{n, \nu}(X,Y) = (Y, \ Y^2 - (\lambda^n\s^n)X + \nu),$$ 
and $(\lambda\s)^n \to 0$ as $n\to \infty,$ we have
\begin{equation}\label{eq:conv}
\mathcal G_{n,\nu }(X,Y) \rightrightarrows (Y, Y^2 + \nu )=: \mathcal G_\nu(X, Y) \mbox{ for } (X, Y)\in K, \text{ and } |\nu|\le 1 \ (n\to\infty).
\end{equation}
Note that if we take any $k>1$, then for $(X, Y)\in k\cdot K = [-k, k]^2$ and $\nu\in[-k, k]$ the aforementioned reparametrization (\ref{eq:reparam}) and coordinate change (\ref{eq:newcoords}) (more precisely, the inverse ones) will be well-defined for sufficiently large values of $n$ and, moreover, the same uniform convergence will take place. Of course, in this case $H_n^{-1}(k\cdot K)$ will be a rectangle $D_{n,k}$ with the same center as $D_n$ but $k$ times larger, and the original parameter $\mu$ will take on values in the segment 
$\left[\mu_n - k\s^{-2n}, \; \mu_n + k\s^{-2n}\right]$.

The map $\mathcal G_\nu$ (viewed as a map from $\rr^2$ to $\rr^2$) ``forgets'' the $X$-coordinate, maps the square $K$ into an arc of parabola, and is semi-conjugated with the map $Y\mapsto Y^2+\nu$ (from $\rr$ to $\rr$). For $\nu$ small the latter map has exactly two fixed points: a sink at $a_\nu$ close to $\nu$ and a source at $r_\nu$ close to $1 - \nu$, and the orbit of any point $y \in [-r_\nu, r_\nu]$ is attracted to the sink. Thus, for any $r < r_\nu$ the rectangle
$$
B: |Y| \le r, \ |X| \le 1
$$
is attracted to the sink $(a_\nu, a_\nu)$ of $\mathcal G_\nu$, and the distance between the $j$-th image of a point and the sink decreases uniformly on $B$ as $j$ increases.

Since the convergence $\mathcal G_{n,\nu }|_K\to \mathcal G_\nu|_K$ also holds in $C^1$ (in fact, it holds even in $C^\infty$), for sufficiently large $n$ and small $\nu$ the map $\mathcal G_{n,\nu}$ has a sink $s_{n,\nu}$ close to $(a_\nu, a_\nu)$, and it is not difficult to show that $B$ is in its basin of attraction: it is enough to notice that it takes a uniformly limited number of iterations for $B$ to plunge into a dissipative domain where all maps $C^1$-close to $\mathcal G_n$ are contracting in a suitable metric.

\subsubsection*{The capture}

Suppose additionally that
\begin{equation}\label{cond:super_dissipative}
\lambda\s^2 < 1.
\end{equation}
We will call saddles that satisfy such inequality \emph{extremely dissipative}\footnote{\emph{Strongly dissipative} sounds better, but since some authors use this name for sectionally dissipative saddles, we would rather introduce extremely dissipative saddles to prevent any ambiguity.}.

Let us show that for the diffeomorphism $F_\mu$ with any $\mu$ sufficiently close to $\mu_n = \s^{-n} + \lambda^n$ the unstable manifold of the saddle $p$ intersects the attraction basin of the continuation of the sink $H_n^{-1}(s_{n,\nu(\mu_n)})$.

Recall that in the original $x,y$-coordinates we had $r = (0,1) \in W^u(p, F_\mu)$ and $F^N_\mu(r) = (1,\mu)\in W^u(p, F_\mu)$.
Condition~\ref{cond:super_dissipative} implies that $\lambda^n\s^{2n}\to 0$ as $n\to \infty$, and hence we have:
$$H_n(1,\mu ) = (0,\ \s^{2n}\mu - \s^{n} ) = (0,\ \nu + \lambda^n\s^{2n})\to (0, \nu).$$
Obviously, when $n$ is large and $\nu$ is small, $H_n(1,\mu ) \in  B,$ that is, the capture happens. 

\bigskip
Condition (\ref{cond:super_dissipative}) is important for the capture, because when it is violated, the point $(1,\mu)$ may be outside the basin of attraction of the sink $H_n^{-1}(s_{n,\nu(\mu)})$. For example, take $\lambda, \s$ such that $\s>10$ and  $\lambda^2\s^3 = 1$ and consider again $\mu = \mu_n = \s^{-n} + \lambda^n$. Then it is easy to check by a direct calculation using expression (\ref{eq:fnN}) that $F^{2n+N}_{\mu_n}(1, \mu_n) = (\lambda^n+\lambda^{2n}\s^n, \; 2)\notin R_1$ and $F^{2n+N-1}_{\mu_n}(1, \mu_n) = (\lambda^{n-1}+\lambda^{2n-1}\s^n, \; 2\s^{-n})\notin R_1$. Then the image of $(1,\mu_n)$ missed the rectangle $R_1$ on the second ``wind'', and one can add some additional conditions on the family to assure that this orbit never returns to the vicinity of our sink: for example, one can require that, for every map in the family, some neighborhood of the point $(0, 2)$ be attracted to another sink.

 However, due to the complexity of dynamics introduced by homoclinic tangencies and intersections, it is difficult to see whether the global unstable manifold of $p$ intersects the basin of the sink $H_n^{-1}(s_{n,\nu(\mu)})$ when $p$ is not extremely dissipative. In the general case we will use the following trick: a certain perturbation of the map with a homoclinic tangency associated with the saddle $p$ yields a diffeomorphism with an extremely dissipative saddle heteroclinically related to the original one and with a new homoclinic tangency associated with this new saddle.

\subsection{General case for extremely dissipative saddles}
\label{sect:gcfeds}

Let us begin with a general definition of an extremely dissipative saddle in any dimension. Consider a periodic hyperbolic sectionally dissipative saddle $p$ of a map $F$. Let $\s$ be the (unique) expanding eigenvalue of $dF^{{\rm per\,}(p)}(p)$ and $\lambda$ be the restriction of $dF^{{\rm per\,}(p)}(p)$ to the hyperplane $E^s_p$ of contracting eigenvectors. Recall that for any norm on $E^s_p$ the corresponding operator norm of $\lambda$ is defined.   

\begin{defin}
A periodic hyperbolic sectionally dissipative saddle $p$ of a map $F$ is called \emph{extremely dissipative} if there is a norm on $E^s_p$ such that the following inequality holds:
\begin{equation}\label{eq:exdis}
\|\lambda\|\cdot \s^2 < 1.
\end{equation}
\end{defin}

In this subsection we will prove the capture lemma for extremely dissipative saddles.

\medskip

\subsubsection*{Preliminary perturbations}

Suppose $F$ is a $C^r$-smooth ($r\ge 1$) diffeomorphism of $M$ with a homoclinic tangency for an extremely dissipative saddle $p$. Then $F$ can be $C^r$-approximated by a $C^\infty$-diffeomorphism $\hat F$ with the following properties:
\begin{itemize}\setlength\itemsep{-0.1em}
\item $\hat F$ has a non-degenerate homoclinic tangency associated with the con\-tinuation of~$p$;
\item the saddle $p$ is non-resonant for $\hat F$, and all eigenvalues of this saddle are different.
\end{itemize}
Thus, without loss of generality we can assume from the very beginning that $F$ is $C^\infty$-smooth and possesses these properties. To prove the capture lemma for extremely dissipative saddles, it suffices to show that such a diffeomorphism can be $C^\infty$-approximated by a diffeomorphism for which the unstable manifold of the continuation of $p$ intersects a basin of a sink.

\subsubsection*{Linearizing coordinates}

Since $p$ is now non-resonant, by Sternberg's theorem there is a neighborhood of $p$ where $F^{{\rm per\,}(p)}$ can be linearized by a suitable $C^2$-smooth change of coordinates. We will assume for simplicity that $p$ is a fixed saddle, but the argument is almost literally the same for a periodic saddle.

Let us denote by $x, y$ the linearizing coordinates in the neighborhood of $p$: $x\in\rr^{m-1}, \ y\in\rr, \ m = \dim M.$ For the sake of simplicity we identify the points in the domain of the linearizing chart with their images in the codomain. We may assume that in our linearizing coordinates $p$ is the origin, $W^s_{loc}(p)\subset\{y = 0\}$, and $W^u_{loc}(p)\subset\{x = 0\}.$ Then, after a linear change of coordinates that preserves the $y$-axis and the $x$-hyperplane, we can also assume that
\begin{itemize}\setlength\itemsep{-0.1em}
\item[-]{the point $r = (0, 1)\in W^u_{loc}(p)$  belongs to the orbit of tangency and $F^N$ takes $r$ to the point $q = (e, 0)\in W^s_{loc}(p)$, the euclidean norm of $e$ in $\rr^{m-1}$ being smaller than 1;} 
\item[-]{the linear map $\lambda$ 
 contracts the unit ball $B$ of the $x$-plane;}
\item[-]{our coordinates linearize $F$ in a neighborhood $R_0$ of the cylinder $B\times [-1, 1]$.}
\end{itemize}
Let $R_1$ be a small neighborhood of the point $r$. If we write $F^N|_{R_1}$ in the following form:
$$\left.F^N\right|_{R_1}: (x, y) \mapsto (\A(x, y), \B(x,y)),$$ 
then, since there is a quadratic tangency at $q$, the maps $\A\colon \rr^m\to \rr^{m-1}$ and $\B\colon \rr^m\to\rr$ will satisfy:
\begin{equation}\label{eq:cond0}
\A(0,1) = e, \ \B(0, 1) = \partial_y\B(0, 1) = 0, \;\; \partial_{yy}\B(0, 1)\ne 0.
\end{equation}

\subsubsection*{A special family}

Now let us choose a one-parameter $C^\infty$-family  $(F_\mu)$ that passes through our diffeomorphism~$F$. The required perturbation will then be obtained by an arbitrary small shift along the parameter, as we did in the model case.

 When choosing the family we should keep in mind that, first, we want $F^N_\mu$ to have the simplest possible form in the neighborhood $R_1$ of $r$, second, we do not really want the map to change inside $R_0$ when we change the parameter (in particular, we want our saddle to stay at the same place and have the same eigenvalues), and third, we want the tangency to be unfolded non-degenerately. To satisfy these requirements we will consider a family $(F_\mu)_{\mu\in[-\e, \e]}$ such that
\begin{itemize}\setlength\itemsep{-0.1em}
 \item[-]{$F_0 = F$ and any $F_\mu$ coincides with $F$ outside a small neighborhood of the point $z = F^{-1}(q) = F^{N-1}(r)\notin R_0$,}
 \item[-]{when we change the parameter, the $F_\mu$-image of every point in some even smaller neighborhood of $z$ shifts in the  $y$-direction with the same speed equal to 1.}
\end{itemize}
In fact, keeping in mind restrictions (\ref{eq:cond0}) we can make $F^N_\mu$ have the following form in the neighborhood $R_1$ of $r$:
\begin{equation}\label{eq:fNgeneral}
\left.F^N_\mu\right|_{R_1}\colon (x, y)\mapsto \left(e + a\cdot(y-1) + \gamma x + \rho_1(x,y), \;\mu - cx + b(y-1)^2 + \rho_2(x, y)\right),
\end{equation}
where $a\in\rr^{m-1}, \ b\in\rr,$ $c\colon \rr^{m-1}\to \rr$ and $\gamma\colon \rr^{m-1}\to \rr^{m-1}$ are linear maps, and $\rho_{1},\rho_2$ are remainder terms which satisfy the following conditions:  

\begin{equation}\label{rho_conditions}
j^1_{(0,1)}\rho_1 = 0, \ j^1_{(0,1)}\rho_2 = 0,\;\; \partial_{yy}\rho_2(0,1) = 0.
\end{equation}
Here $j^1_t$ stands for the 1-jet of the map at the point $t$. Note that by choosing a proper family and a sufficiently small neighborhood $R_1$ we made the remainder terms independent of $\mu$.
 
Expression (\ref{eq:fNgeneral}) resembles expression~(\ref{eq:fN}) from the model example: the only difference is that several new ``coefficients'' and these two remainder terms have appeared.

As we have already mentioned, in our special family the restriction $F_\mu|_{R_0}$ does not depend on $\mu$:
\begin{equation}\label{eq:fngeneral}
F_\mu|_{R_0}\colon (x, y)\mapsto(\lambda x, \s y).
\end{equation}

\subsubsection*{Renormalization}
Now we can use the renormalization technique as in the model case. We follow \cite{PT, PV}, as before.

Consider the following $n$-dependent affine change of coordinates:
\begin{equation}\label{eq:newcoordsgeneral}
H_n: X = \sigma^n(x - e), \ Y = b\sigma^{2n}(y - \sigma^{-n}).
\end{equation}
In what follows we will consider even $n$ only, so $\s^n$ will always be positive.
The origin of the new coordinates is at the point $(e, \s^{-n})$ in the $x,y$-coordinates and the cube $K = [-k, k]^m, \; k\ge 2,$ in the new coordinates corresponds to the following parallelotope $D_{n, k}$ in the original coordinates:
$$
D_{n,k} = H_n^{-1}(K) = \{ (x,y)\mid |y - \s^{-n}| \le k\s^{-2n}|b^{-1}|, \ |(x - e)_j| \le k\s^{-n}, \; j = 1,\dots, m-1 \}.
$$

As in the model case, we calculate the expression for $\F_{n,\mu} = H_n \circ F_{\mu}^{n+N} \circ H_n^{-1}$ at $K$ assuming that $n$ is sufficiently large:
$$
(X,Y) \stackrel{H_n^{-1}}{\mapsto }(x, y) = (\s^{-n}X + e, \ b^{-1}\s^{-2n}Y + \s^{-n})) \stackrel{F_{\mu}^{n+N}}{\mapsto }
$$
$$
\stackrel{F_{\mu}^{n+N}}{\mapsto }\left(e + a\cdot \frac{\s^{-n}}{b}Y + \gamma(\lambda^n(\s^{-n}X + e)) + \hat\rho_1, \ \mu - c\lambda^n(\s^{-n}X + e) + b(b^{-1}\s^{-n}Y)^2 + \hat{\rho}_2\right ) \stackrel{H_n}{\mapsto }
$$
$$
 \left(ab^{-1}Y + \gamma(\lambda^n(X + \s^{n}e)) + \s^n\hat{\rho}_1, \ Y^2 - cb\s^n\lambda^n(X) + b\s^{2n}(\mu - (\s^{-n} + c\lambda^n(e))) + b\s^{2n}\hat{\rho}_2\right)
$$ 

Here 
$$\hat\rho_i(X, Y) = \rho_i(\lambda^n(\s^{-n}X + e), 1+b^{-1}\s^{-n}Y), \; i = 1,2.$$
 Note that, since the saddle $p$ is dissipative, we have $\|\lambda^n\|\s^n \to 0$ as $n\to \infty$ for any operator norm of $\lambda$. Then, obviously,  $\|\gamma(\lambda^n(\s^{n}e))\|\to 0$,  $\|\gamma\circ\lambda^n\|\to 0$ and $\|cb\s^n\cdot\lambda^n\|\to 0$ as $n\to\infty$. Furthermore, it is easy to check that conditions (\ref{rho_conditions}) yield that
$$\s^n\hat\rho_1|_{K} \to 0, \ \s^{2n}\hat\rho_2|_{K} \to 0 \text{ in } C^1.$$
 
Now let us make, for $n$ sufficiently large, an $n$-dependent reparametrization
\begin{equation}\label{eq:reparamgeneral}
\nu = b\s^{2n}(\mu  - (\s^{-n} + c\circ\lambda^n(e)))
\end{equation}
defined in a closed $k\frac{\s^{-2n}}{|b|}$-neighborhood of $\mu_n = \s^{-n} + c\circ\lambda^n(e)$. Denote this neighborhood by $I_n$. Then $I_n$ corresponds to the interval $[-k, k]$ in the new parameter space.

Take $A = a\cdot b^{-1}\in \rr^{m-1}$ and consider the family of maps
 $$\mathcal G_\nu\colon \rr^{m-1}\times\rr\to\rr^{m-1}\times\rr\colon (X, Y) \mapsto (AY, \; Y^2+\nu).$$ 
We can conclude now that for $\mathcal G_{n, \nu} := \mathcal F_{n,\mu(\nu)}$ restricted to $K$ the following convergence takes place:

\begin{equation}\label{eq:convgeneral}
\mathcal G_{n,\nu }|_K \xrightarrow{C^1} \mathcal G_\nu|_K  \text{ uniformly for } |\nu|\le 1 \text{ as } n\to\infty.
\end{equation}

\subsubsection*{The sink and the capture}

Analogously to the model example, for $\nu$ close to zero the map $\mathcal G_\nu$ has a sink $(Aa_\nu, a_\nu)$, and there is a number $\delta > 0$ independent of $\nu$ such that the cube $B$:
$$
B: |Y| \le \delta, \ |X_i| \le \delta, \ i = 1,\dots, m-1,
$$
is in the basin of this sink. 

Then the convergence (\ref{eq:convgeneral}) implies, for $n$ sufficiently large and $\nu$ sufficiently small, that the map $\mathcal G_{n,\nu}$ has a sink $s_{n,\nu}$ close to $(Aa_\nu, a_\nu)$ and that $B$ is in the basin of attraction of this sink.

Finally, we calculate the $X,Y$-coordinates of the point $F^N_\mu(r)\in W^u(p, F_\mu)$. Since this point has $x,y$-coordinates equal to $(e, \mu)$, we have

$$H_n(e, \mu) = (0, \nu + bc(\s^{2n}\lambda^n(e)))\to (0, \nu) \text { as } n\to\infty.$$
This convergence holds because the saddle is extremely dissipative. Note that though the definition of extreme dissipativity requires condition (\ref{eq:exdis}) to hold in some suitable norm, it nevertheless implies the convergence $\|\s^{2n}\lambda^n\|\to 0$ (as $n\to \infty$) in any norm. 
We see that when $n$ is large and $\nu$ is small, $H_n(e, \mu)$ is in $B$.  We have obtained a diffeomorphism $F_\mu$ for which the unstable manifold of the saddle $p$ intersects the basin of the sink. Recall that the reparametrization was defined for $\mu\in I_n$ and the intervals $I_n$ are close to zero when $n$ is large, therefore such $F_\mu$ can be taken arbitrarily close to $F$, which finishes the proof.  

\medskip

\begin{rem}
We presented the argument for a fixed saddle but, as we have already mentioned, essentially the same argument works for a periodic saddle even if the tangency involves the invariant manifolds of different points of the orbit. The only difference is that for a given $n$ we should consider $F^{n\cdot\per(p)+N}_\mu$ instead of $F^{n+N}_\mu$.
\end{rem}

\begin{rem}\label{rem:genfamily}
The argument of this subsection can be modified in order to show that the capture happens when a quadratic homoclinic tangency associated with an extremely dissipative saddle is unfolded in a generic family of $C^r$-diffeomorphisms, and then one can also get rid of the extreme dissipativity assumption essentially in the same way as it is done in the next subsection. However, we do not need this stronger version of the capture lemma in this paper.
\end{rem}


\subsection{Extreme dissipativity requirement eliminated}

Now we proceed to the case when the saddle $p$ is not extremely dissipative. As in Section~\ref{sect:gcfeds}, we can assume that our map $F$ is $C^\infty$-smooth, the linearization theorem is applicable in the neighborhood of $p$, and the tangency is quadratic. Every perturbation discussed in this section is supposed to be $C^\infty$-small.

\subsubsection*{Finding another saddle}

Imagine that after another small perturbation we find an extremely dissipative saddle $p'$ heteroclinically related to the (continuation of the) original saddle and, more than that, there is a homoclinic tangency associated with $p'$. Then we can apply our capture lemma to extremely dissipative saddle $p'$ and conclude that after another perturbation $W^u(p')$ intersects the basin of a sink. We preserve the original notation $p, p'$ for the continuations of our saddles. 

Since two saddles are heteroclinically related, $\lambda$-lemma implies that $W^u(O(p))$ accumulates on $W^u(O(p'))$. But then $W^u(O(p))$ intersects the basin of the same sink, which proves our lemma in the general case.

The renormalization technique provides a natural candidate for such a saddle $p'$. 
Consider the map $\mathcal G_\nu$ (introduced in the previous subsection) for $\nu = -2$:
$$(X, Y) \mapsto (AY, Y^2 - 2).$$
It has a fixed saddle point $(2A, 2)$ with eigenvalues 4 and 0, the latter being of multiplicity $m-1$. 
Since the maps $\mathcal G_{n, -2}$ approximate the map $\mathcal G_{-2}$ in $C^1$ as $n\to\infty$, for sufficiently large values of $n$ and for $\nu\in [-2-\e, -2+\e]$ the maps $\mathcal G_{n, \nu}$ have extremely dissipative fixed saddles $p_{n, \nu}$ close to $(2A, 2)$. Indeed, for $n$ large, $m-1$ eigenvalues at the saddle $p_{n, \nu}$ are close to zero and the last eigenvalue is close to 4, thus this saddle has to be extremely dissipative. Then the point $\tilde{p}_{n, \mu(\nu)} = H_n^{-1}(p_{n, \nu})$ is an extremely dissipative periodic point of the corresponding diffeomorphism $F_{\mu(\nu)}$. Such a saddle is a natural candidate for the role of the saddle $p'$ because, as the following proposition shows, for some $\nu_0$ close to $-2$ there is a homoclinic tangency associated with the saddle $\tilde{p}_{n, \mu(\nu_0)}$, provided that $n$ is sufficiently large.  


\begin{prop}\label{prop:renormsaddletan}
Let $\e>0$ be fixed. Then, for $n$ sufficiently large, there is $\nu_0\in  {[-2-\e, -2+\e]}$ such that the map $\mathcal G_{n, \nu_0}$ has a homoclinic tangency associated with the saddle~$p_{n, \nu_0}$.
\end{prop}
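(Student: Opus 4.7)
The plan is to find the tangency via a saddle-node-type bifurcation of heteroclinic intersections, using the limit map $\mathcal G_\nu$ as a model and the uniform $C^1$-convergence $\mathcal G_{n,\nu}\to\mathcal G_\nu$ to transfer the picture to the approximating diffeomorphisms.

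First I would analyze the limit map $\mathcal G_\nu(X,Y) = (AY, Y^2+\nu)$ near $\nu = -2$. Since this map collapses the $X$-direction, its local stable set at the fixed point $p_\nu = (Ar_\nu, r_\nu)$, where $r_\nu = (1+\sqrt{1-4\nu})/2$, is the hyperplane $\{Y = r_\nu\}$, and the second iterate of a short unstable arc at $p_\nu$ is well-approximated by the curve $\gamma^\nu(Y) = (A(Y^2+\nu),\,(Y^2+\nu)^2 + \nu)$. Intersections of $\gamma^\nu$ with $\{Y = r_\nu\}$ other than the fixed point itself are the solutions of $(Y^2+\nu)^2 = r_\nu - \nu$ with the minus sign, i.e.\ $Y^2 = -\nu - \sqrt{r_\nu-\nu}$. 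A Taylor expansion around $\nu = -2$ yields $Y^2 \sim -\tfrac{2}{3}(\nu+2)$, so two distinct real solutions exist for $\nu$ slightly less than $-2$, they merge at $Y = 0$ exactly when $\nu = -2$ (producing a tangential intersection at the point $q = (-2A, 2)$), and no real solutions exist for $\nu$ slightly greater than $-2$. This is a classical saddle-node (fold) bifurcation of the intersection set in the parameter $\nu$.

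Next I would transfer this bifurcation to the approximating family. By the uniform $C^1$-convergence (\ref{eq:convgeneral}), for $n$ large the saddles $p_{n,\nu}$, their local invariant manifolds $W^u_{loc}(p_{n,\nu})$ and $W^s_{loc}(p_{n,\nu})$, and the second iterate $\mathcal G_{n,\nu}^2(W^u_{loc}(p_{n,\nu}))$ are all $C^1$-close to their limit analogues, uniformly for $\nu$ in the compact interval $[-2-\e, -2+\e]$. The persistence of the fold bifurcation under small $C^1$-perturbation then yields, for every sufficiently large $n$, a parameter $\nu_0 = \nu_0(n) \in [-2-\e, -2+\e]$ at which the two transverse intersections of $\mathcal G_{n,\nu}^2(W^u_{loc}(p_{n,\nu}))$ with $W^s_{loc}(p_{n,\nu})$ near $q$ coalesce into a single quadratic tangency --- which, since both submanifolds belong to $p_{n,\nu_0}$, is precisely a homoclinic tangency associated with $p_{n,\nu_0}$.

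The main obstacle will be justifying the persistence step rigorously. Because the limit map is non-invertible, the two ``branches'' of intersections in the limit actually hit one and the same image point on the degenerate curve $\gamma^\nu$ (which is traversed twice, once by $Y = +\sqrt{\cdot}$ and once by $Y = -\sqrt{\cdot}$); one has to verify that under the diffeomorphic $C^1$-perturbation these two branches genuinely separate into two distinct nearby transverse intersections and then collide in an honest quadratic tangency. This should follow from an implicit function theorem argument exploiting the nonvanishing of the coefficient $-\tfrac{2}{3}$ in the unfolding $Y^2 \sim -\tfrac{2}{3}(\nu+2)$, which controls the transversality of the unfolding and, combined with the $C^1$-closeness, forces a bifurcation parameter $\nu_0(n)$ to lie in $[-2-\e, -2+\e]$ for all large $n$.
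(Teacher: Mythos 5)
Your approach is the same one the paper delegates to \cite{PT} (\S 6.3, Prop. 2): study the one--dimensional limit dynamics of $\mathcal G_\nu$ near $\nu=-2$, identify where the downward branch of $W^u(p_\nu)$ grazes the local stable set $\{Y=r_\nu\}$, and transfer this to the honest diffeomorphisms $\mathcal G_{n,\nu}$ via the uniform convergence in (\ref{eq:convgeneral}). Your explicit computations are correct: $r_\nu=(1+\sqrt{1-4\nu})/2$, $r_\nu(-2)=2$, the reduction to $Y^2=-\nu-r_\nu$ (since $\sqrt{r_\nu-\nu}=r_\nu$), and the expansion $Y^2\sim -\tfrac23(\nu+2)$ all check out, as does the location $(-2A,2)$ of the critical image point. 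The ``main obstacle'' you flag is real and is exactly the subtlety one has to address.

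Two comments on the final step. First, a small misstatement: the curve $\gamma^\nu(Y)$ for $Y$ near $0$ is not the second iterate of a \emph{short unstable arc at $p_\nu$} (that arc has $Y$ near $r_\nu\approx 2$); it is the image under $\mathcal G_\nu^2$ of a piece of $W^u(p_\nu)$ that has already been carried out, by a bounded number of iterations, to the region $Y\approx 0$. Second, the invocation of the implicit function theorem ``exploiting the coefficient $-2/3$'' is morally right but should be set up differently: the coefficient controls the \emph{transversality of the unfolding in $\nu$} ($\partial_\nu f\ne 0$), while the genuineness of the quadratic fold for finite $n$ comes from $\partial_{YY}f(0,-2)\ne 0$, i.e.\ from the critical point of the 1D map $Y\mapsto Y^2+\nu$, which is inherited by the $C^2$-close maps $\mathcal G_{n,\nu}$. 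Once both non-degeneracies are recorded, the stability of the fold singularity under $C^2$-small perturbation (the convergence in (\ref{eq:convgeneral}) in fact holds in $C^\infty$) gives the desired $\nu_0(n)\in[-2-\e,-2+\e]$. The argument the paper points to in \cite{PT} sidesteps the fold computation entirely: it fixes a compact disk in $W^s(p_{n,\nu})$ and a compact arc in $W^u(p_{n,\nu})$, shows they intersect transversely far from their boundaries at $\nu=-2-\e$ and do not meet at $\nu=-2+\e$, and concludes by a continuity/intermediate-value argument that some $\nu_0$ in between produces a tangency. That route is softer and avoids worrying about how the degenerate (rank-dropping) limit parameterization unfolds; your route is more explicit and quantitative. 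Either way the proposal is sound as a sketch, with the proviso above about what exactly the implicit function theorem is applied to.
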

The proof for the two-dimensional case can be found in \cite{PT}(\S 6.3, Prop. 2), and the generalization of this proof for higher dimensions is straightforward. 
Application of $H_n^{-1}$ yields the corresponding tangency for the saddle $H_n^{-1}(p_{n, \nu_0})$ of $F_{\mu(\nu_0)}$.

\subsubsection*{Heteroclinic relations}

J.~Palis and M.~Viana  study the saddles $\tilde{p}_{n, \mu(\nu)} = H_n^{-1}(p_{n, \nu}), \ \nu\in[-2-\e, -2+\e],$ in~\cite{PV} to construct basic sets with large stable thickness. They prove that, under some assumptions on the original saddle $p$ and on the tangency unfolded to create the saddle $\tilde{p}_{n, \mu}$, this latter saddle is heteroclinically related to the continuation of $p$. In particular, they consider only tangencies between invariant manifolds of one and the same periodic saddle while we allow tangencies between invariant manifolds of different points of a periodic orbit.  

Since we want to prove capture lemma for any sectionally dissipative saddle and any tangency, some additional work has to be done in order to use their results. Our plan is to replace, if necessary, the original saddle, or rather its continuation, with a heteroclinically related saddle, simultaneously obtaining a tangency associated with this new saddle, and then to replace this tangency by another one so that both the new saddle and the new tangency satisfy the aforementioned assumptions of~\cite{PV}. Then the unfolding of this last tangency will yield an extremely dissipative saddle heteroclinically related to the continuations of both previous saddles.  

\bigskip

 At this point we assume that $F\in\Diff^\infty(M)$ has a sectionally dissipative periodic saddle~$p$, that $F^{\per p}$ is linearizable in the neighborhood of $p$, and that the tangency associated with the saddle $p$ is quadratic. We will use the notation introduced in the subsection of Section~\ref{sect:gcfeds} devoted to linearizing coordinates, i.e., we will suppose that these coordinates are defined in a neighborhood $R_0$ of $p$, that $W^u_{loc}({p})$ lies in the $y$-axis and $W^s_{loc}({p})$ lies in the $x$-hyperplane, that there are points $r, q: F^N(r) = q$ in the orbit of tangency, etc. Yet again we do not distinguish the points in $R_0$ and their images under the linearizing chart. When we resort to geometric intuition, we regard the $y$-axis as ``vertical'' and the $x$-plane as ``horizontal''.

The saddle $p$ of $F$ may have several weakest contracting eigenvalues, i.e., contracting eigenvalues with maximal absolute value, but in a generic situation, which we assume to be the case, there is either one real weakest contracting eigenvalue or a pair of conjugated non-real ones. 

Denote by $w$ the number of these weakest contracting eigenvectors.
Let $E^{uw}_{p}$ be the subspace of $T_pM$ spanned by the expanding and the weakest contracting eigenvectors of $p$ and $E^{ss}_{p}$ be the subspace spanned by the rest contracting eigenvectors, which we will call strong. The saddle $p$ has an $(m-w-1)$-dimensional strong stable manifold $W^{ss}(p)$ tangent to $E^{ss}_{p}$. If $M$ is two-dimensional, we take $E^{ss}_p = \{0\}$ and $W^{ss}(p) = \{p\}$. After another small perturbation we can assume that $q\notin W^{ss}(p)$. We will always assume that in what follows.

Linearizing coordinates allow us to define a $dF^{\per(p)}$-invariant splitting $E^{uw} \oplus E^{ss}$ of $T_{R_0}M$: in the linearizing coordinates the fibers $E^{uw}_z, z\in R_0,$ are simply parallel to $E^{uw}_p$, and the fibers $E^{ss}_z$ are parallel to $E^{ss}_p$.

Denote by $\pi$ the projection onto $E^{uw}$ along $E^{ss}$. Consider the map 
$$\Phi \colon E^{uw}_r\to E^{uw}_q, \ \ \Phi = \pi \circ dF^{N}(r)|_{E^{uw}_r}.$$
The linearizing coordinates allow us to identify $E^{uw}_r$ and $E^{uw}_q$ and regard $\Phi$ as a linear map from $\rr^{w+1}$ to itself. Then we can consider the following condition on the tangency at $q$:
\be\label{eq:pvcond}
\det(\pi \circ dF^{N}(r)|_{E^{uw}_r}) > 0.
\ee

In Section 6 of \cite{PV} J.~Palis and M.~Viana consider the case when $q\in W^s_{loc}(p)\cap W^u(p)$ and the saddle $p$ has a unique weakest contracting eigenvalue.   
They prove that if $\Phi$ defined above is an isomorphism, the saddle $\tilde{p}_{n, \mu(\nu)}$, obtained trough the renormalization technique when unfolding the tangency at $q$, has a unique weakest contracting eigenvalue too (for $n$ large). Moreover, if this eigenvalue is positive and there exist transverse homoclinic orbits that involve the same connected components of $W^u(p)\setminus\{p\}$ and $W^s(p)\setminus W^{ss}(p)$ as the former tangency at $q$, then $\tilde{p}_{n, \mu(\nu)}$ is heteroclinically related to the continuation of $p$ when $\nu$ is close to $-2$ and $n$ is sufficiently large. The point is that condition~(\ref{eq:pvcond}) ensures both that $\Phi$ is an isomorphism (which is obvious) and that the weakest contracting eigenvalue of $\tilde{p}_{n, \mu(\nu)}$ is positive, at least when $\nu$ is close to $-2$ and $n$ is large and even (see discussion after (6.10) at p. 244 of \cite{PV}; in the notation of J.~Palis and M.~Viana condition~(\ref{eq:pvcond}) has the form $\det \Delta_{\mu = 0}(r_0) > 0$). For future use let us state as a proposition the exact statement that we need and that has been proved in~\cite{PV} (but not stated as a proposition there).

\begin{prop}[\cite{PV}]
\label{prop:pv}
Suppose that the diffeomorphism $F\in\Diff^\infty(M)$ has a sectionally dissipative saddle $p$ and a quadratic homoclinic tangency between $W^s(p)$ and $W^u(p)$. Suppose, moreover, that the following conditions hold:
\begin{itemize}\setlength\itemsep{-0.1em}
\item[a)]{$p$ has a unique weakest contracting eigenvalue,}
\item[b)]{$F^{\per(p)}$ is linearizable in the neighborhood of $p$,}
\item[c)]{there exists a transverse homoclinic point that belongs to the connected components of $W^u(p)\setminus\{p\}$ and $W^s(p)\setminus W^{ss}(p)$ that are involved in the tangency,}
\item[d)]{condition (\ref{eq:pvcond}) holds for the orbit of tangency (with $N\in\nn$ and points $r\in W^s(p)\cap W^u_{loc}(p)$ and $q \in W^s_{loc}(p)\cap W^u(p)$ as described above).}
\end{itemize}
Then for $\nu$ close to $-2$ and $n$ sufficiently large the saddle $\tilde{p}_{n, \mu(\nu)}$ that appears when generically unfolding the tangency at $q$ in the one-parameter family $F_\mu$ is heteroclinically related to the continuation of $p$.
\end{prop}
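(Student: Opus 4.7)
The plan is to follow the reasoning of Palis--Viana while adapting to our slightly more general setting (a tangency between manifolds of different points of the orbit). There are two tasks: (I) verify that the renormalized saddle $\tilde p := \tilde p_{n,\mu(\nu)}$ has a positive weakest contracting eigenvalue for $n$ large and even and $\nu$ near $-2$; and (II) produce two transverse heteroclinic orbits between $\tilde p$ and the continuation of $p$.

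For (I), I would compute the derivative $dF_\mu^{n\per(p)+N}(\tilde p)$ in block form using the invariant splitting $E^{uw}\oplus E^{ss}$ coming from the linearizing chart near $p$. After conjugation by $H_n$ the $E^{uw}$-block converges to $d\mathcal G_\nu$ at its fixed saddle near $(2A,2)$, whose nontrivial eigenvalues are close to $4$ (expanding) and to $0$ (the weakest contracting direction(s)). Condition~(\ref{eq:pvcond}) controls a sign that enters through the $n$-th iterate along the homoclinic orbit: combined with $n$ being even (so that $\s^n>0$), it forces the weakest contracting eigenvalue of $\tilde p$ to be positive. The strong-stable block contributes additional eigenvalues concentrated near zero, keeping $\tilde p$ sectionally (in fact extremely) dissipative with a unique weakest contracting eigenvalue.

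For (II), I would construct the heteroclinic intersections geometrically. In renormalized coordinates $W^u_{\mathrm{loc}}(\tilde p)$ hugs the parabolic arc $\{(AY,\,Y^2+\nu)\}$ while $W^s_{\mathrm{loc}}(\tilde p)$ is nearly flat in the $X$-directions; pulling back by $H_n^{-1}$ places $W^u_{\mathrm{loc}}(\tilde p)$ inside the tiny box $D_{n,k}$ and places $W^s_{\mathrm{loc}}(\tilde p)$ transverse to $W^u_{\mathrm{loc}}(p)$. Forward iteration drags $W^u_{\mathrm{loc}}(\tilde p)$ along the orbit of the original tangency, and the inclination lemma forces the image to accumulate $C^1$-densely on $W^u(p)$; the symmetric argument pushes $W^s_{\mathrm{loc}}(\tilde p)$ under backward iteration to accumulate on $W^s(p)$. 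Condition~(c) supplies a transverse homoclinic orbit of $p$ lying in the specified components of $W^u(p)\setminus\{p\}$ and $W^s(p)\setminus W^{ss}(p)$; by $C^1$-accumulation, both desired transverse intersections $W^u(\tilde p)\pitchfork W^s(p)$ and $W^s(\tilde p)\pitchfork W^u(p)$ can be extracted near that homoclinic orbit.

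The main obstacle is the bookkeeping of \emph{connected components}: in higher dimensions $W^s(p)\setminus W^{ss}(p)$ typically has two components, and the accumulating pieces must land on the same components as the transverse homoclinic provided by~(c). This is precisely where the positivity of the weakest contracting eigenvalue (secured in step (I) via condition~(\ref{eq:pvcond}) and $n$ even) becomes essential: it prevents a flip between the two sides of $W^{ss}(p)$ under iteration, so the inclination-lemma accumulation lands on the prescribed side. Everything else reduces to standard $C^1$-continuity of compact disks of invariant manifolds and the $\lambda$-lemma.
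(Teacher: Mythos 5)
The paper itself does not prove Proposition~\ref{prop:pv}: it explicitly states that this is exactly a result established in \cite{PV}, Section~6 (see the discussion after (6.10) at p.~244 there), and the proposition is included only to record the precise statement being quoted. So there is no ``paper's own proof'' to compare against; you are attempting to reconstruct Palis--Viana's argument, which the paper deliberately outsources.

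On the substance of your reconstruction, a few points are off. First, you frame the proposal as ``adapting to our slightly more general setting (a tangency between manifolds of different points of the orbit),'' but Proposition~\ref{prop:pv} is stated precisely for the \emph{same-point} case: condition~d) requires $q\in W^s_{\mathrm{loc}}(p)\cap W^u(p)$, and this is the setting Palis--Viana treat. The reduction from tangencies between invariant manifolds of distinct points of $O(p)$ to the same-point case is handled separately by the paper via Propositions~\ref{prop:goodtan2} and~\ref{prop:goodtan3}, not inside Proposition~\ref{prop:pv}. Second, the reasoning in your part~(II) is circular as written: you invoke the inclination lemma to make $W^u(\tilde p)$ ``accumulate $C^1$-densely on $W^u(p)$,'' but the inclination lemma presupposes a transverse intersection $W^u(\tilde p)\pitchfork W^s(p)\ne\emptyset$, which is exactly one of the two transverse intersections you are trying to produce. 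Palis--Viana do not argue this way; they analyze the return map near the unfolded tangency directly, tracking carefully which side of the relevant strong-stable leaves each disk lies on before any $\lambda$-lemma-type accumulation can be invoked. Third, you attribute the role of the positivity of the weakest contracting eigenvalue to ``preventing a flip between the two sides of $W^{ss}(p)$''; the eigenvalue whose sign is controlled by condition~(\ref{eq:pvcond}) and the parity of $n$ is the weakest contracting eigenvalue of the new saddle $\tilde p_{n,\mu(\nu)}$, and the side-preservation it buys is about $W^{ss}(\tilde p_{n,\mu(\nu)})$ inside $W^s(\tilde p_{n,\mu(\nu)})$, not about $W^{ss}(p)$. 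Getting these two strong-stable foliations confused would derail the component bookkeeping that you correctly identify as the crux of the argument.
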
 

We will always assume $F_\mu$ to be the special family defined in Section~\ref{sect:gcfeds}, but in \cite{PV} they consider an arbitrary one-parameter family that unfolds the tangency generically, i.e., with non-zero speed (see also Remark~\ref{rem:genfamily}). The renormalization scheme works for such a family as well, and the saddles $\tilde{p}_{n, \mu(\nu)}$ can be defined analogously, but since we use our special family only, we do not go into details.

If our diffeomorphism $F$ satisfies all conditions stated in the hypothesis of Proposition~\ref{prop:pv}, then a small perturbation yields an extremely dissipative saddle $\tilde{p}_{n, \mu(\nu)}$ heteroclinically related to the continuation of the original saddle. Due to Proposition~\ref{prop:renormsaddletan}, we can also assume that our new saddle has a homoclinic tangency, so when we unfold this tangency, the unstable manifolds of both the new saddle and the continuation of the original one intersect a basin of a sink.
 
However, for now we suppose that the assumptions of Proposition~\ref{prop:pv} are not satisfied for the tangency that we consider.  Let us show that after an arbitrarily small perturbation one can obtain a diffeomorphism with a homoclinic tangency associated with a saddle $\hat{p}$ heteroclinically related to the continuation of the original saddle $p$ such that for this new tangency Proposition~\ref{prop:pv} is applicable. For that we need to  prove several auxiliary propositions first.

\subsubsection*{Upper and lower tangencies}

Let $F$, as before, have a quadratic homoclinic tangency at $q\in W^s(p)$. Take a connected component $\Gamma$ of $W^u(p)\setminus\{p\}$ that is involved\footnote{\label{footnote:involve}We say that the the connected component $\Gamma$ of $W^u(p)\setminus\{p\}$ is involved in the tangency at $q$ if $q\in F^k(\Gamma)$ for some $k$. Recall that we allow tangencies of invariant manifolds of different points of $O(p)$, so $q$ does not necessarily belong to $\Gamma$, but it is still natural to talk about the component of $W^u(p)\setminus\{p\}$ involved in the tangency. This remark applies to transverse homoclinic intersections as well.} in the tangency at $q$. If the expansive eigenvalue $\sigma$ of $p$ is negative, then both unstable separatrices of $p$ are involved and one can choose any. 
The stable manifold $W^s(p)$ is an injectively immersed open disk of dimension $m-1$. Let us call the ``side'' of $W^s(p)$ that faces $\Gamma$ at $p$ \emph{upper} and the opposite ``side'' \emph{lower}.

To be precise, consider the local stable manifold of $p$ first. This manifold splits a small ball $B_0$ centered at $p$ in two parts; the part that corresponds to $\Gamma$ will be called upper. Given any point $z\in W^s(p)$, take an even number $2l$ such that $F^{2l\cdot\per(p)}(z)\in W^s_{loc}(p)$. Now take a small ball $B\subset M$ centered at $z$. The connected component of $z$ in $W^s(p)\cap B$ splits $B$ in two parts. The upper part is the one that is mapped by $F^{2l\cdot\per(p)}$ inside the upper part of $B_0$ (we assume $B$ to be so small that its $F^{2l\cdot\per(p)}$-image lies inside $B_0$). Note that if $\s$ is negative, $F^{\per(p)}$ swaps the upper and the lower ``sides'' of $W^s(p)$.
 
 Thus, inside a small ball centered at the point $q$ of quadratic tangency the unstable manifold of $O(p)$ approaches $W^s(p)$ either from above or from below. Therefore, we can define the tangencies \emph{from above} and \emph{from below} (or \emph{upper} and \emph{lower} tangencies).  This concept is well-defined for any quadratic tangency associated with a sectionally dissipative saddle. Note that if the expansive eigenvalue of the saddle is negative, then any quadratic tangency related to this saddle becomes a tangency from below when the proper unstable separatrix is chosen. Therefore, in this case we will regard any quadratic tangency as a tangency from below. 

\begin{prop}\label{prop:lowertan}
Suppose that for a diffeomorphism $F\in\Diff^\infty(M)$ a sectionally dissipative periodic saddle $p$ has an upper quadratic tangency at the point $q$ and also a transverse homoclinic orbit that involves the same connected component of $W^u(p)\setminus\{p\}$ as the tangency. Then by an arbitrarily $C^\infty$-small perturbation one can obtain a diffeomorphism $G$ with a lower homoclinic tangency associated with a sectionally dissipative saddle $\hat p$ heteroclinically related to the continuation of $p$. If the original saddle had a unique weakest contracting eigenvalue, then one can take $\hat p$ with the same property. 
\end{prop}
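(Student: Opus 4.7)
The plan is to use the two hypotheses together---an upper tangency at $q$ and a transverse homoclinic orbit on the same branch of $W^u(p)\setminus\{p\}$---to manufacture a lower tangency for a saddle $\hat p$ lying in the homoclinic class of $p$. The existence of the transverse homoclinic orbit produces a Smale horseshoe $\Lambda\ni p$, and any periodic point of $\Lambda$ is automatically heteroclinically related to $p$, so I would look for $\hat p$ among the periodic saddles of $\Lambda$. If $\hat p$ is chosen sufficiently close to $p$ (of high period in the horseshoe), its spectrum is close to that of $p$ multiplied by bounded turnaround contributions, so it inherits sectional dissipativity and, if $p$ had a unique weakest contracting eigenvalue, so does $\hat p$ by continuous dependence.

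The geometric input comes from the $\lambda$-lemma applied inside $\Lambda$: $W^u(\hat p)$ accumulates on all of $W^u(\Lambda)$, and in particular on the tangent piece at $q$. Because the transverse homoclinic intersection forces $W^u(\Lambda)$ to accumulate on $W^s(\Lambda)$ from both transverse sides of $W^s_{loc}(\hat p)$, I would locate a piece $\Pi\subset W^u(\hat p)$ approaching $W^s(\hat p)$ from the side opposite to the separatrix involved in the accumulation, i.e.\ from below. The key point is that even though the original tangent parabola at $q$ curves ``upward'', the Cantor lamination $W^u(\Lambda)$ contains fibers whose iterates under $F^N$ pass through $W^s(\Lambda)$ transversely and continue to pieces on the lower side.

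The perturbation step is then standard. I would take a $C^\infty$-small one-parameter family of perturbations of $F$ supported in a small ball $B$ chosen to be disjoint from the linearizing neighborhood of $p$, from the local invariant manifolds of $\hat p$, and from the compact piece of $\Lambda$ that carries the heteroclinic configuration. As the parameter varies, the piece $\Pi$ slides toward $W^s(\hat p)$; at some small parameter value it becomes tangent to $W^s(\hat p)$ from below, and for a generic family this contact is quadratic. Because $B$ is disjoint from the linearizing neighborhood, $p$ and its eigenvalues are unchanged, the horseshoe $\Lambda$ persists, $\hat p$ continues as a hyperbolic periodic saddle with the inherited properties, and the heteroclinic relation $\hat p \leftrightarrow p$ survives.

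The principal obstacle is locating the lower-side piece $\Pi$: one has to control the orientation of $W^u(\Lambda)$ along the turnaround, which in dimension $\geq 3$ involves tracking the action of $dF^N$ on the central unstable bundle rather than a single sign. If the unperturbed horseshoe does not display accumulation from below (for instance, when every return map preserves the relevant orientation), then a preliminary $C^\infty$-small perturbation near the turnaround---chosen to introduce an orientation-reversing iterate along the homoclinic orbit without destroying the hyperbolicity of $\Lambda$---exposes such a piece. Once this sign bookkeeping is done, the rest of the argument (the sliding perturbation and the verification of inherited properties for $\hat p$) is routine.
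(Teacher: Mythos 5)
Your skeleton agrees with the paper: build the horseshoe $\Lambda$ from the transverse homoclinic orbit, find $\hat p$ among periodic points of $\Lambda$ close to $p$, show $\hat p$ is heteroclinically related to $p$ and inherits sectional dissipativity (and the unique weakest contracting eigenvalue), and finally perturb to produce a lower tangency. Two of these steps have genuine gaps.

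The main gap is the mechanism for the lower tangency. You observe that $W^u(\hat p)$ accumulates on the tangent piece at $q$, and that $W^u(\Lambda)$ approaches $W^s(\Lambda)$ from both transverse sides, and conclude that you can ``locate a piece $\Pi$ approaching $W^s(\hat p)$ from below''. But the transverse accumulation does not produce a near-tangency from below: those pieces are transverse, and a $C^\infty$-small perturbation supported away from the heteroclinic configuration cannot turn a transverse crossing into a tangency. You then acknowledge the orientation issue and propose a preliminary ``orientation-reversing'' perturbation, but this is an ad hoc fix: it is not clear such a perturbation exists compatibly with preserving the basic set, the linearization, the heteroclinic relation, and the tangency; the paper never needs it. The paper's mechanism is different and automatic: since the transverse homoclinic point has positive $y$-coordinate, the entire horseshoe $\Lambda$ lies \emph{above} $W^s_{\mathrm{loc}}(p)$ in the linearizing coordinates, and so does $\hat p = p_j$. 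Consequently the separatrix $\Gamma_j$ of $\hat p$ that crosses $W^s_{\mathrm{loc}}(p)$ points \emph{downward}, and the side of $W^s(\hat p)$ facing $\Gamma_j$ (the ``positive'' side) is the downward side. The $\lambda$-lemma then yields disks $D_k\subset W^s(\hat p)$ accumulating on $W^s_{\mathrm{loc}}(p)$ with positive side still downward, and segments $I_k\subset W^u(O(\hat p))$ accumulating on $W^u_{\mathrm{loc}}(p)$. Unfolding the \emph{original} upper tangency at $q$ in the special family $F_\mu$ slides the parabolic arcs $F^N_\mu(I_{k_1})$ past the stationary disks $D_{k_2}$; since the parabola opens away from the positive side of $D_{k_2}$, the tangency created is from the negative side, i.e.\ a lower tangency for $\hat p$. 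The sign flip you are worried about is delivered for free by the fact that $\hat p$ sits on the opposite side of $W^s_{\mathrm{loc}}(p)$ from the new stable disks $D_k$, not by any extra orientation-reversing perturbation.

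The secondary gap is the claim that the unique weakest contracting eigenvalue is inherited ``by continuous dependence''. There is no such continuity: as $\hat p \to p$ along $\Lambda$, its period diverges, so $dF^{\per(\hat p)}(\hat p)$ is not $C^0$-close to any fixed iterate of $dF^{\per(p)}(p)$. The paper has to run a cone argument: after a generic small perturbation ensuring $dF^{-\tilde N}(\tilde q)E^{ss}_{\tilde q}$ is transverse to $E^{uw}_{\tilde r}$, one checks that a strong-stable $\alpha$-cone $C^{ss}_\alpha$ along $O(\hat p)$ is forward-invariant and expanding for $dF^{-\per(\hat p)}$; this forces a unique invariant $(m-2)$-plane inside the cone, namely the strong stable subspace, and hence a single weakest contracting eigenvalue on the two-dimensional complement. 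Your sectional-dissipativity argument, by contrast, is essentially correct and matches the paper (two-volume contraction of $dF^{\per(p)}$ dominates the bounded turnaround).
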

\begin{proof}
We consider the case when the expansive eigenvalue $\sigma$ of $p$ is positive, because otherwise any quadratic tangency is a tangency from below. However, if we did not make this agreement, the argument would stay the same.

As above, we can assume without loss of generality that $F^{\per(p)}$ is linearizable in $R_0\ni p$ and the linearizing coordinates and the points $r, q$ are as described in Section~\ref{sect:gcfeds} ($q$ can be replaced with $F^{2l\cdot\per(p)}(q)$ for some $l > 0$ if necessary). 

Let the points $\tilde{r} = (0, y_{\tilde{r}}) \in W^u_{loc}(p)\cap R_0, y_{\tilde r}\in[0,1/2],$ and $\tilde{q} = (x_{\tilde q}, 0)\in W^s_{loc}(p)\cap R_0$, ${\|x_{\tilde q}\| < 1}, \ {\tilde{q} = F^{\tilde{N}}(\tilde{r})},$ belong to the transverse homoclinic orbit that involves the same connected component of $W^u(p)\setminus \{p\}$ as the tangency at $q$. Without loss of generality we can assume that $\tilde{N}$ is divisible by $\per(p)$ (and so $\tilde{q}\in W^u(p)$): if this is not the case, then it is not difficult to show\footnote{If the unstable separatrix $\Gamma$ of $p$ intersects $W^s(F^k(p))$ transversely, then $\Gamma$ accumulates on $W^u(F^k(p))$. Since $W^u(F^k(p))$ transversely intersects $W^s(F^{2k}(p))$, so does $\Gamma$. Continuing this line of argument, one concludes that $\Gamma$ has a point of transverse intersection with $W^s(F^{k\cdot\per(p)}(p)) = W^s(p)$.} with the help of the $\lambda$-lemma that there is another transverse homoclinic orbit for which this is true.  The existence of such an orbit implies (see, \cite{Katok}, Thm 6.5.5, and also Chapter 3 of \cite{PV}) that there is a basic set $\Lambda\ni p$. 

This basic set $\Lambda$ can be obtained in the following way. For $\delta > 0$ small, take a cylinder 
$$V_\delta = \{(x, y)\in R_0\colon \|x\| \le 1, |y| \le \delta\}.$$
Suppose that the value of $\delta$ is adjusted so that there is $\tilde{n}\in \nn$ such that $F^{\tilde{n}\cdot\per(p)}(V_\delta)$ is a cylinder close to the segment $\{0\}\times[-2y_{\tilde{r}}, 2y_{\tilde{r}}]\subset W^u_{loc}(p)$ and, therefore, $F^{\tilde{n}\cdot\per(p) + \tilde{N}}(V_\delta) \cap V_\delta$ has at least two connected components: one of these components is a cylinder $\Delta_p\ni p$ close to $\{0\}\times[-\delta, \delta]$ and another component $\Delta_{\tilde{q}}$ contains $\tilde{q}$. Denote $F^{\tilde{n}\cdot\per(p) + \tilde{N}}$ by $H$; then $H|_{\Delta_p\cup\Delta_{\tilde{q}}}$ is a horseshoe map (if $\delta$ is small enough) and $\hat\Lambda = \bigcap_{k\in\zz} H^k(\Delta_p\cup\Delta_{\tilde{q}})$ is its maximal invariant set. Then $\Lambda = \bigcup_{k \in \nn} F^k(\hat\Lambda)$ is the required basic set of $F$. We describe this construction to note two important features. First, since $\tilde r$ has a positive $y$-coordinate, this construction implies that in a sufficiently small neighborhood of $p$ every point $\xi\in\Lambda$ lies either in $W^s_{loc}(p)$ or above it relative to the $y$-axis. Second, $F^{-\tilde{N}}(\Delta_{\tilde{q}})$ is contained in a small neighborhood of $\tilde{r}$ when $\delta$ is small.
 
 Thus, it follows from the construction of $\Lambda$ that there is a sequence $(p_j)_{j\in\nn}$ of saddles $p_j\in \Lambda$ such that
\begin{itemize}\setlength\itemsep{-0.1em}
\item[a)]{$\per(p_j) = n_j\cdot\per(p) + \tilde{N}$  (therefore $\per(p_j)$ is divisible by $\per(p)$), }
\item[b)]{$p_j\to p$ and $n_j\to\infty$ as $j\to\infty$,}
\item[c)]{ every $p_j$ lies above $W^s_{loc}(p)$,}
\item[d)]{ there is $r_j\in O(p_j)$ close to $\tilde{r}$ such that $F^{\tilde N}$ maps $r_j$ into a point $q_j$ close to $\tilde{q}$, $F^{l\cdot \per(p)}(q_j)\in R_0$ for $l = 1,\dots, n_j$, and $F^{n_j\cdot \per(p)}(q_j) = r_j$.}
\item[e)]{ all orbits $O(p_j)$ are uniformly far from $F^{-1}(q)$. }
\end{itemize}

First note that properties b) and d) imply that for large $j$ the saddles $p_j$ are sectionally dissipative. Indeed, let us show that in our coordinates they contract the 2-dimensional volume (and use Remark~\ref{rem:volume}).
Since $p$ is sectionally dissipative, we can assume that in the linearizing coordinates on $R_0$ we have $\sigma\|\Lambda\| < 1$, where the norm is generated by the euclidean vector norm. Then in these coordinates $dF^{\per(p)}$ contracts the 2-dimensional volume.\footnote{This can be checked in a straightforward way, e.g., by considering an arbitrary pair of vectors ${(v_i = u_i + s_i \mid u_i \in E^u_p, s_i \in E^s_p)_{i = 1,2}}$ and comparing the Gram determinants of this pair and its image under $dF^{\per(p)}$, expressed in terms of $u_i, s_i, \sigma, \Lambda$.} It follows from b), d) that for $r_j\in O(p_j)$ we have
$$dF^{\per(p_j)}(r_j) = dF^{n_j\cdot \per(p)}(q_j) \circ dF^{\tilde N}(r_j).$$
Here $\tilde N$ is fixed, and so the norm of $dF^{\tilde N}(r_j)$ is uniformly bounded, whereas $dF^{n_j\cdot \per(p)}(q_j)$ coincides with $\left(dF^{\per(p)}(p)\right)^{n_j}$ in our coordinates and hence contracts 2-volume exponentially as $n_j\to\infty$. Thus, for sufficiently large values of $j$, the linear maps $dF^{\per(p_j)}(r_j)$ contract 2-volumes, which proves that $p_j$ are sectionally dissipative.

\bigskip

Furthermore, if $j$ is large enough, then due to the local product structure on $\Lambda$ the local stable manifold $W^s_{loc}(p_j)$ is a small nearly horizontal disk that lies above the $x$-hyperplane and transversally intersects $W^u_{loc}(p)$ at some point $a_j$. Likewise, $W^u_{loc}(p_j)$ is a nearly vertical segment that transversally intersects $W^s_{loc}(p)$ at $b_j$. Denote by $\Gamma_j$ the unstable separatrix of $p_j$ that contains the latter intersection. It is the continuation of $\Gamma_j$ that will be involved in the tangency we are going to obtain. As above, we can consider a ``side'' of $W^s(p_j)$ that faces $\Gamma_j$ at $p_j$. Let us call this ``side'' \emph{positive}, because in this case the word ``upper'' may cause ambiguity: note that the positive ``side'' of $W^s_{loc}(p_j)$ is directed down relative to the $y$-axis (see Figure \ref{lt}).

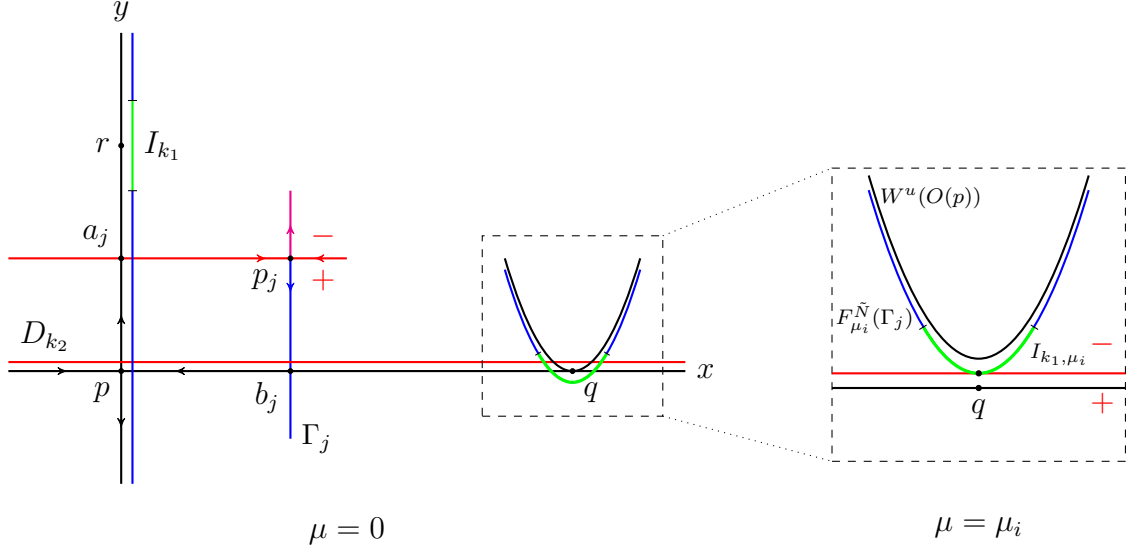
\begin{figure}
  \begin{center}
  \begin{tikzpicture}[scale=3]

    \node [below] at (1, -0.6) {$\mu = 0$};

    \coordinate (x) at (2.5, 0);
    \coordinate (y) at (0, 1.5);
    
    \draw[thick]    (y) node[above] {$y$} -- (0,0) coordinate (p) node[below left] {$p$} -- (0,-0.5);
    \filldraw [black] (p) circle (0.3pt);
    \draw[thick]   (-0.5, 0) -- (x) node[right] {$x$};
    
    \coordinate (q) at (2,0);
    \coordinate (r) at (0,1);

    \draw[->] (-0.25, 0) -- (-0.24, 0);
    \draw[<-] (0.24, 0) -- (0.25, 0);
    \draw[->] (0, -0.24) -- (0, -0.25);
    \draw[<-] (0, 0.25) -- (0, 0.24);

    \begin{scope}[xshift = 0.75cm, yshift = 0.5cm]
    \coordinate (pj) at (0, 0);
    \draw[thick, color = red]   (pj) -- +(-1.25, 0);
    \draw[thick, color = red]   (pj) -- +(0.25, 0) node[below left] {$+$} node[above left] {$-$};
    \draw[color = red, >-<] (-0.15, 0) -- (0.15, 0);
    \draw[color = blue, <-] (0, -0.15) -- (0, 0); 
    \draw[thick, color = blue] (0, -0.8) node[right, black] {\small $\Gamma_j$} -- (0, 0);
    \draw[thick, color = magenta] (0,0) -- (0, 0.3);
    \draw[color = magenta, ->] (0,0) -- (0, 0.15);
    \node[below left] at (pj) {$p_j$} circle (0.3pt);
    \filldraw [color = black] (pj) circle (0.3pt);

    \end{scope}

    \coordinate (aj) at (0, 0.5); \coordinate (bj) at (0.75, 0);
    \node[above left] at (aj) {$a_j$};
    \filldraw [color = black] (aj) circle (0.3pt);
    \node[below left] at (bj) {$b_j$};
    \filldraw [color = black] (bj) circle (0.3pt);

    \draw [thick, color = red] (-0.5, 0.04) node[above right, color = black] {$D_{k_2}$} -- (2.5, 0.04);
    \begin{scope}[xshift = -0.03cm]
    \draw [thick, color = blue] (0.08, -0.5) -- (0.08, 0.8);
    \draw [thick, color = blue] (0.08, 1.2) -- (0.08, 1.5);
    \draw [thick, color = green] (0.08, 0.8) -- (0.08, 1) node[right, black] {$I_{k_1}$} -- (0.08, 1.2);
    \draw (0.06, 0.8) -- (0.1, 0.8); \draw (0.06, 1.2) -- (0.1, 1.2);
    \end{scope}

    \draw[thick] (q) parabola +(0.3, 0.5);
    \draw[thick] (q) parabola +(-0.3, 0.5);
    \begin{scope}[yshift = -0.05cm]
    \draw[thick, blue] (1.7, 0.5) parabola[bend pos = 0.5] bend +(0,-0.5) (2.3, 0.5);
    \draw[very thick, green] (2, 0) parabola +(0.15, 0.125);
    \draw[very thick, green] (2, 0) parabola +(-0.15, 0.125);
    \draw (1.835, 0.12) -- (1.86, 0.135); \draw (2.14, 0.135) -- (2.165, 0.12);
    \end{scope}

    \begin{scope}[xshift = 2cm, yshift = 0.2cm, scale = 0.4]
     \coordinate (a) at (-1, 1);
     \coordinate (b) at (1, 1);
     \coordinate (c) at (1, -1);
     \coordinate (d) at (-1, -1);
    
     \path[draw, dashed] (a) -- (b) -- (c) -- (d) -- cycle; 
    \end{scope}

    \begin{scope}[xshift = 3.8cm, yshift = 0.25cm, scale = 0.65]
     \coordinate (a1) at (-1, 1);
     \coordinate (b1) at (1, 1);
     \coordinate (c1) at (1, -1);
     \coordinate (d1) at (-1, -1);
    
     \path[draw, dashed] (a1) -- (b1) -- (c1) -- (d1) -- cycle; 
     
     \draw[thick] (-1, -0.5) -- (1, -0.5);
     \filldraw[black] (0, -0.5) node[below] {$q$} circle (0.5pt);
     \draw[thick, red] (-1, -0.4) -- (1, -0.4);
     
     \node[left, red] at (1, -0.6) {$+$};
     \node[left, red] at (1, -0.2) {$-$};
     \draw[thick, blue] (-0.75, 0.85) parabola bend (0,-0.4) (0.75, 0.85);
     \draw[thick] (-0.75, 0.95) node[below right] {\scriptsize $W^u(O(p))$} parabola bend (0,-0.3) (0.75, 0.95);
     \draw[very thick, green] (-0.375, -0.088) parabola[bend pos = 0.5] bend (0,-0.4) (0.375, -0.088);
     \draw (-0.405, -0.101) -- (-0.355, -0.07); \draw (0.405, -0.101) -- (0.355, -0.07);
     \node[below left] at (-0.37, 0.16) {\scriptsize $F_{\mu_i}^{\tilde N}(\Gamma_j)$};
     \node at (0.55, -0.27) {\scriptsize $I_{k_1, \mu_i}$};
     \filldraw[black] (0, -0.4) circle (0.5pt);
    \end{scope}

    \node[below] at (3.8, -0.6) {$\mu = \mu_i$};
    \draw[dotted] (b) -- (a1); \draw[dotted] (c) -- (d1);

    \filldraw [black] (2,0) coordinate (q) node[below right, align = center] {$q$} circle (0.3pt);
    \filldraw [black] (0,1) coordinate (r) node[left]  {$r$} circle (0.3pt);

  \end{tikzpicture}
  \caption{Making a lower tangency.}\label{lt}
  \end{center}  
\end{figure}

Since $W^s_{loc}(p_j)$ transversally intersects $W^u_{loc}(p)$ at $a_j$, it follows from the $\lambda$-lemma that there is a sequence of disks $D_k\subset W^s(p_j), \ k\in \nn,$ such that 
\begin{itemize}\setlength\itemsep{-0.1em}
\item[1)]{the disk $D_k$ is a $F^{2k\cdot\per{p_j}}$-preimage of a small neighborhood $\hat{D}_k$ of $a_j$ in $W^s_{loc}(p_j)$ (therefore, all images of these disks under the iterates of $F$ are uniformly far from $F^{-1}(q)$);}
\item[2)]{these disks tend to the disk $D_0 = \{(x, y)\colon y=0, \|x\| \le 1 \}\subset W^s(p)$ as $C^\infty$-immersed disks;}
\item[3)]{the positive ``sides'' of the disks $D_k$ are also directed downwards.}
\end{itemize}
 
Note that 1) implies 3). Indeed, consider an oriented vertical segment $I = Oy \cap R_0$ that goes upwards. It crosses $W^s_{loc}(p_j)$ at $a_j$ from the positive side to the negative side. On one hand, the preimage $F^{-2k\cdot\per(p_j)}(I)$ is also a vertical segment that goes upwards (because $2k\cdot\per(p_j) = 2k\cdot l\cdot\per(p)$ for some $l\in\nn$). On the other hand, a curve that approaches $W^s(p_j)$ from the positive side at $a_j$ is mapped by $F^{-2k\cdot\per(p_j)}$ to a curve that does so at $F^{-2k\cdot\per(p_j)}(a_j)$. Therefore, the positive side of any disk $D_k$ looks downwards.

Likewise, there is a sequence of segments $I_k\subset F^{k\cdot\per(p)}(\Gamma_j), \ k \in \nn,$ that accumulate to some segment $I_0\ni r$ (recall that $r = (0, 1)$) while their preimages are uniformly bounded away from~$F^{-1}(q)$.

Now fix such a saddle $p_j$ with $j$ large and consider a special family $F_\mu$ described in Section~\ref{sect:gcfeds}. Recall that the maps of this family differ from $F_0 = F$ only inside a small neighborhood of $F^{-1}(q)$. If this neighborhood is small enough, $p_j$ is a periodic saddle for every $F_\mu, \ \mu\in[-\e, \e],$ and the disks $D_k$ lie in the stable manifold of this saddle. Likewise, the segments $I_k$ lie in the unstable manifold of the orbit of this saddle. Recall\footnote{See the properties of the special family in Section~\ref{sect:gcfeds}.} that when we change the parameter $\mu$, the intersection of $F^N_{\mu}(I_{k})$ with a sufficiently small neighborhood of $q$ moves upwards or downwards, whereas the disk $D_k$ stands still. Here $k$ is arbitrary. Thus, there is a sequence of parameter values $\mu_i\to 0$ (as $i\to\infty$) such that for each $\mu_i$ there exist $k_1$ and $k_2$ for which $F^N_{\mu_i}(I_{k_1}) =: I_{k_1, \mu_i}$ is tangent to $D_{k_2}$ at some point $\hat{q}$. Moreover, this is a tangency from the negative side (see Figure \ref{lt}). In other words, we have obtained a perturbation of $F$ with a lower tangency associated with $\hat{p} = p_j$.

Now we will prove the second statement of the proposition: if $p$ has a unique weakest contracting eigenvalue, then we can take $\hat p$ with the same property. Let us assume that the image of $E^{ss}_{\tilde q}$ under $dF^{-\tilde N}(\tilde q)$ is transverse to $E^{uw}_{\tilde r}$. This condition can be satisfied by a small perturbation that preserves all relevant properties of $F$, so we suppose that $F$ satisfied it from the very beginning. 

For any $\a > 0$ and $z\in R_0$ denote by $C^{ss}_\a(z)$ the $\a$-cone around $E^{ss}_z$:
$$C^{ss}_\a(z) = \{v\in T_{z}M \mid v = v_{uw} + v_{ss}, \ v_{uw}\in E^{uw}_{z}, \ v_{ss}\in E^{ss}_{z}, \ \|v_{uw}\| \le \a\|v_{ss}\| \}.$$
Consider again the point $q_j\in O(p_j)$ (defined in {\bf\small{d)}} above) and a narrow $\a$-cone $C^{ss}_\a(q_j), \ {\a \ll 1}$. Since all $q_j, \ j\in\nn,$ are close to $\tilde q$, we suppose that the differentials $dF^{-\tilde{N}}(q_j)$ are uniformly close to $dF^{-\tilde{N}}(\tilde q)$. Since $dF^{-\tilde N}(\tilde q)E^{ss}_{\tilde q}$ is transverse to  $E^{uw}_{\tilde r}$, we can also assume that for any $j$ the cone $dF^{-\tilde N}(q_j)(C^{ss}_\a(q_j))$ is contained in $C^{ss}_{\beta}(r_j)$, where $\beta$ is some large number independent of $j$. 

The map $F^{\per(p)}$ is linearized in $R_0$, so we can assume that both this map and its differential at every point $z\in R_0$ coincide with a linear map $L$ for which $E^{ss}$ is a strong stable bundle. Recall that (see {\bf\small{d)}} above) $q_j = F^{-n_j\cdot \per(p)}(r_j)$. The differential $dF^{-n_j\cdot \per(p)}(r_j)$ coincides with $L^{-n_j}$. If $j$ is large, $n_j$ is also large, and if $n_j$ is large enough, then $L^{-n_j}$ maps $C^{ss}_{\beta}(r_j)$ inside $C^{ss}_\a(q_j)$ and expands vectors in $C^{ss}_{\beta}(r_j)$ considerably, so that $C^{ss}_\a(q_j)$ becomes an expansive forward-invariant cone for $dF^{-(n_j\per(p)+\tilde{N})}(q_j)$. Then there is a unique invariant $(m-2)$-dimensional plane inside $C^{ss}_\a(q_j)$ that can only be a span of strong stable eigenvectors of~$q_j$. So, $p_j$ has a unique weakest contracting eigenvalue, at least for $j$ large, and the same is true for its continuation that has a lower tangency related to it.

\end{proof}

\bigskip

\begin{rem}\label{rem:samepoint}
If $q\in W^s(p)\cap W^u(p)$ for $F$, then one can assure that $\hat{q}\in W^u(\hat{p}, G)\cap W^s({\hat p}, G)$.

Indeed, one can argue as follows. The point ${\hat q}$ always lies in $W^s({\hat p}, G)$ by construction. 
If $q\in W^s(p, F)\cap W^u(p, F)$, then $\per(p)$ divides $N$. 
Let $N_1\ge 0$ be the remainder of division of $N$ by $\per(p_j)$. Since ${\per(p)}$ divides $\per(p_j)$ and $N$, it also divides $N_1$.
Consider the segments $I_k\subset F^{k\cdot\per(p)}(\Gamma_j)$, as described in the proof, for $k = \frac{l\cdot\per(p_j) - N_1}{\per(p)}$, where $l\in \nn$. For such $k$ we have that
$$F_\mu^{N}(I_k) \subset F_\mu^{N}\left(W^u(F^{l\cdot\per(p_j) - N_1}(p_j), F_\mu)\right) = W^u(F^{l\cdot\per(p_j) - N_1 + N}(p_j), F_\mu) = W^u(p_j, F_\mu).$$
The last equality holds because ${N_1\equiv N\pmod{\per(p_j)}}$.
If we obtain the new tangency using such $I_k$, this tangency will lie on both invariant manifolds of $\hat{p} = p_j$.
\end{rem}
\bigskip

\begin{rem}
\label{rem:intersections}
We can also assume that the map $G$ in Proposition \ref{prop:lowertan} has transverse homoclinic points that belong to the same connected components of $W^u(\hat{p})\setminus\{\hat p\}$ and $W^s({\hat p})\setminus W^{ss}(\hat p)$ as the newly obtained lower tangency. Indeed, after we fix the saddle $p_j$, whose continuation will play the role of $\hat p$, we can ensure by a small perturbation that the point $a_j\in W^s_{loc}(p_j)\cap{W^u_{loc}(p)}$ does not belong to $W^{ss}(p_j)$. Then for a large  $k_2$ the disk $D_{k_2}\subset W^s(p_j)$ will not intersect $W^{ss}(p_j)$, because $D_{k_2}$ is a distant preimage of a small neighborhood of $a_j$. But this disk definitely will intersect $\Gamma_j$ transversally at some point close to~$b_j$. This transverse intersection is preserved after the perturbation, it involves the same unstable separatrix as the new tangency, and both the new tangency and this intersection lie in the disk $D_{k_2}$ that does not intersect the strong stable manifold of $p_j$.
\end{rem}

\subsubsection*{From lower tangencies to condition (\ref{eq:pvcond})}

\begin{prop}\label{prop:goodtan}
If a diffeomorphism $F\in\Diff^\infty(M)$ has a lower quadratic tangency associated with a sectionally dissipative periodic saddle $p$, then by an arbitrarily $C^\infty$-small perturbation one can obtain a diffeomorphism $G$ with a tangency that is associated with the continuation of $p$ and satisfies condition (\ref{eq:pvcond}).
\end{prop}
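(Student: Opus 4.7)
My plan is to analyze the sign of $\det \Phi := \det(\pi \circ dF^N(r)|_{E^{uw}_r})$ on the given lower tangency orbit and arrange $\det\Phi>0$ either by moving the tangency to a later iterate on the same orbit, or, when that fails, by using the lower-tangency side together with a nearby transverse homoclinic intersection to build a new tangency orbit via a $\lambda$-lemma construction.

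First I would perform a preliminary $C^\infty$-small perturbation to put $F$ in a convenient normal form: $F^{\per(p)}$ is $C^2$-linearizable near $p$ (Sternberg, after removing resonances), $p$ has a unique weakest contracting eigendirection (either a simple real eigenvalue or a pair of complex conjugates), and the tangency at $q=F^N(r)$ remains quadratic and lower. In the resulting linearizing coordinates on $R_0$, the differential $dF^{\per(p)}$ is block-diagonal for the splitting $E^{uw}\oplus E^{ss}$, so $\pi$ commutes with $dF^{\per(p)}$ and the projected differential along the orbit factors nicely.

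The first and easy observation is that one can replace $q$ by $q'_k:=F^{k\cdot\per(p)}(q)\in W^s_{loc}(p)$, giving a new tangency orbit of length $N+k\per(p)$ that remains a quadratic tangency from below when the relevant parity of $k$ is respected (since $F^{\per(p)}$ preserves the upper/lower half-spaces of $W^s_{loc}(p)$ when $\sigma>0$, and swaps them when $\sigma<0$). Because $\pi$ commutes with $dF^{\per(p)}$, the new determinant equals $\det\Phi$ times $d^k$, where $d:=\det(dF^{\per(p)}(p)|_{E^{uw}_p})$ equals $\sigma\mu_w$ (real weak case) or $\sigma|\mu_w|^2$ (complex weak case). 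Thus whenever $d<0$, a suitable $k$ flips the sign of $\det\Phi$ while keeping the tangency lower, and condition~(\ref{eq:pvcond}) is achieved. Combined with the convention that for $\sigma<0$ any quadratic tangency is already considered lower, this handles every case except $\sigma>0$ together with either $\mu_w>0$ (real) or $\mu_w$ complex.

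In this remaining hard case, iteration along the orbit cannot change the sign of $\det\Phi$, so I would instead modify the orbit. By Remark~\ref{rem:intersections} (which I would first arrange, or which is already available from the construction of $\hat p$ in Proposition~\ref{prop:lowertan}) there is a transverse homoclinic point $\tilde q=F^{\tilde N}(\tilde r)$ on the same connected components of $W^u(p)\setminus\{p\}$ and $W^s(p)\setminus W^{ss}(p)$ as the tangency at $q$. The $\lambda$-lemma then says that arbitrary compact pieces of $W^u(p)$ are approximated in $C^1$ by forward iterates $F^{k\per(p)}\circ F^{\tilde N}$ of disks in $W^u_{loc}(p)$ through $\tilde r$. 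Using this, I would construct, by a small $C^\infty$ perturbation, a new orbit of tangency that follows a detour through a neighborhood of $\tilde q$ before being caught by the linearized dynamics around $p$ and landing tangent to $W^s_{loc}(p)$. The sheet of $W^u(p)$ used for the detour is uniquely selected by the lower-tangency side (only one of the two sides of $W^u(p)$ near $\tilde q$ is consistent with a quadratic tangency from below after the perturbation), and the orientation contributed by composing $dF^{\tilde N}(\tilde r)$ with the linearized return gives the extra sign needed to make $\det\Phi'>0$ for the new tangency.

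The main obstacle is verifying that the detoured orbit really produces a quadratic tangency of the correct type with $\det\Phi'>0$ in this hard subcase. Concretely one has to (i) choose the perturbation so that the new tangency is non-degenerate rather than a higher-order contact, (ii) check that it is still a tangency from below (or at least falls in a case where Proposition~\ref{prop:pv} still applies, together with Remark~\ref{rem:samepoint} if one needs $\hat q\in W^u(\hat p)\cap W^s(\hat p)$), and (iii) carry out the orientation computation that relates the lower-side data of the tangency to the sign of $\det\Phi'$ after the extra loop through $p$. This mirrors the orientation bookkeeping that Palis and Viana do in \cite{PV}, but now with the lower-side condition tracked through the whole construction.
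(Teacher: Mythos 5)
Your proposal splits into an ``easy'' case, where you flip the sign of $\det\Phi$ by moving the reference point $q$ forward by $k\per(p)$, and a ``hard'' case, where you attempt to alter the sign by routing the tangency orbit through a transverse homoclinic point $\tilde q$. The easy case is a correct and pleasant observation that the paper does not use (it is genuinely elementary when $d:=\det\bigl(dF^{\per(p)}(p)|_{E^{uw}_p}\bigr)<0$), although your case accounting misses the possibility $\sigma<0$, $\mu_w<0$ real, where $d>0$ as well and the trick fails.

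The genuine gap is in the hard case. Your orbit, as described, traverses $F^{\tilde N}$ once, then a power of the linearization, and (presumably) $F^{N}$ once before becoming tangent to $W^s_{loc}(p)$. The determinant of the projected differential along such an orbit factors as
$$\det\bigl(\pi\circ dF^{N}\bigr)\cdot d^{\,m}\cdot\det\bigl(\pi\circ dF^{\tilde N}\bigr),$$
and there is no a priori control of $\sign\det(\pi\circ dF^{\tilde N})$: a transverse homoclinic intersection places no constraint on this determinant, so your claim that ``the orientation contributed by composing $dF^{\tilde N}(\tilde r)$ with the linearized return gives the extra sign needed'' is unsupported. You also do not explain how the detoured orbit is forced to contain a tangency at all, only that a transverse piece of $W^u(p)$ is available near $\tilde r$.

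The paper's construction sidesteps both difficulties simultaneously. It unfolds the original tangency at $q$ inside the special family $(F_\mu)$, takes the newly created transverse branch $\Gamma_\mu$ above $W^s_{loc}(p)$, pushes it with $F_\mu^{n\per(p)}$ back near $r$, and produces a tangency with the preimage $D_\mu=F_\mu^{-N}(D)$ by a shrinking/intermediate-value argument as $\mu\downarrow 0$. Crucially, the resulting orbit of tangency from $\hat r=G^{-N}(q_0)$ to $\hat q=G^{N}(r_0)$ passes through $F^{N}$ \emph{twice}, separated by $L^{n}$ with $n$ even. The projected determinant is then
$$\sign\Delta \;=\; \sign\det\bigl(\pi\circ\Xi|_{E^{uw}}\bigr)\cdot\sign\det\bigl(\pi\circ L^n|_{E^{uw}}\bigr)\cdot\sign\det\bigl(\pi\circ\Theta|_{E^{uw}}\bigr)\;=\;(-1)\cdot(+1)\cdot(-1)\;=\;+1,$$
since both $\Xi$ and $\Theta$ are $C^1$-close to $dF^{N}(r)$ and $n$ is even, so the sign comes out positive \emph{regardless} of the sign of $d$ or of $\det\Phi$. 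To repair your argument you would have to engineer a similar doubling (or otherwise control the orientation of the transverse-orbit leg), which is precisely what your sketch omits.
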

\begin{proof}
As above, we suppose that $F^{\per(p)}$ is linearizable in $R_0\ni p$ and the linearizing coordinates and the points $r, q$ are as described in subsection ``Linearizing coordinates'' of Section \ref{sect:gcfeds}: if $q$ is not in $W^s_{loc}(p)$, replace it with $F^{2k\cdot\per(p)}(q)$ for an appropriate $k$. We also assume that $r$ belongs to the unstable separatrix of $p$ that defines the lower tangency: if the expansive eigenvalue $\s$ is positive, this is always the case; otherwise replace $q$ by $F^{\per(p)}(q)$ if necessary.  Thus we can assume that in the neighborhood of $q$ the arc of $W^u(p)$ that is tangent to $W^s(p)$ at $q$ lies in the lower half-space $\{(x, y)\colon y\le 0\}$.

If the tangency at $q$ satisfies condition (\ref{eq:pvcond}), one can take $G = F$, so in what follows we assume that this is not the case.
If $\det(\pi\circ dF^{N}(r)|_{E^{uw}_{r}}) = 0$, this determinant can be made non-zero by a small perturbation that preserves the tangency at $q$, so we can assume that it is negative.

We will obtain a new tangency using the idea of the proof of Thm~1 in \S~3.2 of~\cite{PT}. Namely, consider a special\footnote{In fact, this argument can be adapted for any one-parameter family that unfolds the tangency at $q$ generically.} one-parameter family $(F_\mu)_{\mu\in[-\e,\e]}$ described in Section~\ref{sect:gcfeds}. Let $D\ni q$ be a small disk in $W^s_{loc}(p)$  such that $D_0 := F^{-N}(D)$ lies in  $R_0$ and is $\delta$-distant from $\partial{R_0}$, while the boundary of $D_0$ is $\delta$-distant from $W^u_{loc}(p)$, $\delta$ being some small positive number. Let $I_0 = \{0\}\times[1-\e_1, 1+\e_1]$ be a small neighborhood of the point $r$ in $W^u_{loc}(p)$.  Then for small positive values of $\mu$ the arc $F_{\mu}^N(I_0)$ has two points $z_1(\mu), z_2(\mu) \in D$ of transverse intersection with $W^s_{loc}(p)$. Denote by $\Gamma_\mu$ the segment of this arc that lies above $W^s_{loc}(p)$. Note that $D_{\mu} := F^{-N}_{\mu}(D)$ transversally intersects $W^u_{loc}(p)$ at points $w_i(\mu) = F_{\mu}^{-N}(z_i(\mu)), \ i = 1, 2,$ close to $r$. If $\mu$ is sufficiently small, we can assume that $\partial{D_\mu}$ is $\delta$-distant from $W^u_{loc}(p)$ and $D_\mu$ itself is $\delta$-distant from $\partial{R_0}$. 

The $\lambda$-lemma implies that for sufficiently large even $n\in\nn$ the arc $\Gamma_{n, \mu} := F_{\mu}^{n\cdot\per(p)}(\Gamma_\mu)$ has points of transverse intersection with $D_{\mu}$. Fix some $n$ such that this holds and, moreover, $\|\lambda^n(e)\| < \delta/2$ (recall that $e$ is the $x$-coordinate of $q$). Let $\hat{z}_i(\mu) = F_\mu^{n\cdot\per(p)}(z_i(\mu)), \: {i = 1,2},$ and let $\gamma_i(\mu)$ be a connected component of $\hat{z}_i(\mu)$ in $\Gamma_{n, \mu}\cap R_0$. Note that $\gamma_1(\mu)$ coincides with $\gamma_2(\mu)$ when $\Gamma_{n, \mu}$ is contained in $R_0$.

For every point in $\Gamma_\mu$ its $x$-coordinate is close to that of the point $q = (e, 0)$, therefore the $x$-coordinate of any point that belongs to $\gamma_1(\mu)\cup \gamma_2(\mu)$ is close to $\lambda^n(e)$. In other words, $\gamma_1(\mu)\cup \gamma_2(\mu)$ (viewed as a set) is very close to $W^u_{loc}(p)$. This property is preserved when $\mu$ decreases to zero and the curve $\Gamma_{n, \mu}$ shrinks towards the point $(\lambda^n(e), 0) = F^n(q)$. At the same time $D_{\mu}$ stays in a small neighborhood of the point $r$ and far from $\partial{R_0}\cup W^s_{loc}(p)$, and $\partial{D_\mu}$ stays $\delta$-far from $W^u_{loc}(p)$. Thus when we decrease $\mu$, the curves $\gamma_1(\mu)$ and $\gamma_2(\mu)$ cannot intersect $\partial{D_\mu}$ because the latter is $\delta$-far from $W^u_{loc}(p)$, and $D_\mu$ cannot intersect $\partial(\gamma_1(\mu)\cup \gamma_2(\mu))\subset \partial{R_0}\cup W^s_{loc}(p)$. However, for sufficiently small values of $\mu$, $\gamma_1(\mu)$ coincides with $\gamma_2(\mu)$ and does not intersect $D_\mu$. Therefore, for some $\mu_n>0$ there is a point $r_0$ of tangency between one of the curves $\gamma_i(\mu_n)$ and $D_{\mu_n}$. Note that this construction allows $\mu_n$ to be taken arbitrarily close to zero and $r_0$ to be arbitrarily close to $r$. We will take $G = F_{\mu_n}$. As always, we can assume without loss of generality, that the tangency at $r_0$ for $G$ is quadratic.

\begin{figure}
  \begin{center}
  \begin{tikzpicture}[scale=0.52]

    \node [below] at (7, -6) {$\mu = \mu_n$};

    \coordinate (x) at (17.5, 0);
    \coordinate (y) at (0, 10);
    
    \draw[thick, red]   (-3, 0) -- (0,0) coordinate (p) node[below left, black] {$p$} -- (x);

    \draw[->, red] (-2.3, 0) -- (-2.2, 0);
    \draw[<-, red] (2.2, 0) -- (2.3, 0);

    \draw[->, blue] (0, -2.4) -- (0, -2.5);
    \draw[<-, blue] (-0.06, 2.5) -- (-0.055, 2.4);    
    
    \coordinate (q)    at (11.2,0);
    \coordinate (hatq) at (12.4,0);
    \coordinate (hatr) at (-0.07,4);
    \coordinate (r_0)  at (1.2, 3);
    \coordinate (q_0)  at (11.2, 0.6);

    \coordinate (right0) at (0.35, 0);
    \coordinate (left1)  at (1, 0);
    \coordinate (right1) at (1.4, 0);
    \coordinate (left2)  at (3.5, 0);
    \coordinate (right2) at (4.3, 0);
    \coordinate (left3)  at (10.3, 0);

    \draw[thick, ->] plot[smooth, tension = 1] (10.5, 1.5) {[rounded corners] to [out = 175, in = -40] (2, 4)};
    \node[above right] at (5.5, 2.) {$F_{\mu}^n$};

    \draw[thick, ->] plot[smooth, tension = 1] (2., 6.) {[rounded corners] to [out = 51, in = 87] (12, 2.5)};
    \node at (8.3, 7.7) {$F_{\mu}^N$};

    \draw[thick, blue] plot [smooth, tension = 0.65] coordinates{(0,-5) (0,0) (0,5) (0.9,7.6) (3,9.7) (6,10.81) (10,10.35) (13, 8.2) (15, 4) (15, -1.5) (14.2, -3.7) (12.8, -3.5) (12, 0)}; 

    \draw[thick, blue] plot [smooth, tension = 0.65] coordinates{(0.2,0) (0.2,4.9) (1.1,7.5) (3.2,9.6) (6,10.61) (10,10.15) (12.88, 8.08) (14.82, 3.92) (14.8, -1.3) (14.2, -3.4) (13, -3.4) (12.5, -1.5) (12.35, -0.5) (12.4, 0) (12.5, -0.22) };

    \draw[thick, blue] plot [smooth, tension = 0.65] coordinates{(0.35,0) (0.36,4.83) (1.27,7.43) (3.4,9.5) (6,10.41) (10,9.95)  (12.76, 7.9)  (14.64, 3.87) (14.6, -1.2) (14.1, -3.1) (13.1, -3.1) (12.6, -0.7)  (12.5, -0.22) (12.4, 0)};

    \draw[thick, red] plot [smooth, tension = 1.7] coordinates{(2, 2.95) (-1,4) (2, 5.05)};

    \draw[thick, blue] plot [smooth, tension = 1.5] coordinates{(left1) (r_0) (right1)};

    \draw[thick, blue] plot [smooth, tension = 1.7] coordinates{(left2) (3.9,1.3) (right2)};

    \draw[thick, blue] plot [smooth, tension = 1.5] coordinates{(10.3, 0) (11.24,0.62) (12, 0)};

    \draw[dotted, blue] plot [smooth, tension = 1] coordinates{(right0) (0.53, -4) (0.58, 1.99) (0.85, -3) (left1)};
    \draw[dotted, blue] plot [smooth, tension = 1.4] coordinates{(right1) (1.7, -2) (2.3, 1) (2.99, -1.5) (left2)};
    \draw[dotted, blue] plot [smooth, tension = 1.1] coordinates{(right2) (5.2, -1.) (6.9, 0.4) (8.9, -0.6) (left3)};

    \filldraw [black] (p) circle (2pt);
    \node[above left] at (-1, 4) {$D_{\mu}$};
    \node[above right] at (1.3, 1.2) {$\Gamma_{n,\mu}$};
    \filldraw [color = black] (q)    circle (2pt) node[below]       {$q$};
    \filldraw [color = black] (hatq) circle (2pt) node[above right] {$\hat q$};
    \filldraw [color = black] (hatr) circle (2pt) node[left]        {$\hat r$};
    \filldraw [color = black] (r_0)  circle (2pt) node[above] {$r_0$};
    \filldraw [color = black] (q_0)  circle (2pt) node[above]       {$q_0$};

    \filldraw [color = black] (left1)   circle (1.5pt);
    \filldraw [color = black] (right1)   circle (1.5pt);
    \filldraw [color = black] (2, 2.95)  circle (1.5pt);
    \filldraw [color = black] (2, 5.05)  circle (1.5pt);

  \end{tikzpicture}
  \caption{The new tangency.}\label{pic3}
  \end{center}  
\end{figure}

Let $q_0 = G^{-n\cdot\per(p)}(r_0)$, $\hat{r} = G^{-N}(q_0)$ and $\hat{q} = G^N(r_0) = G^{n\cdot\per(p) + 2N}(\hat{r})$. The point $\hat{r}$ lies in $W^u_{loc}(p)$ and is close to $r$. The points $q_0$ and $\hat{q}$ are close to $q$, and $\hat{q}\in W^s_{loc}(p)$ (see Fig.~\ref{pic3}). 
We are going to prove that for the tangency at $\hat{q}$ condition (\ref{eq:pvcond}) holds. The point $\hat{r}$ plays the same role for $\hat{q}$ that $r$ played for $q$, so condition (\ref{eq:pvcond}) takes the following form:

\begin{equation}\label{eq:pvc2}
\Delta := \det(\pi\circ dG^{n\cdot\per(p) + 2N}(\hat{r})|_{E^{uw}_{\hat r}}) > 0.
\end{equation} 

Since $dG^{n\cdot\per(p) + 2N}(\hat{r}) = dG^{N}(r_0) \circ dG^{n\cdot\per(p)}(q_0) \circ dG^{N}(\hat{r})$ we need to prove that
$$ \sign\det\left(\left.\pi\circ dG^{N}(r_0) \circ dG^{n\cdot\per(p)}(q_0) \circ dG^{N}(\hat{r})\right|_{E^{uw}_{\hat r}}\right) = 1.$$

\medskip
Recall that $dG^{n\cdot\per(p)}(q_0) = dF_{\mu_n}^{n\cdot\per(p)}(q_0) = L^{n}$. Let us introduce the following shorthand notation:
$$\Xi = dG^{N}(r_0), \ \ \Theta = dG^{N}(\hat{r}).$$
Note that both $\Xi$ and $\Theta$ are close to $dF_{0}^{N}(r)$ since $r_0$ and $\hat{r}$ are close to $r$ and $\mu_n$ is close to zero.

\begin{rem*}
Let us deal with the two-dimensional case first. If $\dim M = 2$, (\ref{eq:pvc2}) is reduced to $\det(\Xi \circ L^n \circ \Theta) > 0$. Since $\Xi \approx dF^N_0(r) \approx \Theta$, we have $\sign\det(\Xi) = \sign\det(\Theta)$. Recall that $n$ is even and therefore $\det(L^n) > 0$. Combining these two observations, we finish the proof. In the general case we implement the same idea.
\end{rem*}

Let us assume that $dF^{N}(r)(E^{uw}_{r})$ is transversal to $E^{ss}_q$: this is a generic property compatible with tangency at $q$. Since $\Theta$ is close to $dF^{N}(r)$, we suppose that $\Theta(E^{uw}_{\hat r})$ is transversal to $E^{ss}_{q_0}$. If $n$ is large, $L^n\circ\Theta(E^{uw}_{\hat r})$ is a plane $E_0$ very close to $E^{uw}_{r_0}$. If it is close enough, we have
$$
\sign\Delta = \sign\det\left(\left.\pi \circ \Xi \circ L^n \circ \Theta\right|_{E^{uw}_{\hat r}}\right) = 
\sign\det\left(\pi \circ \Xi|_{E^{uw}_{r_0}}\right) \cdot 
\sign\det\left(\pi \circ L^n \circ \Theta|_{E^{uw}_{\hat r}}\right).
$$
Since $\Xi$ is close to $dF^{N}(r)$, we have
$$
\sign\det\left(\pi \circ \Xi|_{E^{uw}_{r_0}}\right) = \sign\det\left(\left.\pi \circ dF^{N}(r)\right|_{E^{uw}_r}\right) = -1.
$$
Furthermore, since the bundle $E^{uw}$ is invariant for $L^n$, we can write
$$
\sign\det\left(\pi \circ L^n \circ \Theta|_{E^{uw}_{\hat r}}\right) =
\sign\det\left(\pi \circ L^n|_{E^{uw}_{q_0}} \circ \pi \circ \Theta|_{E^{uw}_{\hat r}}\right) =
$$
$$
=\sign\det\left(\pi \circ L^n|_{E^{uw}_{q_0}}\right) \cdot \sign\det\left(\pi \circ \Theta|_{E^{uw}_{\hat r}}\right).
$$
Finally, $\sign\det\left(\pi \circ L^n|_{E^{uw}_{q_0}}\right) = 1$ since $n$ is even, and 
$$
\sign\det\left(\pi \circ \Theta|_{E^{uw}_{\hat r}}\right) = \sign\det\left(\left.\pi \circ dF^{N}(r)\right|_{E^{uw}_r}\right) = -1.
$$  

Therefore, $\sign\Delta = -1\cdot 1\cdot (-1) = 1$, which concludes the proof.
\end{proof}

\medskip

\begin{rem}\label{rem:goodtan}
Note that for the new point of tangency we have $$\hat{q}\in {W^u(G^{2N+n\cdot\per(p)}(p))\cap W^s(p)} = W^u(G^{2N}(p))\cap W^s(p)$$ (we denote the continuation of the original saddle $p$ of $F$ by the same symbol). If $q\in W^u(p, F)\cap W^s(p, F)$, then $N$ is divisible by $\per(p)$ and, therefore, $\hat{q}\in W^u(p, G)\cap W^s(p, G)$.
\end{rem}

\subsubsection*{From tangencies to transverse intersections}

\begin{prop}\label{prop:trans}
Suppose that a diffeomorphism $F$ has a homoclinic tangency associated with a sectionally dissipative saddle $p$. Then either there are transverse homoclinic intersections that involve\footnote{See footnote~\ref{footnote:involve}.} the same connected component of $W^u(p)\setminus\{p\}$ as the tangency, or such intersections can be obtained by a small perturbation together with a new homoclinic tangency.
\end{prop}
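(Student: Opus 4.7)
The plan is to mimic the construction used in the proof of Proposition~\ref{prop:goodtan}. If $F$ already has a transverse homoclinic intersection involving the same connected component $\Gamma$ of $W^u(p)\setminus\{p\}$ as the tangency at $q$, we set $G = F$ and are done. Otherwise, I would first replace $F$ by an arbitrarily $C^\infty$-close diffeomorphism for which the tangency at $q$ is quadratic, and then embed it in the special one-parameter family $F_\mu$ constructed in Section~\ref{sect:gcfeds}, with $F_0 = F$ and the perturbation supported in a small neighborhood of $F^{-1}(q)$. For $\mu$ of the appropriate sign (say $\mu > 0$), the arc $F_\mu^N(W^u_{loc}(p))$ acquires two transverse intersection points $z_1(\mu), z_2(\mu)$ with $W^s_{loc}(p)$, both lying on the $F_\mu$-continuation $\Gamma(\mu)$ of $\Gamma$.

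Next I would run the same $\lambda$-lemma argument as in Proposition~\ref{prop:goodtan}. Pick a small disk $D$ around $q$ in $W^s_{loc}(p)$ and set $D_\mu := F_\mu^{-N}(D)$; for $\mu > 0$ small, $D_\mu$ transversally meets $W^u_{loc}(p)$ at two points $w_1(\mu), w_2(\mu)$ close to $r$. Let $\Gamma_\mu$ denote the arc of $F_\mu^N(W^u_{loc}(p))$ bounded by $z_1(\mu)$ and $z_2(\mu)$. For any sufficiently large even $n$, the $\lambda$-lemma guarantees that the curve $\Gamma_{n,\mu} := F_\mu^{n\cdot\per(p)}(\Gamma_\mu)$ is long and $C^1$-close to a fixed compact sub-disk of $W^u_{loc}(p(\mu))$ containing $r$, so that for some $\mu_0 > 0$ small, $\Gamma_{n,\mu_0}$ transversally crosses $D_{\mu_0}$ at several points near $r$.

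The main step is then to decrease $\mu$ continuously from $\mu_0$ toward $0$. The arc $\Gamma_\mu$ shrinks to the single point $q$ as $\mu \to 0$, so $\Gamma_{n,\mu}$ eventually becomes too short to reach $D_\mu$. By the same continuity and intermediate-value argument as in Proposition~\ref{prop:goodtan}, there exists $\mu_n \in (0,\mu_0)$ at which $\Gamma_{n,\mu_n}$ becomes tangent to $D_{\mu_n}$ at some point $r_0$. This $r_0$ lies in $\Gamma_{n,\mu_n} \subset W^u(p, F_{\mu_n})$ and in $D_{\mu_n} \subset W^s(p, F_{\mu_n})$, and is therefore a homoclinic tangency for $G := F_{\mu_n}$. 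Since $n$ is even, $F_{\mu_n}^{n\cdot\per(p)}$ preserves each connected component of $W^u(p(\mu_n))\setminus\{p(\mu_n)\}$, so the tangency at $r_0$ and the transverse intersections at $z_1(\mu_n), z_2(\mu_n)$ all involve the same component $\Gamma(\mu_n)$; the latter persist because $\mu_n > 0$. A final arbitrarily small perturbation makes the new tangency quadratic.

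The main obstacle I anticipate is the connected-component and parity bookkeeping: one must verify that $\Gamma_{n,\mu_n}$, $\Gamma(\mu_n)$ and $D_{\mu_n}$ are all consistent with the relevant separatrix of $W^u(p(\mu_n))$. This is handled by choosing $n$ even and, when the expansive eigenvalue is negative, invoking the convention from the footnote that both separatrices are then regarded as involved in the tangency. The other verifications---that $F_{\mu_n}$ can be taken arbitrarily $C^r$-close to $F$ by shrinking $\mu_0$, that the transverse intersections $z_i(\mu_n)$ are stable under the final perturbation, and that the tangency at $r_0$ can be made quadratic---are routine and parallel those in Proposition~\ref{prop:goodtan}.
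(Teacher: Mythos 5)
Your argument follows the scheme of Proposition~\ref{prop:goodtan}, but Proposition~\ref{prop:goodtan} is stated for a \emph{lower} tangency, and your proposal tacitly assumes the same geometry. The problem is that Proposition~\ref{prop:trans} must also handle an upper tangency (when the expanding eigenvalue $\s$ is positive), and there your choice of arc breaks the argument. Concretely: work in the linearizing coordinates near $p$, with $W^s_{loc}(p)$ the $x$-hyperplane, $W^u_{loc}(p)$ the $y$-axis, and $r=(0,1)$ on the separatrix $\Gamma$. For an upper tangency, the arc of $W^u(O(p))$ at $q$ lies in $\{y\ge 0\}$; unfolding with the sign of $\mu$ that creates transverse intersections pushes the vertex of the parabola into $\{y<0\}$, so the arc you take — the piece of $F_\mu^N(W^u_{loc}(p))$ bounded by $z_1(\mu)$ and $z_2(\mu)$ (the ``cap'') — lies entirely in $\{y\le 0\}$. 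Since $F^{\per(p)}(x,y)=(\lambda x,\s y)$ with $\s>0$ preserves the sign of $y$, every iterate $\Gamma_{n,\mu}=F_\mu^{n\cdot\per(p)}(\Gamma_\mu)$ also lies in $\{y\le 0\}$, and it converges toward the lower unstable separatrix, not toward $r=(0,1)$. The disk $D_\mu=F_\mu^{-N}(D)$ stays near $r$ in $\{y>0\}$, so $\Gamma_{n,\mu}$ never reaches $D_\mu$ and your intermediate-value step has no starting point: there is no $\mu_0$ for which a transverse crossing occurs. Your remark about choosing $n$ even and the negative-$\s$ convention removes only the $\s<0$ cases, leaving the positive-$\s$ upper tangency unaddressed.

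The paper's proof is different precisely at this point. For a tangency from below it does refer back to Proposition~\ref{prop:goodtan} as you do. For a tangency from above it (i) disposes of the easy subcase in which a second orbit of tangency exists by perturbing that orbit to a transverse intersection, and then (ii) in the remaining case — no intersections of $\bigcup_k F^k(\Gamma)$ with $W^s(p)$ outside $O(q)$ — it replaces the ``cap'' by a small arc of $W^u(O(p))$ that starts at $z_1(\mu)$ and goes \emph{up}, i.e.\ into the same half-space $\{y>0\}$ that contains $r$. That arc, under iteration, does approach $D_\mu$; the continuity argument is then run on the connected component $\gamma(\mu)$ of $F_\mu^{n\cdot\per(p)}(z_1)$ in $W^u(O(p))\cap R$ (with $R=R_0\cap\{y>0\}$), using the ``no other intersections'' hypothesis to guarantee $\gamma(0)\cap D_0=\emptyset$. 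You would need to incorporate this change of arc (and the accompanying case split) for your proof to cover the upper-tangency case; as written it establishes only the lower-tangency half of the proposition, which is already covered by Proposition~\ref{prop:goodtan}.
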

\begin{proof}
As always, we can assume that $F$ is $C^\infty$-smooth, $F^{\per(p)}$ is linearizable in the neighborhood of $p$, and the tangency is quadratic. Denote the point of tangency by $q$. 

If the tangency at $q$ is a tangency from below, we can argue as in the proof of Proposition~\ref{prop:goodtan} and obtain a new tangency and transverse homoclinic intersections as required no matter if such transverse orbits existed prior to perturbation or not.

Suppose now that we have a tangency from above at $q$. Denote by $\Gamma$ the connected component of $W^u(p)\setminus\{p\}$ involved in the tangency. Suppose that for the diffeomorphism $F$ there are no transverse intersections between $F^k(\Gamma)$ and $W^s(p)$ for any $k\in\zz$.
If there is an orbit of tangency different from that of $q$, we can perturb one of these orbits into an orbit of transverse intersection while preserving the second orbit unchanged. 

Thus, we need to consider only the case when there are no intersections between $\bigcup_k F^k(\Gamma)$ and $W^s(p)$ other than those that belong to $O(q)$. In this case the argument is analogous to the first part of the proof of Proposition~\ref{prop:goodtan}, and even simpler.

Namely, we can consider the linearizing neighborhood $R_0$ and assume that $q\in W^s_{loc}(p)\subset R_0$ and $r = F^{-N}(q)\in W^u_{loc}(p)\subset R_0$, just like in the proof of Proposition~\ref{prop:goodtan}. Consider again a small disk $D\colon q\in D\subset W^s_{loc}(p)$, and its $F^N$-preimage $D_0$ contained in a neighborhood of $r\in\Gamma$. We assume that $\partial{D_0}$ is $\delta$-far from $W^u_{loc}(p)$ and $D_0$ itself is $\delta$-far from $\partial{R_0}\cup W^s_{loc}(p)$.

Consider a special one-parameter family $(F_\mu)$ that unfolds the tangency at $q$. Let $D_{\mu} := F_{\mu}^{-N}(D)$. Since we have a tangency from above when $\mu = 0$, for small (in absolute value) $\mu < 0$ there will be two transverse intersections between $F_{\mu}^N(W^u_{loc}(p))$ and $W^s_{loc}(p)$ at the points $z_1(\mu), z_2(\mu)$ near $q$. Take some small $\mu_0<0$ and denote by $\Gamma_{\mu_0}$ a small arc in $W^u(O(p))$ that starts at $z_1(\mu_0)$ and goes up. For a sufficiently big even integer $n$ there is a transverse intersection between $F^{n\cdot\per(p)}_{\mu_0}(\Gamma_{\mu_0})$ and $D_{\mu_0}$ near~$r$.

Let $R$ be the intersection of $R_0$ with the upper half-space and $\gamma(\mu), \ \mu < 0,$ be a connected component of the point $\hat{z}_1 = F^{n\cdot\per(p)}_{\mu}(z_1)$ in $W^u(O(p))\cap R$. Then for large $n$ and small negative $\mu$ the component $\gamma(\mu)$ continuously depends on $\mu$.  Define the curve $\gamma(0)$ by continuity. If $n$ is sufficiently large, then for any $\mu\in[\mu_0, 0]$ both endpoints of $\gamma(\mu)$ lie far from $D_\mu$, while $\gamma(\mu)$ itself is at least $\delta/2$ close to $W^u_{loc}(p)$. At the same time we can assume that $\partial{D_\mu}$ is $\delta$-far from $W^u_{loc}(p)$. Recall that for $\mu = 0$ we have no intersections between $\gamma(\mu)$ and $D_\mu$, but for $\mu = \mu_0$ there is a transverse intersection. Since for $\mu\in [\mu_0, 0]$ the curve $\gamma(\mu)$ and the disk $D_\mu$ can intersect by interior points only, there is some value of $\mu$ when a point of tangency appears. Thus we have obtained both transversal intersections and a tangency as required. 
 
\end{proof}

\begin{rem}\label{rem:trans}
If for $F$ the point of tangency $q$ is not in $W^{ss}(p)$, then the same argument yields that we either have  a transversal intersection that involves the same connected components of $W^u(p)\setminus\{p\}$ and $W^s(p)\setminus W^{ss}(p)$ or can obtain such an intersection together with a new tangency that involves the same connected components (to be precise, their continuations). It suffices to notice that the new transverse or tangential intersections are constructed near the original orbit of tangency with respect to the metric on $W^s(p)$.

Furthermore, if $q\in W^u(p)\cap W^s(p)$, then this intersection also is in $W^u(p)\cap W^s(p)$ (since $\per(p)$ divides~$N$).
\end{rem}

\subsubsection*{Tangencies for invariant manifolds of the same saddle}

\begin{prop}\label{prop:goodtan2}
If a diffeomorphism $F$ has a lower quadratic tangency between $W^s(p)$ and $W^u(F^N(p))$, where $p$ is a sectionally dissipative periodic saddle, then by an arbitrarily $C^\infty$-small perturbation one can obtain a diffeomorphism $G$ with a tangency between the stable and the unstable manifolds of the continuation of $p$.
\end{prop}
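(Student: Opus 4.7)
The plan is to reduce Proposition~\ref{prop:goodtan2} to Proposition~\ref{prop:dense} applied to the iterate $F^{\per(p)}$. Since $p$ is a fixed saddle of $F^{\per(p)}$, a ``homoclinic tangency associated with $p$'' in the $F^{\per(p)}$-sense (per the paper's definition) is precisely a tangency between $W^u(p)$ and $W^s(p)$, which is the goal. Both $p$ and $\tilde p := F^N(p)$ are fixed by $F^{\per(p)}$, and the given tangency $q \in W^s(p) \cap W^u(\tilde p)$ is a tangency between the invariant manifolds of two of these fixed saddles.

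To apply Proposition~\ref{prop:dense} (with target saddle $p$, tangency at $q$, and the two points $p_1 := \tilde p$, $p_2 := p$), I need a hyperbolic basic set $\Lambda'$ of $F^{\per(p)}$ containing both $p$ and $\tilde p$. This amounts to showing that $p$ and $\tilde p$ are heteroclinically related as fixed saddles of $F^{\per(p)}$: transverse intersections both in $W^u(\tilde p) \cap W^s(p)$ and in $W^u(p) \cap W^s(\tilde p)$.

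The first direction follows by applying Proposition~\ref{prop:trans} (after an arbitrarily small $C^\infty$-perturbation of $F$) to obtain a transverse homoclinic point $\eta_0 \in W^u(p) \cap W^s(p)$ with return time divisible by $\per(p)$, alongside a (possibly new) lower tangency of the same form $W^s(p) \cap W^u(F^N(p))$, and then unfolding this tangency in the special family of Section~\ref{sect:gcfeds} to produce transverse intersections $\xi \in W^u(\tilde p) \cap W^s(p)$. For the reverse direction, I would use the $\lambda$-lemma applied to $\xi \in W^u(\tilde p)$: small disks $D \subset W^s(p)$ transverse to $W^u(\tilde p)$ at $\xi$ have backward iterates $F^{-n\per(p)}(D) \subset W^s(p)$ (since $F^{\per(p)}$ preserves $W^s(p)$) that $C^1$-accumulate on compact subsets of $W^s(\tilde p)$. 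For $n$ large, such a piece $F^{-n\per(p)}(D)$ is a stretched-out subset of $W^s(p)$ meeting $W^u(p)$ transversely at many points of the orbit of $\eta_0$; by openness of transverse intersection in the $C^1$ topology of the intersecting manifolds, these transverse intersections persist when passing from $F^{-n\per(p)}(D)$ to the nearby $C^1$-close piece of $W^s(\tilde p)$, producing transverse intersections of $W^u(p)$ with $W^s(\tilde p)$.

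With both heteroclinic directions in hand, a standard horseshoe argument yields the basic set $\Lambda'$ for $F^{\per(p)}$ containing $p$ and $\tilde p$, and Proposition~\ref{prop:dense} delivers the desired tangency between $W^u(p)$ and $W^s(p)$. The main obstacle is the reverse heteroclinic direction $W^u(p) \cap W^s(\tilde p)$: while the given tangency at $q$ immediately yields $W^u(\tilde p) \cap W^s(p)$ transverse intersections upon unfolding, the opposite direction is not provided by the tangency orbit and must be constructed via the $\lambda$-lemma-based $C^1$-accumulation combined with persistence of transverse intersections, as above. A secondary technicality is realizing the perturbation produced by Proposition~\ref{prop:dense} for $F^{\per(p)}$ as a genuine $C^\infty$-small perturbation of $F$ itself, which is possible because the construction is localized in a neighborhood of a single point avoiding its iterates under $F$.
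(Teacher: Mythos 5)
Your overall plan (build a basic set of $F^{\per(p)}$ containing both $p$ and $\tilde p = F^N(p)$, then invoke Proposition~\ref{prop:dense}) would work, but the crucial step --- establishing the reverse heteroclinic direction $W^u(p)\pitchfork W^s(\tilde p)\neq\emptyset$ --- is where your argument breaks down. You take a small disk $D\subset W^s(p)$ through $\xi\in W^u(\tilde p)\pitchfork W^s(p)$ and claim that for large $n$ the backward iterates $F^{-n\per(p)}(D)$ meet $W^u(p)$ transversely along the orbit of $\eta_0$. But since $F^{-n\per(p)}$ preserves $W^u(p)$, we have
\[
F^{-n\per(p)}(D)\cap W^u(p)=F^{-n\per(p)}\bigl(D\cap W^u(p)\bigr),
\]
so $F^{-n\per(p)}(D)$ meets $W^u(p)$ if and only if $D$ already does. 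A small disk $D$ around $\xi\in W^u(\tilde p)$ has no reason to meet $W^u(p)$: that would require $W^u(p)$ to accumulate near $\xi$, which is exactly the ``reverse direction'' you are trying to prove. The $\lambda$-lemma tells you that $F^{-n\per(p)}(D)$ converges in $C^1$ to compact pieces of $W^s(\tilde p)$; it says nothing about those pieces meeting $W^u(p)$ unless $W^s(\tilde p)$ already does, which is again the sought-after conclusion. So the argument as written is circular, and the subsequent invocation of ``openness of transverse intersection'' has nothing to transfer.

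The resource you are missing is already present in the unfolding that produced $\xi$: if $\xi\in W^u(\tilde p)\pitchfork W^s(p)$, then applying $F^{-N}$ yields $F^{-N}(\xi)\in W^u(p)\pitchfork W^s(F^{-N}(p))$. This gives a transverse connection from $p$ \emph{forward} to a (different) point $F^{-N}(p)$ of the orbit. Now iterate: by the $\lambda$-lemma, $W^u(p)$ accumulates on $W^u(F^{-N}(p))$, which transversally meets $W^s(F^{-2N}(p))$ at $F^{-2N}(\xi)$, so $W^u(p)\pitchfork W^s(F^{-2N}(p))\neq\emptyset$; inductively, $W^u(p)\pitchfork W^s(F^{-kN}(p))\neq\emptyset$ for all $k\geq 1$, and choosing $k$ with $-kN\equiv N\pmod{\per(p)}$ (e.g.\ $k=\per(p)-1$) gives $W^u(p)\pitchfork W^s(\tilde p)\neq\emptyset$. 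This is exactly the mechanism the paper's proof uses: it tracks the transverse intersections $w_i(\mu_n)\in W^u(p)\cap W^s(G^{-N}(p))$ created during the unfolding in Proposition~\ref{prop:goodtan}, iterates the $\lambda$-lemma with $G^{-N}$, and lands on $W^u(G^{2N}(p))$, which is tangent to $W^s(p)$ at $\hat q$; a final small perturbation then yields the desired tangency of $W^u(p)$ with $W^s(p)$ directly, without going through Proposition~\ref{prop:dense}. Your detour via a basic set and Proposition~\ref{prop:dense} is a legitimate alternative finish once both heteroclinic directions are in hand, but the reverse direction must be supplied by this iteration; the $\eta_0$-based argument you give does not do the job.
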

\begin{proof}
Consider a diffeomorphism $G$ and a point $\hat{q}$ as in the proof of Proposition~\ref{prop:goodtan}.
Note that for $G$ we have transversal intersections between $W^u(p)$ and $W^s(G^{-N}(p))$ (at the points $w_i(\mu_n)$ mentioned in the proof). The $\lambda$-lemma implies that $W^u(p)$ accumulates on $W^u(G^{-N}(p))$ and, therefore, transversally intersects also $G^{-N}(W^s(G^{-N}(p))) = W^s(G^{-2N}(p))$. Arguing inductively, we obtain that for any $k\in\nn$ the unstable manifold $W^u(p)$ transversally intersects $W^s(G^{-kN}(p))$ and accumulates on $W^u(G^{-kN}(p))$ . Take $k = 3\per(p) - 2 > 0$. This yields that $W^u(p)$ transversally intersects $W^s(G^{2N}(p))$ and accumulates on $W^u(G^{2N}(p))$. But $W^u(G^{2N}(p))$ is tangent to $W^s(p)$ at $\hat{q}$. Therefore, we can obtain a tangency between $W^u(p)$ and $W^s(p)$ by another small perturbation arguing as in the proof of Proposition~\ref{prop:lowertan}. 
\end{proof}

\begin{prop}\label{prop:goodtan3}
Suppose that a diffeomorphism $F$ has a sectionally dissipative periodic saddle $p$ and there is a quadratic tangency between $W^s(p)$ and $W^u(F^N(p))$ at the point $q$.  Then by an arbitrarily $C^\infty$-small perturbation one can obtain a diffeomorphism $G$ with a tangency between the stable and the unstable manifolds of the continuation of $p$.
\end{prop}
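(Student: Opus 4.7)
I would split the proof according to whether the quadratic tangency at $q$ is a tangency from below or from above. In the lower case the hypothesis of Proposition~\ref{prop:goodtan2} is met and the desired conclusion follows directly. It therefore remains to treat the upper case, and the strategy is to reduce it to the lower case by passing through an auxiliary saddle heteroclinically related to the continuation of $p$.

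In the upper case I would first apply Proposition~\ref{prop:trans} (and Remark~\ref{rem:trans}) to ensure, possibly after a small perturbation, the existence of a transverse homoclinic orbit associated with $p$ that involves the same connected component of $W^u(p)\setminus\{p\}$ as the tangency. With this transverse orbit available, the hypotheses of Proposition~\ref{prop:lowertan} are satisfied, and another small $C^\infty$-perturbation produces a diffeomorphism with a lower quadratic homoclinic tangency associated with a sectionally dissipative saddle $\hat p$ heteroclinically related to the continuation of $p$. Proposition~\ref{prop:goodtan2} applied to $\hat p$ in place of $p$ then yields a tangency between $W^s(\hat p)$ and $W^u(\hat p)$ at the same point.

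The final step is to transfer this tangency from $\hat p$ back to $p$. Since $\hat p$ and $p$ are heteroclinically related, the inclination lemma implies that $W^u(O(p))$ accumulates on $W^u(O(\hat p))$ and $W^s(O(p))$ accumulates on $W^s(O(\hat p))$, so in a small neighborhood of the tangency point of $W^s(\hat p)$ and $W^u(\hat p)$ there are disks in $W^s(O(p))$ and arcs in $W^u(O(p))$ that $C^1$-approximate the corresponding tangent pieces. A perturbation of exactly the same style as the one that concludes the proof of Proposition~\ref{prop:lowertan} then produces an actual tangency between two such pieces, which is a homoclinic tangency for $p$.

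The main obstacle is to guarantee that this last tangency is between $W^s(p)$ and $W^u(p)$ at the \emph{same} point of the orbit rather than between manifolds of different iterates. I expect to handle this by exploiting the flexibility in the approximation: one can choose the transverse intersections used in the $\lambda$-lemma argument at iterates of $\hat p$ and $p$ whose indices are divisible by $\per(p)$, so that the approximating disks and arcs lie in $W^s(p)$ and $W^u(p)$ themselves; alternatively, if the tangency obtained happens to be from below, a final invocation of Proposition~\ref{prop:goodtan2} produces the required tangency at the same point without a circular reference to Proposition~\ref{prop:goodtan3}.
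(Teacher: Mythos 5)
Your proposal is correct and follows essentially the same route as the paper: handle the lower-tangency case by a direct appeal to Proposition~\ref{prop:goodtan2}, and in the upper case use Proposition~\ref{prop:trans} (with Remark~\ref{rem:trans}) to supply the transverse homoclinic orbit, apply Proposition~\ref{prop:lowertan} to pass to a heteroclinically related saddle $\hat p$ with a lower tangency, apply Proposition~\ref{prop:goodtan2} to $\hat p$, and finally transfer the tangency back to $p$ via the inclination-lemma accumulation and a small perturbation as in the proof of Proposition~\ref{prop:lowertan}. Your resolution of the ``same iterate'' issue is also the one the paper uses: because $\per(p)\mid\per(\hat p)$ (as arranged in the proof of Proposition~\ref{prop:lowertan}), the transverse intersections $W^u(p)\pitchfork W^s(\hat p)$ and $W^s(p)\pitchfork W^u(\hat p)$ let the $\lambda$-lemma produce arcs of $W^u(p)$ and disks of $W^s(p)$ themselves (not merely of $W^u(O(p))$ and $W^s(O(p))$) accumulating on $W^u(\hat p)$ and $W^s(\hat p)$ near the new tangency point.
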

\begin{proof}
Since the case of a lower tangency was already considered in the previous proposition, we assume that there is a tangency from above at $q$.  
Without loss of generality we can also suppose that $q\notin W^{ss}(p)$ and the saddle $p$ has transverse homoclinic orbits that involve the same connected components of $W^u(p)\setminus\{p\}$ and $W^s(p)\setminus W^{ss}(p)$ as the orbit of tangency (see Proposition~\ref{prop:trans} and Remark~\ref{rem:trans}).  

Since the tangency at $q$ is a tangency from above, Proposition \ref{prop:lowertan} yields that we can switch to another saddle heteroclinically related to $p$ and obtain, by a small perturbation of $F$, a lower tangency associated with the continuation of this new saddle. Let us denote the perturbed map by $\hat{F}$ and the new saddle by $\hat{p}$ and preserve the notation $p$ for the continuation of the original saddle. Note that we can take $\hat{p}$ such that $\per(\hat{p}) = l\cdot\per(p)$ (for some $l\in\mathbb N$), $W^u(p, \hat{F})\pitchfork W^s(\hat{p}, \hat{F})\ne\emptyset$, and $W^s(p, \hat{F})\pitchfork W^u(\hat{p}, \hat{F})\ne\emptyset$, as in the proof of Proposition~\ref{prop:lowertan}. 

Applying Proposition~\ref{prop:goodtan2} to $\hat{F}$, we obtain a diffeomorphism $\hat{G}$ such that there is a tangency between the stable and the unstable manifolds of the continuation of $\hat{p}$. Again, let us preserve the notation $p, \hat{p}$ for the continuations of our saddles.

Since $W^u(p, \hat{G})\pitchfork W^s(\hat{p}, \hat{G})\ne\emptyset$ and $\per(p) \mid \per(\hat{p})$, we have that $W^u(p, \hat{G})$ accumulates on $W^u(\hat{p}, \hat{G})$. Analogously, since $W^s(p, \hat{G})\pitchfork W^u(\hat{p}, \hat{G})\ne\emptyset$, we have that $W^s(p, \hat{G})$ accumulates on $W^s(\hat{p}, \hat{G})$. Then, arguing as in the proof of Proposition~\ref{prop:lowertan}, we can make a small perturbation and obtain a diffeomorphism $G$ with a tangency between the stable and the unstable manifolds of the continuation of $p$, which finishes the proof. 

\end{proof}

\subsubsection*{End of the proof}

Now when we proved the auxiliary propositions, we can get back to the proof of the capture lemma.
Recall that we assumed $F\in \Diff^\infty(M)$ to have a sectionally dissipative saddle $p$ with a quadratic homoclinic tangency between $W^s(p)$ and $W^u(F^{N}(p))$ at the point $q$. Our goal is to match all assumptions of Proposition~\ref{prop:pv} and obtain an extremely dissipative saddle heteroclinically related to the continuation of $p$ and having an orbit of tangency. However, we assume that Proposition~\ref{prop:pv} is not applicable to $F$ yet. 

We are going to perform a series of perturbations, each of them arbitrarily $C^\infty$-small, and obtain in the end a diffeomorphism, a saddle,  and a tangency for which Proposition~\ref{prop:pv} is applicable. In order to simplify the notation, we will repeatedly replace $F$ with the perturbed maps. At each step we also replace our saddle $p$ either by its hyperbolic continuation or by another saddle heteroclinically related to this continuation, and we obtain a new point of tangency, which we continue to denote by $q$ though it is a different point associated with the new saddle $p$.

{\bf 1.} First, by Proposition~\ref{prop:goodtan3} we can obtain, by a small perturbation, a new tangency between the stable and the unstable manifolds of the continuation of $p$. Thus, we replace the diffeomorphism $F$ by the perturbed map and preserve the original notation for the continuation of $p$ and for the new point of tangency. If we already had a tangency between $W^s(p)$ and $W^u(p)$, this step is redundant.

{\bf 2.} Now we assume that $F$ has a quadratic tangency between $W^u(p)$ and $W^s(p)$ at $q$, the saddle $p$ being, as always, sectionally dissipative.
In Section 5 of \cite{PV} J.~Palis and M.~Viana prove that in this case a small perturbation yields another sectionally dissipative saddle $\tilde{p}$ that has a homoclinic tangency associated with it, has a unique weakest contracting eigenvalue and, moreover, is heteroclinically related to the original one. 

\medskip
There might be a small gap in the argument of J.~Palis and M.~Viana, but it is easy to fix it.
In order to prove this result they first impose some genericity condition\footnote{They suppose that the map $\Phi$ (defined before condition~(\ref{eq:pvcond})) is an isomorphism.} on $F$ and then consider a one-parameter family $(F_\mu), \ F_0 = F,$ that generically unfolds the tangency. Their argument suggests, however, that for $\mu > 0$ the tangency is unfolded with creation of transverse intersections and that there is also a sequence $(\mu_j)_{j\in \nn}, \ \mu_j > 0, \ \mu_j\to 0$ as $j\to\infty$, such that the maps $F_{\mu_j}$ have new tangencies associated with the continuation of $p$. But it may happen that this is not the case: it is possible that the new tangencies appear only for negative $\mu$.
However, this is always the case for a lower tangency, as one can see from the first part of the proof of Proposition \ref{prop:goodtan}.
Thus, one can use Proposition \ref{prop:lowertan} (with Remark~\ref{rem:samepoint}) to switch to a heteroclinically related saddle with a lower tangency first and then argue as J.~Palis and M.~Viana do.
\medskip

As before, we replace $F$ by the perturbed map and $p$ by $\tilde{p}$ without changing the notation.

{\bf 3.} Now we assume that the saddle $p$ has a unique weakest contracting eigenvalue and there is\footnote{We might need another perturbation to move the tangency off $W^{ss}(p)$ and make it non-degenerate.} a quadratic homoclinic tangency at $q\in W^u(p)\cap (W^s(p)\setminus W^{ss}(p))$. Using Proposition~\ref{prop:trans} with Remark~\ref{rem:trans}, we can also assume that there is a transverse homoclinic intersection between the same connected components of $W^u(p)\setminus\{p\}$ and $W^s(p)\setminus W^{ss}(p)$. 

If the tangency at $q$ is a tangency from above, we can apply Proposition \ref{prop:lowertan} (with Remark~\ref{rem:samepoint}) to obtain a new tangency from below between $W^u(p)$ and $W^s(p)\setminus W^{ss}(p)$. In this case we replace the map, the saddle, and the tangency by the new ones again. By Remark~\ref{rem:intersections}, we can assume that there still are transversal homoclinic orbits that involve the same connected components of $W^u(p)\setminus\{p\}$ and $W^s(p)\setminus W^{ss}(p)$ as the tangency. If we have accidentally lost linearizability of $F^{\per(p)}$ in the neighborhood of $p$ (actually, we could not), we can restore it by a small perturbation that preserves all relevant properties of our map.

{\bf 4.} Thus, now we assume that the tangency at $q$ is a tangency from below. If this tangency does not satisfy condition~(\ref{eq:pvcond}), then apply Proposition~\ref{prop:goodtan} with Remark~\ref{rem:goodtan} to obtain, after a small perturbation, a new tangency that satisfies condition~(\ref{eq:pvcond}). We can assume that the linearizability is preserved. Replace the map and the tangency by the new ones. The new tangency belongs to the same connected components of $W^u(p)\setminus\{p\}$ and $W^s(p)\setminus W^{ss}(p)$ as before (recall that in the proof of Prop.~\ref{prop:goodtan} $\hat{q}$ is close to $q$), therefore there are still transversal homoclinic intersections as in condition c) of Proposition~\ref{prop:pv}. Then Proposition~\ref{prop:pv} can be applied to our new $F$.

{\bf 5.} Proposition~\ref{prop:pv} yields that after another perturbation we finally obtain  an extremely dissipative saddle heteroclinically related to the continuation of the saddle $p$. Using Proposition~\ref{prop:renormsaddletan}, we can also suppose that for this new map there is a homoclinic tangency associated with this extremely dissipative saddle.
\medskip

At each step of our argument we could replace saddle $p$ either by its continuation or by a heteroclinically related saddle only, so this final extremely dissipative saddle is heteroclinically related to the continuation of the original saddle $p$ that existed prior to any perturbation.
We can apply the capture lemma to the tangency associated with the extremely dissipative saddle and conclude, as above, that after a proper perturbation the unstable manifolds of the continuations of both the extremely dissipative saddle and the original one intersect a basin of a sink. Now the proof of the capture lemma is complete.


\section{Dominated splitting or instability}\label{dichotomy}

\subsection{Statement and plan of the proof}

C. Bonatti, L. J. D{\'i}az and E. R. Pujals prove in \cite[Cor. 0.3]{BDP}  the following dichotomy for a $C^1$-generic diffeomorphism of a closed manifold: for each periodic hyperbolic saddle its homoclinic class either admits a dominated splitting or is contained  in the closure of an infinite set of sinks or sources. In this section by a relatively simple argument also based on \cite{BDP} we will deduce Theorem~\ref{thm:instability} from a technical statement that underlies the result we have just quoted. Let us recall the statement of Theorem~\ref{thm:instability} for convenience.

\begin{repthm2}
For a Baire-generic diffeomorphism $F\in\Diff^1(M)$, $M$ being a closed manifold, either any homoclinic class of $F$ admits some dominated splitting, or the Milnor attractor is Lyapunov unstable for $F$ or $F^{-1}.$
\end{repthm2}

Note that the two cases considered in Theorem~\ref{thm:instability} are not mutually exclusive. Indeed, one can take locally generic diffeomorphisms with unstable attractors from the previous sections, multiply them by a strong contraction, and thus obtain a dominated splitting whereas the attractors will still be unstable. What Theorem \ref{thm:instability} really says is that the absence of a dominated splitting over some homoclinic class generically implies instability of the Milnor attractor (perhaps, for the inverse map).

We will need the following fact that, though not stated explicitly, is proved in \cite{BDP}.

\begin{thm}[\cite{BDP}, Lemma 1.9 + Lemma 1.10 + Prop. 2.1]\label{BDP_p1}
Suppose that $p$ is a periodic hyperbolic saddle of the diffeomorphism $F\in\Diff^1(M)$ and the homoclinic class $H(p,F)$ does not admit a dominated splitting. Then, for any sufficiently small $\e>0,$ in any neighbourhood of $p$ one can find a periodic saddle $q$ with the following properties: 
\begin{itemize}\setlength\itemsep{-0.1em}
\item[--]{ $q$ is heteroclinically related to $p$,}
\item[--]{ there are linear maps $A_j$ $\e$-close to the differentials $dF(F^{j-1}(q)), \ j = 1,\dots, {\rm per \,}(q),$  such that the composition $A = A_{{\rm per \,}(q)}\circ\dots\circ A_1$ is either a contraction or a dilation.\footnote{In this context a contraction (dilation) is meant to be a linear map with moduli of eigenvalues less than~1 (greater than~1), and we do not assume that the norm of this map is necessarily less than~1 (resp., greater than~1).  Then, since $\e$ is arbitrary, we don't actually need to specify which norm we use to define the $\e$-perturbation of $dF$. By default, we will assume the operator norm that corresponds to the vector norm provided by the Riemannian structure.}}
\end{itemize}
\end{thm}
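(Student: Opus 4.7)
The plan is to combine density of periodic saddles in the homoclinic class, a quantitative translation of ``no dominated splitting on $H(p,F)$'' into ``arbitrarily weakly dominated cocycles along periodic orbits in $H(p,F)$'', and finally a purely linear-algebraic perturbation lemma for products of invertible maps that collapses the spectrum via arbitrarily small factor-wise perturbations.

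\textbf{Step 1: finding a suitable saddle.} Periodic saddles heteroclinically related to $p$ are dense in $H(p,F)$ by definition, and the $\lambda$-lemma ensures such saddles accumulate on $p$ itself. The key quantitative input is the following: if for some fixed constants $(C,N)$ every periodic orbit in $H(p,F)$ lying in a prescribed neighborhood of $p$ admitted a $(C,N)$-dominated splitting of some fixed index $i$, this splitting would extend by upper semicontinuity and uniqueness of dominated splittings to all of $H(p,F)$, contradicting the hypothesis. Hence, given any thresholds $(C_\e, N_\e)$ to be specified in Step 2, one can find a periodic saddle $q$ heteroclinically related to $p$, arbitrarily close to $p$, whose derivative cocycle along $O(q)$ admits no $(C_\e, N_\e)$-dominated splitting of any index $i = 1, \dots, \dim M - 1$.

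\textbf{Step 2: the linear algebra core.} The key purely linear statement is: for each $\e > 0$ and $d \ge 2$ there exist $(C_\e, N_\e)$ such that, for any finite product $B_n \circ \dots \circ B_1$ of invertible linear maps on $\rr^d$ which fails $(C_\e, N_\e)$-dominated splitting of every index, one can find $A_j$ with $\|A_j - B_j\| < \e$ such that $A := A_n \circ \dots \circ A_1$ has all eigenvalues of the same modulus. The argument equalizes moduli index by index: inserting small rotations at finitely many of the factors $B_j$, one forces $|\mu_i| = |\mu_{i+1}|$ one pair at a time, starting from the top of the spectrum. Failure of $(C_\e, N_\e)$-domination of index $i$ means exactly that the splitting between the $i$ strongest and the $d-i$ weakest directions is so ``fragile'' that rotations of norm $<\e$ inserted at a few well-chosen steps suffice to flip the ratio $|\mu_i|/|\mu_{i+1}|$ through $1$. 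A final multiplicative perturbation of a single factor by $1\pm\eta$ (still $<\e$) then displaces the common modulus $\rho$ off $1$, making $A$ either a contraction ($\rho < 1$) or a dilation ($\rho > 1$).

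\textbf{Main obstacle.} The delicate part of Step 2 is that collapsing the spectrum at index $i$ may spoil the equalization already achieved at other indices. This is handled by fixing an order (for instance, collapsing the strongest pair first and descending), and arranging each rotation to act on a subspace nearly invariant under the previously adjusted composition, so that its effect on the remaining singular values is controllably small and can be absorbed at later stages. A related subtlety is the uniformity: the thresholds $(C_\e, N_\e)$ must be chosen before Step 1, since they determine how long the period of $q$ must be to provide enough ``room'' for the successive small rotations, while the perturbation bound $\e$ is the one prescribed at the outset. Once Step 2 is available, applying it to the cocycle $B_j = dF(F^{j-1}(q))$ produced in Step 1 yields the required $A_j$ and completes the proof.
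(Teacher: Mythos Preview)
The paper does not contain a proof of this theorem: it is quoted as a fact proved in \cite{BDP} (specifically, the combination of Lemma~1.9, Lemma~1.10, and Proposition~2.1 there), and is used as a black box in Section~\ref{dichotomy}. So there is no ``paper's own proof'' to compare your proposal against.

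That said, your sketch does follow the architecture of the argument in \cite{BDP}. Step~1 corresponds to their Lemma~1.9 (reduction from the homoclinic class to a single periodic orbit with weak domination), and Step~2 corresponds to Lemma~1.10 plus Proposition~2.1 (the linear-algebraic perturbation that equalizes the spectral radii). Your description of the main obstacle---that equalizing one pair of eigenvalues may disturb the others, and that one must arrange the successive rotations carefully to keep control---is accurate and is indeed where most of the work lies. One point you leave vague is exactly how the ``few well-chosen steps'' are located: in \cite{BDP} this uses a pigeonhole-type argument on the growth rates along the orbit to find times where the relevant directions are close enough for a small rotation to swap them. Your sketch is correct at the level of strategy but would need substantial fleshing out at Step~2 to be a proof; in any case, the present paper simply invokes the result.
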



\begin{rem}  \label{rem:sinks}
Actually, it follows from Remarks 5.5 and 5.6 in \cite{BDP} that if $H(p,F)$ contains a dissipative saddle $p_1$ heteroclinically related to $p$ then one can take $q$ such that the perturbation of the differentials along $O(q)$ yields a contraction. Respectively, if there is an area-expanding $p_2$, one may assume that $A$ is a dilation. 
\end{rem}

Recall that according to Franks' lemma \cite[Lemma 1.1]{Franks} an $\e$-perturbation of the differential $dF$ over a finite subset $B\subset M$ can be realized by a diffeomorphism $G$ that is $10\e$-close to $F$ in $C^1$ and coincides with $F$ on $B$ and outside some neighborhood of $B$ that may be chosen to be arbitrary small. Therefore Theorem~\ref{BDP_p1} combined with Franks' lemma may be viewed as a means of creating sinks near the (continuation of the) saddle $p$.

In order to prove Theorem \ref{thm:instability} we will first deduce from Theorem \ref{BDP_p1} an analogue of the capture lemma: if there is a saddle $q\in H(p, F)$ that can be turned into a sink by a small perturbation, then there is another saddle $Q\in H(p, F)$ that not only becomes a sink after an appropriate perturbation, but also catches a part of the unstable manifold of the continuation of $p$ into its basin of attraction. Since the proof of this lemma is a little technical, it is presented in a separate subsection.

Then a localized version of Theorem~\ref{thm:instability} can be proved similarly to Theorem~\ref{thm:NI}, the only difference is that we should obtain new sinks with the help of Theorem~\ref{BDP_p1} and Franks' lemma and use the new capture lemma instead of the old one.

The global version of Theorem~\ref{thm:instability} is obtained from the local version by an argument of the Kupka-Smale type similar to the proof of Cor. 0.3 in~\cite{BDP}.

\subsection{Another capture lemma}

\begin{lem}[another capture lemma]\label{lem:capture2}
For any $\e_0>0$ there exists $\e<\e_0$ such that the following holds.
Suppose that $H(p,F)$ does not admit any dominated splitting and the point $q$ provided by Theorem~\ref{BDP_p1} for a given $\e$ yields a contraction. Then by an $\e_0$-perturbation of $F$ in $\Diff^1(M)$ we can obtain a diffeomorphism $G$ such that the point $p$ is a hyperbolic saddle\footnote{and it is a hyperbolic continuation of the saddle $p$ of $F$.} for $G$ and $W^u(p,G)$ intersects the attraction basin of some sink. 
\end{lem}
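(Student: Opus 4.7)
The approach is to apply Franks' lemma at $O(q)$ to turn $q$ into a periodic sink for the perturbed diffeomorphism, and then to exploit the heteroclinic relation between $p$ and $q$ to ensure that $W^u(p,G)$ enters the attraction basin of this new sink. This is analogous in spirit to Lemma~\ref{lem:capture}, except that the sink is now created by a Franks-type perturbation of differentials rather than by unfolding a tangency through renormalization.

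Concretely, the plan has four steps. (1) Using the heteroclinic relation provided by Theorem~\ref{BDP_p1}, fix a point $z_0\in W^u(p,F)\cap W^s(q,F)$ (iterating along the orbits of $p$ and $q$ if needed so that the chosen representatives give a direct intersection), and observe that $F^{k\,\per(q)}(z_0)\to q$; by further restricting $k$ so that $k\,\per(q)$ is a multiple of $\per(p)$, the relevant iterates remain in $W^u(p,F)$. (2) Apply Franks' lemma to replace $dF$ along $O(q)$ by the maps $A_j$ from Theorem~\ref{BDP_p1}, obtaining a diffeomorphism $G$ which is $10\e$-close to $F$ in $C^1$, coincides with $F$ outside an arbitrarily small neighborhood $V$ of $O(q)$, and satisfies $dG^{\per(q)}(q)=A$; choose $V$ disjoint from $p$ so that $p$ has a hyperbolic continuation for $G$. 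Since $A$ is a contraction, $q$ becomes a hyperbolic periodic sink for $G$, and its attraction basin contains some ball $B(q,\delta)$ with $\delta>0$. (3) Choose an iterate $z_n:=F^n(z_0)\in W^u(p,F)\cap B(q,\delta/4)$. (4) By continuity of compact pieces of the unstable manifold under $C^1$-small perturbations, applied to a compact piece of $W^u(p,F)$ containing $z_n$, for $\e$ small enough there is a point $\tilde z\in W^u(p,G)$ with $\dist(\tilde z,z_n)<\delta/4$. Then $\tilde z\in B(q,\delta/2)$, which lies in the attraction basin of $q$ under $G$, completing the proof.

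The main obstacle is the interdependence of the parameters: $q$, $\per(q)$, $A$, and hence the basin radius $\delta$ all depend on the choice of $\e$; once these are fixed, the iterate count $n$ needed to bring $z_n$ into $B(q,\delta/4)$ depends on $\delta$ and on $z_0$; and the $C^1$-closeness required in step~(4) grows with $n$, controlled by a Lipschitz factor like $\Lip(F)^n$. The resolution is that $\e$ can be chosen as small as desired in advance: a hyperbolic sink always has a basin of definite positive radius, the count $n$ is determined once the basin is fixed, and the Franks bound $10\e$ can be made as small as the continuity constants demand by shrinking $\e$. Thus the constraint $10\e\le\e_0$, together with the continuity requirements in step~(4), is jointly satisfiable by taking $\e$ sufficiently small at the outset.
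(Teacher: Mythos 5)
There is a genuine gap, and it is precisely the obstacle that the paper's ``Idea of the proof'' section warns about: when one simply applies Franks' lemma in a small neighborhood of $O(q)$, one cannot ensure that a given heteroclinic point ends up in the basin of the new sink. Your resolution paragraph correctly names the circular dependence of parameters, but asserting joint satisfiability does not establish it, and in fact the quantities run the wrong way. Concretely: the basin of $q$ under $G$ that you can guarantee is only a ball of radius $\delta$ comparable to $r/L^{\per(q)}$, where $r$ is the radius of the balls in the Franks support $V$ and $L$ bounds the per-iterate expansion of the $A_j$'s (one must keep the orbit of a nearby point inside $V$ for a full period before the contraction $A$ is seen). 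The number $n$ needed so that $z_n\in B(q,\delta/4)$ therefore grows like $\log(1/\delta)\sim\per(q)\log L$, and the continuity modulus for the compact piece of $W^u(p,\cdot)$ through $z_n$ deteriorates like $L^n$. So your step (4) requires roughly $10\e\,L^n<\delta/4$, i.e.\ $\e\,L^{2\per(q)}\lesssim r$. But in the setting of Theorem~\ref{BDP_p1}, making a non-dominated saddle into a contraction by $\e$-perturbations of the differential generally forces $\per(q)$ to scale like $1/\e$; then $\e\,L^{2\per(q)}\to\infty$ as $\e\to0$, and the constraint is not ``jointly satisfiable by taking $\e$ sufficiently small''---it gets worse. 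Shrinking $r$ does not help either, since that only shrinks $\delta$ and increases $n$.

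The paper avoids this by a genuinely different device: instead of perturbing at $O(q)$ itself, it constructs (via a Markov partition for a basic set containing $q$) a heteroclinically related saddle $Q$ whose orbit makes $k=k_1+k_2$ winds $\delta$-close to $O(q)$ before closing up, and then performs the Franks-type perturbation \emph{only along the last $k_2$ winds}. The heteroclinic point $x\in W^s_{\mathrm{loc},r}(Q)\cap W^u(p)$ is left undisturbed for the first $k_1$ winds, during which the unperturbed hyperbolicity of $\Lambda$ contracts it so close to $O(Q)$ that inequality (\ref{ineq:ratio}) holds; after that, the perturbed linear maps are applied and the orbit of $x$ is guaranteed to remain in the small balls for the rest of the period. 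Choosing $k_2\gg k_1$ then makes $Q$ a sink and captures $x$. This two-phase structure---``wait, then perturb''---is exactly what decouples the basin size from the length of the heteroclinic approach and is the missing idea in your proposal.
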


\subsubsection*{Idea of the proof}
Consider the saddle $q$ given by Theorem~\ref{BDP_p1}. Since it is heteroclinically related to $p$, there is a transverse heteroclinic orbit $O(z)$ that accumulates to $O(p)$ in the past and to $O(q)$ in the future.
When we perturb the map in the neighborhood of $O(q)$ in order to turn $q$ into a sink, we cannot a priori be sure that $z$ will end up in the basin of this sink. It may happen, informally speaking, that the orbit of $z$ will leave the neighborhood where the map was perturbed just after a few iterations and long before $O(z)$ feels any attraction towards the new sink.

In order to circumvent this we will find another saddle $Q$ heteroclinically related to $p$ and such that the orbit of $Q$ makes $k$ winds close to the orbit of $q$, $k$ being arbitrarily large, and then closes after a few iterations. Denote by $x$ a heteroclinic point that goes from $O(p)$ to $O(Q)$. We will perform the perturbation in the neighborhood of $O(Q)$ in the following way. First, as in Franks lemma, the orbit of $Q$ itself will not be perturbed. Second, during the first $k_1$ winds there will be no perturbation, and we will let the images of $x$ approach the orbit of $Q$ as they do for the unperturbed map $F$. We need these images to come so close to $O(Q)$ that when we finally perturb the map in the neighborhood of the $(k_1+1)$-th wind, the corresponding points of the orbit of $x$ would stay in this neighborhood during the whole wind. Since the wind is close to $O(q)$, the perturbation can be so chosen that the image of $x$ is actually attracted to $O(Q)$ during the wind. Then we can repeat the same procedure during the rest of the winds. 

We will make sure that the number of these winds $k_2 = k - k_1$ is very large, much larger then $k_1$. Then $Q$ will become a sink no matter that there was no perturbation done during the first $k_1$ winds and there would be no perturbation during those few iterations that the orbit of $Q$ may spend far from the orbit of $q$. Moreover, if $k_2$ is large enough, the future orbit of $x$ will be attracted to this sink, which will conclude the proof, because $x\in W^u(O(p))$. 

\subsubsection*{Saddle $Q$}

Since $q\in H(p,F)$, there are transverse homoclinic intersections associated with $q$. Then there is also a non-trivial basic set $\Lambda\subset H(p,F)=H(q,F)$ such that $q\in\Lambda$ (see, for example, \cite[Thm 6.5.5]{Katok}). For a fixed $\delta>0$ and any $k\in\nn$, there is a periodic saddle $Q\in\Lambda$ with the following property: the orbit of $Q$ makes at least $k$ winds $\delta$-close to the orbit of $q$, and then closes after a number of iterations limited by a fixed integer that does not depend on~$k$. 

Indeed, any basic set admits Markov partitions of arbitrarily small diameter (see \cite[$\S 18.7$]{Katok}). Take a Markov partition of diameter less than $\delta$ for $\Lambda$ and consider the corresponding\footnote{See \cite[Thm 18.7.4.]{Katok}.} transitive Markov chain $(\Sigma_A, \sigma_A)$ that is semi-conjugated to the dynamics on $\Lambda$. Take a finite word $w$ defined by which rectangles of partition are consecutively visited by the orbit of $q$, then consider its $k$-th power (under concatenation) $[w]^k$ and take another finite word $w_0$ such that, first, $[w]^kw_0$ is not a power of any word and, second, our Markov chain admits a transition between the last letter of $w_0$ and the first letter of $w$. Note that $w_0$ can be taken independent of $k$, at least if we assume that there is a rectangle of the partition not visited by $O(q)$.  Then the periodic sequence $\omega\in \Sigma_A$ defined by the word $[w]^kw_0$ exists, and it is mapped to the required periodic point $Q$ by the semi-conjugacy map. 

Let us state explicitly that when we speak about the ``winds'' of the orbit $O(Q)$ around the orbit $O(q)$, we assume that $Q$ is $\delta$-close to $q$ and the same is true for $F^j(Q)$ and $F^j(q)$ for $j=1,2,\dots,kn$, where $n$ is the period of $q$. Each wind is a subset of $O(Q)$ that consists of $n$ consecutive points, namely the points $F^{n(j-1) + i}(Q), \; i = 0,\dots, n-1$, for the $j$-th wind. We will denote the period of $Q$ by $N$. Obviously, $N>kn$. 

Observe that by taking $\delta$ small enough we can ensure that the differentials at the points of $O(Q)$ that belong to the winds are $\e$-close to differentials at the corresponding points of $O(q)$. Therefore the composition of differentials along any wind can be turned into a contraction by $2\e$-perturbations of these differentials. Since the number of iterations that $O(Q)$ spends far from $O(q)$ is limited whereas $k$ may be taken arbitrarily large, we conclude that the composition of differentials along the whole $O(Q)$ may be turned into a linear contraction as well.

From now on we assume for simplicity, that for each point of the orbit $O(Q)$ we fix local coordinates with the origin at this point and whenever we consider $F$ restricted to a small neighborhood of the orbit $O(Q)$, we use these local coordinates. Then we may informally speak, for instance, about a linear mapping from the vicinity of $Q$ to the vicinity of $F(Q)$, or even about the mapping coinciding or $C^1$-close to $dF(Q)$. A formal way to say the same thing would be to consider an exponential mapping (given by the Riemannian metric) and then consider $\exp\circ\, dF(Q)\circ\exp^{-1}$. Moreover, we will switch to the euclidean metric and vector norm given by our fixed coordinates. When we change the vector norm, $\e$-perturbations of the differentials become $C\e$-perturbations for some positive constant $C$, so let us redefine $\e$ so that we do not need to write this constant every time.

The points $p$ and $Q$ are heteroclinically related (since this relation is transitive), therefore $W^u(O(p))$ transversally intersects $W^s(O(Q))$ at some point $x$. Replacing if necessary the points $p,Q,x$ by their images under some iteration of $F$ we can find a number $r<\min(\delta,\e)$ such that for the $r$-neighborhood $W$ of $O(Q)$ the following holds:

\begin{itemize}\setlength\itemsep{-0.1em}
\item{$W$ is a union of disjoint balls $B_j, \; j =1,\dots,N$ of radius $r$ centered at the points of $O(Q)$;}
\item{in each ball $B_j$ the map $F$  is $\e/10$-close in $C^1$-topology to the linear map that corresponds to the differential at the center of the ball, i.e., to $dF(F^{j-1}(Q))$;}
\item{$x\in W$, but the past semi-orbit of $x$ does not intersect $W$;}
\item{$x\in W^s_{loc, r}(Q)\cap W^u(p)$.}

\end{itemize}
In what follows we do not perturb $F$ outside $W$, therefore the point $x$ stays at the unstable manifold of the saddle $p.$
We will perform the perturbation of $F$ in $W$ in two steps.

\subsubsection*{Step 1: no perturbation during the first $k_1$ winds}
 
During the first step that consists of $k_1$ winds we do not perform any perturbations. The number $k_1$ should be chosen so large that the following inequality holds:
\begin{equation}\label{ineq:ratio}
\frac{\dist\left(F^{nk_1}(x), F^{nk_1}(Q)\right)}{\dist(x, Q)} < \frac{1}{10(L+1)^{n-1}},
\end{equation}
where $L$ is the Lipschitz constant for $F$.\footnote{Note that we can not argue like that in the case of an arbitrary saddle. Imagine a situation when there is a weak repulsion during nearly the whole periodic orbit and at the very end a few decisive iterations make it a saddle.} The distance between the images of $x$ and $Q$ decreases because $Q\in\Lambda$ and $x\in W^s_{loc, r}(Q)$. Indeed, it follows from the definition of a hyperbolic set that there are constants $c>0$ and $\lambda<1$ such that for any $z\in\Lambda$ and any $y\in W^s_{loc, r}(z)$, if $r$ is small enough, the following holds: 

$$\forall j\in\nn,\;\;\dist(F^j(y), F^j(z)) \le c\lambda^j\cdot\dist(y,z).$$

This inequality implies that (\ref{ineq:ratio}) holds for large values of $k_1$. 

Denote $x_1 = F^{nk_1}(x)$ and $Q_1 = F^{nk_1}(Q)$. Since $\dist(x, Q) \le r$, inequality~(\ref{ineq:ratio}) implies that $\dist(x_1, Q_1) < r/(10(L+1)^{n-1})$. 

\subsubsection*{Step 2: actual perturbation}

Now the second step begins. It consists of $k_2=k-k_1$ winds. 

Theorem \ref{BDP_p1} provides $n$ linear maps $A_1,\dots, A_n$ such that these are $\e$-perturbations of the differential along the orbit of $q$ and the composition $A = A_n\circ\cdots\circ A_1$ is a contraction. Let us assume for simplicity that $A$ contracts the euclidean norm. In the general case this is true for some power of $A$ and the argument should be modified accordingly. 

Consider the $(k_1+1)$-th wind of $O(Q)$ around $O(q)$ that starts at $Q_1$ and continues up to $Q_n := F^{n-1}(Q_1)$.  
Recall that, since $Q_1$ is $\delta$-close to $F^{nk_1}(q)=q$, we assume that the map $A_1$ is a $2\e$-perturbation of the differential at $Q_1$, and, analogously, each $A_j$ is a $2\e$-perturbation of the differential at the corresponding point of the wind.

Take a ball $B(Q_1, \frac{r}{10})$ of radius $\frac{r}{10}$ centered at $Q_1$ and a ten times larger ball $B(Q_1, r)$ with the same center  (this larger ball actually coincides with the previously defined ball $B_j$ with $j = nk_1+1$). 
We can modify $F$ inside $B(Q_1, r)$ in such a way that inside $B(Q_1, \frac{r}{10})$ the resulting map would coincide with the map $A_1$. This can be made by a $(c_1\cdot 2\e)$-perturbation, where $c_1\ge 1$ depends on the radii of the two balls, but not on $\e$.\footnote{Actually, making the radii small and their ratio large, we could take $c_1$ arbitrarily close to~1.} 

Analogously, for any $j\in\{2,\dots, n\}$ we can take balls of radii $\frac{r}{10}$ and $r$ centered at $Q_j := F^{j-1}(Q_1)$ and modify $F$ inside the bigger ball so that the restriction of the new map to the small ball coincide with $A_j$. Since for different $j$ the big balls do not intersect, it still takes merely a $(c_1\cdot 2\e)$-perturbation to perform the overall modification. 

We will preserve the notation $F$ for the modified map. It is important that after this modification the point $x$, in general, does not belong to the stable manifold of $Q$. However, at the end of the wind the corresponding iterate of $x$ is still inside the ball $B(Q_n, r)$.

Indeed, recall that we have denoted by $d$ the distance between $Q_1$ and $x_1\in O(x)$.  Obviously, the distance between $x_2 = F(x_1)$ and $Q_2=F(Q_1)$ is less than $d\cdot (L+1)$, where $L$ is the Lipschitz constant for the original $F$: we assume that $\e$ is small and add 1 to $L$ in order to take the perturbation into account. Likewise, we have for $x_j = F^{j-1}(x_1)$ that $\dist(x_j, Q_j) < d\cdot (L+1)^{j-1}$.
Since, as we required during the first step, $d\cdot 10\cdot (L+1)^{n-1} < r$, we conclude that during this wind the points of $O(x)$ stay inside the union of smaller balls where the original map was replaced by the maps $A_1,\dots, A_n$.
 
Recall that we assume the composition $A = A_n\circ\cdots\circ A_1$ to be a contraction map in our euclidean metric. Denoting by $\lambda_1$ the minimal rate of this contraction we obtain 
$$\dist(F^{n}(x_1),F^{n}(Q_1)) \le \lambda_1\cdot d < d.$$
This means that we can repeat the same modification procedure for the next wind and further on up to the end of the $k$-th wind.

\subsubsection*{Attraction to the sink}

 If $k_2$ is large enough, after Step 2 the point $Q$ becomes a sink and $x$ is in its basin of attraction. Indeed, let us show that if $k_2$ is sufficiently large, then any point $y$ that is $d$-close to $Q_1$ (in particular, the point $x_1\in O(x)$) is attracted to this sink. Recall that $N = {\rm per \,} (Q) = (k_1+k_2)n + r_0$, where $r_0 = \mathrm{length}(w_0)$ is smaller than some constant whereas $k_1$ and $k_2$ may be chosen arbitrary large, and we have already chosen $k_1$. We have 
$$\dist{(F^{N}(y),Q_1)} = \dist{(F^{N}(y),F^N(Q_1))}\le \left(\lambda_1^{k_2}\cdot(L+1)^{k_1n+r_0}\right)\cdot\dist{(y, Q_1)}.$$
For a large $k_2$ the inequality $\lambda_1^{k_2}\cdot(L+1)^{k_1n+r_0} < \frac{1}{2}$ holds. Then we have 
$$\dist{(F^{N}(y),Q_1)} \le \frac{1}{2}\dist{(y, Q_1)}.$$ 
This shows that $x_1$ and its preimage $x$ are both attracted to the sink. 

Thus after a $2c_1\e$-modification of the initial map inside the domain $W$ we see that:
\begin{itemize}\setlength\itemsep{-0.1em}
\item{the past semi-orbit of $x$ is unchanged, and consequently $x$ is still in $W^u(p)$;}
\item{the orbit of $Q$ is unchanged, but $Q$ becomes a sink, and $x$ is in its basin of attraction.}
\end{itemize}
If $\e$ is small enough, we have $2c_1\e < \e_0$. Then we have obtained an $\e_0$-modification of the initial $F$ with the required property. This completes the proof of the lemma. 

\subsection{Local version of Theorem~\ref{thm:instability}}

The following statement is a localized version of Theorem \ref{thm:instability}.

\begin{thm}  \label{prop:localized}
Suppose that $F\in\Diff^1(M)$ has a neighborhood $U$ where the hyperbolic continuation of a periodic saddle $p$ of $F$ is defined, and, moreover, for any $G\in U$ this saddle is dissipative.
Then for a Baire-generic $G\in U$ either $H(p(G),G)$ admits a dominated splitting, or $A_M(G)$ is Lyapunov unstable. 
\end{thm}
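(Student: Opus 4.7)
The plan is to replicate the Newhouse-style Baire-category argument of Section~\ref{sec:thmNI}, replacing the unfolding of tangencies by Theorem~\ref{BDP_p1} combined with Franks' lemma, and replacing the old capture lemma by Lemma~\ref{lem:capture2}. Let $\mathcal{D}\subset U$ be the set of $G\in U$ for which $H(p(G),G)$ admits a dominated splitting, and let $\mathcal{D}^\circ$ denote its interior in $U$. The first alternative of the theorem is automatic on $\mathcal{D}^\circ$, so it is enough to produce the second alternative generically in the complement. For each $n\in\nn$ set
$$U_n=\{G\in U:\, G \text{ has a sink at distance less than } 1/n \text{ from } p(G)\},$$
and let $C=\{G\in U:\, W^u(p(G),G) \text{ intersects the attraction basin of some sink of }G\}$. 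Both are open by the persistence of hyperbolic sinks and the continuous dependence of local unstable manifolds on the map.

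The crux is to show that $U_n\cup\mathcal{D}^\circ$ and $C\cup\mathcal{D}^\circ$ are dense in $U$. Fix any nonempty open $V\subset U$. If $V\cap\mathcal{D}^\circ\ne\emptyset$ we are done; otherwise $V$ is open with $V\cap\mathcal{D}^\circ=\emptyset$, which forces $V\not\subset\mathcal{D}$ (since $V$ is open and any open subset of $\mathcal{D}$ would lie in $\mathcal{D}^\circ$), so there exists $G_0\in V$ with $H(p(G_0),G_0)$ admitting no dominated splitting. Because $p(G_0)$ is dissipative by hypothesis and trivially lies in $H(p(G_0),G_0)$, Theorem~\ref{BDP_p1} together with Remark~\ref{rem:sinks} produces, for any prescribed $\e>0$, a periodic saddle $q$ in any prescribed neighborhood of $p(G_0)$ and linear maps $A_1,\dots,A_{\per(q)}$ that $\e$-approximate $dG_0$ along $O(q)$ whose composition $A$ is a contraction. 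Franks' lemma then supplies a $10\e$-perturbation $G_1$ of $G_0$, supported in an arbitrarily small neighborhood of $O(q)$ (hence disjoint from $O(p(G_0))$), for which $q$ becomes a hyperbolic sink. Choosing $\e$ small enough keeps $G_1\in V$ and places the new sink at distance less than $1/n$ from $p(G_1)$, so $G_1\in V\cap U_n$. The density of $C\cup\mathcal{D}^\circ$ is proved in exactly the same way, invoking Lemma~\ref{lem:capture2} in place of Franks' lemma to produce a perturbation in $V\cap C$.

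Consequently $R=\bigcap_n(U_n\cup\mathcal{D}^\circ)\cap(C\cup\mathcal{D}^\circ)$ is a countable intersection of open dense subsets of $U$ and hence residual. Any $G\in R$ either lies in $\mathcal{D}^\circ\subset\mathcal{D}$, giving the first alternative, or lies in $\bigl(\bigcap_n U_n\bigr)\cap C$, and then $G$ has a sequence of sinks accumulating to $p(G)$ and its unstable manifold $W^u(p(G),G)$ meets the basin of some sink, so Proposition~\ref{prop:suff} yields the Lyapunov instability of $A_M(G)$. The main obstacle will be the density argument of the second paragraph: one must verify (i) that the saddle $q$ from Theorem~\ref{BDP_p1} can indeed be placed in a prescribed $1/n$-neighborhood of $p(G_0)$ (the content of the ``in any neighbourhood of $p$'' clause), and (ii) that the free parameter $\e$ of Theorem~\ref{BDP_p1} and of Lemma~\ref{lem:capture2} can be chosen, depending on the $C^1$-distance from $G_0$ to $\partial V$, so small that the resulting Franks or capture perturbation stays inside $V$.
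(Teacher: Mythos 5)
Your proof is correct and follows essentially the same strategy as the paper: Baire-category argument with $U_n$ and $C$, sinks created via Theorem~\ref{BDP_p1} plus Franks' lemma (and Remark~\ref{rem:sinks} to ensure contractions from the dissipativity of $p$), the capture realized by Lemma~\ref{lem:capture2}, and Proposition~\ref{prop:suff} at the end. The only cosmetic difference is the packaging of the dominated-splitting locus: the paper works on $V = U\setminus\mathrm{Cl}(\mathrm{Int}\,DS(U))$ and unions the resulting residual set with $DS(U)$ at the end, whereas you absorb $\mathcal{D}^\circ$ into each open dense set $U_n\cup\mathcal{D}^\circ$, $C\cup\mathcal{D}^\circ$ from the start — these are interchangeable. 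Your two flagged ``obstacles'' are not genuine gaps: (i) is exactly the ``in any neighbourhood of $p$'' clause of Theorem~\ref{BDP_p1}, and (ii) is the standard openness argument both proofs rely on.
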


\begin{proof}
We will show, for a generic diffeomorphism $G\in U$, that if $H(p(G),G)$ does not admit a dominated splitting, then $G$ satisfies both assumptions of Proposition \ref{prop:suff}, i.e., there is a sequence of sinks accumulating to $p(G)$ and the unstable manifold of $p(G)$ intersects the basin of some sink. Then Proposition \ref{prop:suff} will imply that the Milnor attractor $A_M(G)$ is unstable.
 
Denote by $DS(U)$ the subset of $U$ that consists of diffeomorphisms $G$ for which $H(p(G),G)$ admits a dominated splitting.
Consider the interior of $DS(U)$ and denote by $V$ the complement of the closure of this interior in $U$: $V = U\setminus {\rm Cl}({\rm Int} DS(U))$. It follows from the definition that $V$ contains a dense subset of diffeomorphisms $G$ for which $H(p(G),G)$ does not admit any dominated splitting. If $V$ is empty, we are done: the homoclinic class $H(p(G),G)$ admits a dominated splitting for a topologically generic $G\in U$.

If $V\ne\emptyset$, Theorem \ref{BDP_p1} (combined with Franks' lemma) implies that any diffeomorphism $G\in V$ can be approximated by a diffeomorphism with a sink or a source $s$ close to the continuation of $p$. Since we assume that $p(G)$ is dissipative, by Remark \ref{rem:sinks} we can also assume that $s$ is a sink.

Then we can use the Newhouse argument as in the proof of Theorem \ref{thm:NI} (see Section \ref{sec:thmNI}). The only difference is that new sinks are obtained not by unfolding homoclinic tangencies but with the help of Theorem \ref{BDP_p1} as above. This argument yields that generically in $V$ sinks accumulate to the hyperbolic continuation of $p$. 

Further note that Lemma \ref{lem:capture2} can be applied to any diffeomorphism $G\in V$ for which $H(p(G),G)$ does not admit a dominated splitting, and such diffeomorphisms are dense in $V$. Then there is an open and dense subset of $V$ where for any diffeomorphism $G$ the unstable manifold $W^u(p(G), G)$ intersects a basin of a sink. Thus, there is a residual subset $R$ of $V$ where both assumptions of Proposition \ref{prop:suff} are satisfied and therefore $A_M$ is unstable.

The observation that the union $R\cup DS(U)$ is a residual subset of $U$ completes the proof.   
\end{proof}

\subsection{Global version of Theorem~\ref{thm:instability}}

Theorem \ref{thm:instability} may be proved now by essentially the same argument as Cor. 0.3. in~\cite{BDP}.

\begin{proof}[Proof of Theorem \ref{thm:instability}]
 Diffeomorphisms for which all periodic points of period less than $n$ are hyperbolic form an open and dense subset $U_n$ of $\Diff^1(M)$. Let us split $U_n$ into the union of open subsets $U_{n,\a}$ such that for $F\in U_{n,\a}$ the number of saddles of period less than $n$ is constant and equal to $k(\a)$, and these saddles vary continuously with the map. Take some $U_{n,\a}$ and denote those saddles by $p_1,\dots,p_k$. 

For each $j$ consider a set $DS(p_j)\subset U_{n,\a}$, where $H(p_j(G), G)$ admits a dominated splitting. Then fix $j$ and consider an open set $V_j = U_{n,\a}\setminus {\rm Cl}({\rm Int\,} DS(p_j))$. Denote by  $V^+_j$ (resp. $V^-_j$) an open subset of $V_j$ that consists of diffeomorphisms for which $p_j$ is dissipative (resp. area-expanding). The union $V^+_j\cup V^-_j$ is dense in $V_j$. Application of Theorem \ref{prop:localized} to $V^+_j$ yields a residual subset $R^+_j\subset V^+_j$ where diffeomorphisms have unstable Milnor attractors. An analogous argument for $V^-_j$ in the inversed time provides a residual $R^-_j\subset V^-_j$ such that for each $F$ in this set the inverse map $F^{-1}$ has an unstable Milnor attractor. Then $R_j=R^-_j\cup R^+_j$ is residual in $V_j$. The union of $R_j$ and $DS(p_j)$ is a residual subset of $U_{n,\a}$. Intersecting these $R_j\cup DS(p_j)$ we obtain a residual subset $R_{n,\a}$ of $U_{n,\a}$. For any $F\in R_{n,\a}$ either homoclinic classes of all saddles $p_j$ admit some dominated splittings or the Milnor attractor of $F$ or $F^{-1}$ is unstable. Finally, $R=\bigcap\limits_{n}\bigcup\limits_\a R_{n,\a}$ is a global residual subset such that for any $F\in R$ either every homoclinic class admits a dominated splitting, or $A_M(F)$ is unstable for $F$, or $A_M(F^{-1})$ is unstable for~$F^{-1}.$   
\end{proof}

\vspace{0.9cm}

Ivan Shilin,

Moscow State University

\vspace{0.1cm}

E-mail: i.s.shilin@yandex.ru

\end{document}